\newcommand{\eps}{\varepsilon}
\newcommand{\lam}{\lambda}
\newcommand{\R}{\mathbb{R}}
\newcommand{\Z}{\mathbb{Z}}
\newcommand{\calA}{\mathcal A}
\newcommand{\calC}{\mathcal C}
\newcommand{\calAm}{(\calA, m)}
\newcommand{\calAtilde}{\widetilde \calA}
\newcommand{\DeltaA}{\Delta_{\calA}}
\newcommand{\DeltaAm}{\Delta_{(\calA,m)}}
\newcommand{\D}{\mathcal D_G}
\newcommand{\Dzero}{\mathcal D_{G,0}}
\newcommand{\Dodd}{\mathcal D_{G,\text{odd}}}
\newcommand{\Dl}{\mathcal D_{G,{\ell}}}  
\DeclareMathOperator{\fix}{Fix}
\DeclareMathOperator{\pstab}{pStab}
\DeclareMathOperator{\aut}{Aut}
\DeclareMathOperator{\dom}{dom}
\newcommand{\Rall}{V^n}
\definecolor{mycolor}{RGB}{10, 100, 20}
\newcommand{\note}[1]{{\color{mycolor}{\bf Note:} #1}}%
\theoremstyle{plain} 
\newtheorem{thm}{Theorem}[section]
\newtheorem{lem}[thm]{Lemma}
\newtheorem{prop}[thm]{Proposition}
\theoremstyle{definition}
\newtheorem{definition}[thm]{Definition}
\newtheorem{example}[thm]{Example}
\newtheorem{conj}[thm]{Conjecture}
\theoremstyle{remark}
\newtheorem{rem}[thm]{Remark}
\begin{document}

\thanks{\today}

\title[Synchrony and Anti-Synchrony]
{Synchrony and Anti-Synchrony for Difference-Coupled Vector Fields on Graph Network Systems}

\author{John M. Neuberger}
\author{N\'andor Sieben}
\author{James W. Swift}

\email{
John.Neuberger@nau.edu,
Nandor.Sieben@nau.edu,
Jim.Swift@nau.edu}

\address{
Department of Mathematics and Statistics,
Northern Arizona University PO Box 5717,
Flagstaff, AZ 86011-5717, USA
}

\subjclass[2000]{34C15, 34C23, 34A34, 37C80}
\keywords{coupled systems, synchrony, bifurcation}

\begin{abstract}
We define a graph network to be a coupled cell network where there are only one type of cell and one type of symmetric coupling between the cells.
For a difference-coupled vector field on a graph network system, all the cells have the same internal dynamics, and the coupling between
cells is identical, symmetric, and depends only on the difference of the states of the interacting cells.
We define four nested sets of difference-coupled vector fields by adding further restrictions on the internal dynamics and the coupling 
functions. These restrictions require that these functions preserve zero or are odd or linear. 
We characterize the synchrony and anti-synchrony subspaces with respect to these four subsets of admissible vector fields. 
Synchrony and anti-synchrony subspaces are determined by partitions and matched partitions of the cells that satisfy
certain balance conditions. We compute the lattice of synchrony and anti-synchrony subspaces for
some examples of graph networks.
We also apply our theory to systems of coupled van der Pol oscillators.
\end{abstract}


\maketitle

\tolerance=10000


\section{Introduction}

Coupled cell networks are an important object of study with diverse applications and an extensive literature.
Stewart, Golubitsky and Pivato \cite{Pivato} introduced the concept of \emph{balanced} equivalence relations to study coupled cell networks.  Their work showed that synchrony subspaces arise from 
balanced equivalence relations, demonstrating that robustly invariant subspaces exist in coupled cell networks beyond those
forced by symmetry alone.  Many papers followed extending this work, notably \cite{Dias2004, Dias2005, FieldComb, G&Sgroupoid, G&Sreview, Torok}.
Coupled cell networks have also been studied in the physics literature, where \emph{cluster synchronization} is the term used to describe the dynamics on the synchrony subspace \cite{PecoraClusterSynch, SchaubClusterSynch, SorrentinoClusterSynch}.

The internal symmetry of the cells is of course important, provided the coupling respects the symmetry \cite{Dionne2, Dionne1}. 
For example, odd cell dynamics can lead to anti-synchrony as well as synchrony, wherein some cells are
$180^{\circ}$ out of phase with others.  This has interested physicists especially as it applies to control of chaotic oscillators \cite{KimAntiSynch}.


In this paper we study a special type of coupled cell network that we call a \emph{graph network}. 
A graph network is homogeneous, so there are only one type of cell and one type of coupling between the cells. 
The coupling between a pair of cells is symmetric, so the network connections are determined by a connected simple graph. 
In a \emph{graph network system} the state of a cell is described by an element of $\R^k$.


We consider \emph {difference-coupled} vector fields that are admissible on our graph networks. In these vector fields
all the cells have the same internal dynamics, and the coupling between
cells is identical, symmetric, and depends only on the difference of the states.
The coupling function is evaluated at the difference between the states of cells joined by an edge in the graph.
Difference-coupled vector fields are found in many coupled cell systems modeling natural phenomena.
This special functional form of the coupling is what differentiates our work from \cite{Torok,Pivato} and the research that followed.


We define three strictly nested subsets of difference-coupled vector fields.
In an \emph{exo-difference-coupled} vector field the coupling function preserves zero.
The condition means that two cells in an identical state do not influence each other even if there is an edge between them.  
In an \emph{odd-difference-coupled} vector field the internal dynamics and coupling functions are both odd.
An odd coupling function means that the influence of one cell to another is the negative of the reverse influence. 
In a \emph{linear-difference-coupled} vector field the internal dynamics function is odd and the coupling function is a linear operator.


Our main goal is to characterize the subspaces of the total phase space of a graph network system that are invariant under every vector field in one of our four collections of difference-coupled vector fields. 
These invariant subspaces exhibit synchrony or anti-synchrony of the cells in the network. This means that certain cells are either in the same state, or their states are the negatives of each other. 
The significance of invariant subspaces come from the fact that invariant subspaces are also dynamically invariant. This means that if a network dynamical system is in a state of synchrony or anti-synchrony, then it remains in that state.
Synchrony and anti-synchrony spaces correspond to partitions and matched partitions of the network cells. Cells in synchrony are in the same equivalence class, and the equivalence classes that contain cells in anti-synchrony are matched.  


Our main Theorem~\ref{MainTheorem} characterizes the invariant subspaces with respect to our four subsets of admissible vector fields as synchrony or anti-synchrony spaces corresponding to certain type of partitions of the network cells. The partitions we need are balanced, exo-balanced, odd-balanced, or linear-balanced.
Balanced partitions are called equitable partitions in the physics literature, for example \cite{SchaubClusterSynch}. Similarly, exo-balanced partitions
are called external equitable partitions in
\cite{AguiarDiasWeighted, SchaubClusterSynch}.
To our knowledge, the definitions of odd-balanced and linear-balanced partitions are new. The signed equitable external partitions of \cite{SchaubClusterSynch} are different from ours.

The dynamics on an invariant synchrony subspace or anti-synchrony subspace is described
by an easily constructed reduced system on the lower dimensional invariant subspace.
This makes reduced systems a useful computational tool for efficiently finding solutions to partial difference equations (PdE, \cite{NSS3}) with given local symmetry.
The reduced system should correspond to some sort of quotient cell network system. We do not fully understand the nature of this quotient network but it is not difference-coupled.
Our situation is similar to that of \cite{Pivato} where the quotient cell network is not the same type of network as the original. We hope that our work can be generalized to resolve this issue as it was resolved in \cite{Torok}.


Requiring invariance under a smaller set of vector fields allows for more invariant subspaces. The invariant subspaces form a lattice under reverse inclusion. This lattice is an extension of the lattice of fixed-point subspaces, isomorphic to the the lattice of isotropy subgroups. The subspaces of this extension exhibit a richer structure of local symmetries. 


Finally, our results are applied to systems of coupled generalized van der Pol oscillators. The systems show synchrony and anti-synchrony that would not be expected based on symmetry alone. By the choice of the parameters we find dynamical systems with difference coupled vector fields that are in any of our four nested subsets.


Our main motivation is to understand the local symmetry structure of anomalous invariant subspaces of \cite{NSS3}, 
an essential first step in our efforts to develop a refined theory of bifurcations that break these local symmetries. 


\section{Graph Networks}

We define the type of coupled cell network that we study in this paper.

\begin{definition}
A {\em graph network} is a connected graph  $G$ consisting of a finite set $\mathcal{C}$ of \emph{cells} and a set $\mathcal{E}\subseteq\mathcal{C}\times\mathcal{C}$ of \emph{arrows} such that $(i,i)\notin\mathcal{E}$ for all $i$, and  if $(i,j)\in\mathcal{E}$ then $(j,i)\in\mathcal{E}$.
We call $ij:=\{(i,j),(j,i)\}$ an \emph{edge} of the graph. 
\end{definition}

Note that a graph network is essentially a connected simple graph, where the vertices are cells and the edges are back and forth arrows.
We usually take $\mathcal{C}=\{1,\ldots,n\}$ to be the set of cells. 
A graph network is a special case of the coupled cell network of \cite{Torok}. 
The coupled cell networks of \cite{Torok, Pivato} have more than one type of cells and edges, so they consider graphs with colored vertices and colored directed arrows. 
In our graph networks we do not allow multiple arrows and loops. Also, all our cells and arrows are equivalent. So our networks are \emph{homogeneous} in the sense of \cite{aldis2010balance}, which in their notation means $\sim_{\mathcal{C}}$ and $\sim_{\mathcal{E}}$ are trivial. 
For some authors \cite{Torok}, in a homogeneous network all cells are input equivalent, so they would only call our networks homogeneous if the degree of each cell is the same.
In graph networks, every arrow has an opposite arrow, so the adjacency matrix is symmetric. 

For cell $i \in \mathcal{C}$ we let 
$N(i) := \{ j \in \mathcal{C} \mid (j, i) \in \mathcal E\}$
denote the set of {\em neighbors} of $i$. Since our networks are homogeneous, two cells $i$ and $j$ are \emph{input equivalent} in the sense of \cite{Torok} when $N(i)$ and $N(j)$ have the same size.

For each cell $i\in \mathcal{C}$ let $V:=\mathbb{R}^k$ be the common \emph{cell phase space}. Then the \emph{total phase space} of the network is $P:=V^\mathcal{C}=\prod_{i\in\mathcal{C}}V$. 
Using the natural ordering on $\mathcal{C}$, we identify $P$ with $V^n$
and write $x=(x_1,\ldots,x_n)\in V^n$. We refer to the pair $(G,V)$ as a \emph{graph network system}.

\subsection{Difference-Coupled Vector Fields on Graph Network Systems}

We now define several subsets of the set of admissible vector fields 
\cite{Torok, Pivato} 
corresponding to a graph network system $(G,V)$.

\begin{definition}
\label{diffCoupled}
A \emph{difference-coupled vector field} on  $(G,V)$ is a vector field $f: V^n \to V^n$ such that the component functions $f_i:V^n\to V$ of $f$ for $ i \in \mathcal{C}$ are of the form
\begin{equation}
\label{gi}
f_i(x) := g(x_i) + \sum_{j\in N(i)} h(x_j - x_i) 
\end{equation}
with $g,h:V\to V$. We define the following sets of difference-coupled vector fields.
\begin{enumerate}
\item 
$\D$ is the set of difference-coupled vector fields on $(G,V)$.
\item
$\Dzero:=\{f\in\D\mid h(0)=0 \}$ is the set of {\em exo-difference-coupled vector fields}.
\item
$\Dodd:=\{f\in\D\mid g,h\text{ odd} \}$ is the set of {\em odd-difference-coupled vector fields}.
\item
$\Dl:=\{f\in\D\mid g\text{ odd}, h\text{ linear} \}$ is the set of {\em linear-difference-coupled vector fields}.
\end{enumerate}
\end{definition}

Note that $h$ is linear as an operator in the definition of $\Dl$, and hence also odd.
Thus 
\begin{equation}
\label{gContainment}
\Dl \subseteq \Dodd \subseteq \Dzero \subseteq \D \subseteq \mathcal F_G,
\end{equation}
where $\mathcal F_G$ is the set of admissible vector fields on the graph cell network defined in \cite{Torok, Pivato} as $\mathcal F_G^P$. 
We drop the $P$ from our notation since in our networks we always have $P=V^n$.

Recall that a subspace $W$ of $V^n$ is $f$-invariant if $f(W) \subseteq W$. If $\mathcal{D}\subseteq \mathcal F_G$ then we say $W$ is $\mathcal{D}$-\emph{invariant} if $W$ is $f$-invariant for all $f\in\mathcal{D}$. The main goal of this paper is to characterize the $\mathcal{D}$-invariant subspaces of $V^n$ for $\mathcal{D}\in \{\Dl,\Dodd,\Dzero,\D,\mathcal F_G\}$.

A slightly different vector field from that defined in Equation~(\ref{gi}) is called Laplacian coupled in the physics litterature.  For example, \cite[Equation (3)]{SorrentinoClusterSynch}
and \cite[Equation (13)]{SchaubClusterSynch} can be written in our notation as
\begin{equation}
\label{lapCoupled}
f_i(x) = g(x_i) + \sum_{j\in N(i)} \big (h(x_j)- h(x_i) \big ).
\end{equation}
In the case where $h$ is linear ({\em i.e.},~a linear operator), then Equations~(\ref{gi}) and (\ref{lapCoupled}) are the same, since $h(u-v) = h(u) - h(v)$.  
The case where $h$ is a constant multiple of the identity yields a coupling that can be written in terms the Laplacian matrix, as described in Example \ref{semilinear}.

\subsection{Examples of Difference-Coupled Vector Fields}

In this section we give several applications of difference-coupled vector fields.
The dimension of the cell phase space $V = \R^k$ is of primary importance. The main application is a graph network dynamical system $\dot x = f(x)$, where $f$ is a difference-coupled vector field. Applications to iterated maps are similar.

\begin{example}
\label{semilinear}
In \cite{NSS3}, we approximate zeros of $f \in \Dl$ defined by  $g(x_i) = s x_i + x_i^3$, and $h(y) = y$ on a graph network system with  $V = \R$.
That is, we approximate solutions to
$$
f_i(x):=s x_i + x_i^3 + \sum_{j \in N(i)} (x_j - x_i) = 0
$$
using Newton's method.
The so-called diffusive coupling term is the negative of the well-known graph Laplacian $L$, defined by
\begin{equation}
\label{Lap}
(Lx)_i = \sum_{j \in N(i)} (x_i - x_j).
\end{equation}
In \cite{NSS2,NSS5} we apply this methodology to discretizations of PDE of the form $ \Delta u + g(u) = 0$.
We make extensive use of invariant subspaces;
if the initial guess is in an $f$-invariant subspace $W$, then the next approximation obtained by a Newton step is also in $W$.
\end{example}

\begin{example}
An example of a graph network dynamical system $(G,\R)$ with $f\in\Dl$ is a heat equation on a graph network.  
Assume that we have an embedding of the graph such that each edge has the same length.  Each cell is an identical metal ball
that obeys Newton's law of cooling, with the ambient temperature defined to be $0$.
Each edge is an identical heat-conducting
pipe.  Assuming the pipe does not lose heat out the sides,
and the heat flow through the pipe is proportional to the temperature difference of the balls,
the temperature $x_i$ of ball $i$ satisfies
\begin{equation}
\label{heatEqn}
\dot x_i = -s x_i + \sum_{j \in N(i)} (x_j - x_i) = -s x_i - (L x)_i.
\end{equation}
with a suitable time scaling. 
This linear equation has a general solution with $n$ arbitrary constants $c_i$ that can be written in terms of the 
eigenvalues $\lam_i \geq 0$ and eigenvectors $\psi_i$ of the graph Laplacian,
\begin{equation}
\label{heatSolution}
x = \sum_{i = 1}^n c_i e^{(-s - \lam_i)t} \psi_i .
\end{equation}
If $G$ is a discretization of a region,
then the System~(\ref{heatEqn}) approximates
the heat equation $u_t = -su + \Delta u$.

The explicit solution (\ref{heatSolution}) shows that the subspace of $\R^n$ spanned by \emph {any} set of eigenvectors is invariant for the 
\emph{linear} system 
(\ref{heatEqn}).  The theory of invariant subspaces of linear operators is a highly developed field, and is not the subject of the current paper.
For \emph{nonlinear} systems the invariant subspaces are much less common, and these are described in this paper.

\end{example}

\begin{figure}
\includegraphics[scale = .15]{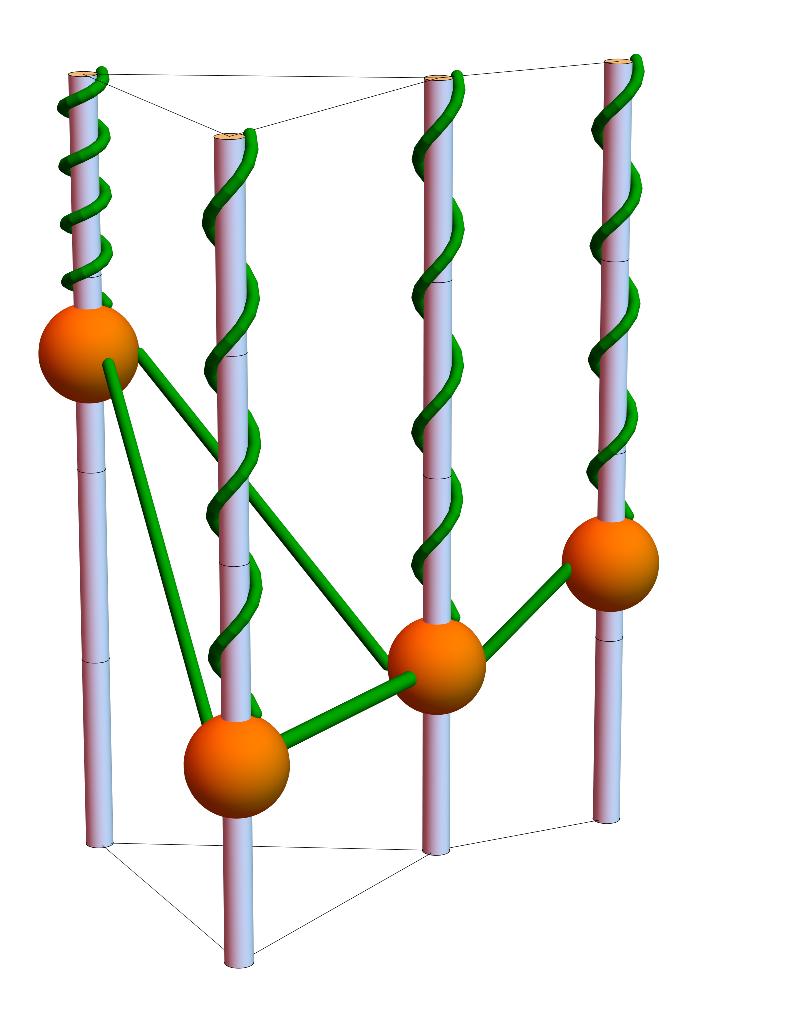}
\caption{
The mass-spring system described in Example \ref{springsEx}, based on the  paw graph.
The four masses are constrained to move vertically by the poles.
}
\label{P3springsFig}
\end{figure}

\begin{example}
\label{springsEx}
A frictionless mass-spring system $(G,\R^2)$  is shown in Figure~\ref{P3springsFig}.  
Assume that a graph $G$ can be drawn in the plane such that each edge has the same length.
The graph is drawn on the ceiling, and a mass
$m$ is suspended from each cell with a spring of spring constant $k_1$.
Each mass is constrained to move along a vertical axis.  The masses are coupled by springs with spring constant $k_2$ and natural length $0$.
First, assume that the springs are linear, obeying Hooke's law.   The dependent variable $u_i$
is the height of mass $i$ above the equilibrium position.
After scaling time using the natural frequency $\sqrt{k_1/m}$,
the system 
$$
\ddot u_i = - u_i + \sum_{j \in N(i)} \delta \, (u_j - u_i)
$$
has a dimensionless parameter $\delta = k_2/k_1 > 0$.

When each second order ODE is written as a system of two first order ODEs, the full system is an example of
Equation (\ref{gi}), where the state of each mass is described by $x_i = (u_i, \dot u_i)$.  The internal cell dynamics is given by $g(u, v) = (v, -u)$ and the coupling function is
$h(u, v) = (0, \delta u)$. 
The system is linear, so $g$ is odd, $h$ is linear, and  $f \in\Dl$. 
This system is an unrealistic model for large oscillations.
If we replace the vertical spring by a more general spring we get $g(u, v) = (v, F(u))$, where $F(u)$ is proportional to the force exerted by the spring at position $u$.  
If in addition the coupling springs are massive,
then they exert a downward force on the masses they are coupled to, and the system becomes
\begin{equation}
\label{massSpring}
\ddot u_i = F(u_i) + \sum_{j \in N(i)} \big(\gamma + \delta \, (u_j - u_i) \big),
\end{equation}
where $\gamma$ is proportional to the gravitational force due to the mass of each coupling spring.  This gives $h(u, v) = (0, \gamma + \delta u)$.

If $\gamma \neq 0$, then System (\ref{massSpring}) has $f \in \D \setminus \Dzero$.
If $\gamma = 0$ then $f \in \Dl$ for odd $F$ and $f \in \Dzero \setminus \Dodd$ for $F$ not odd.

Note that for massless (not necessarily linear) coupling springs, the coupling function $h$ is odd as a consequence of Newton's Third Law;
the spring pulls the two connected masses with equal and opposite forces.
Therefore it is easy to get a system with $f \in \Dodd\setminus\Dl$.
\end{example}

\begin{example}
\label{adjacency}
Some admissible vector fields are not difference-coupled.
Consider the graph network system $(P_3, \R)$, shown in Figure~\ref{P3Fig}.
Let $f:\R^3 \rightarrow \R^3$ be defined by $f(x) = A x$, where $A$ is the adjacency matrix of $P_3$.
It can be shown that $f$ is an admissible vector field, as defined by \cite{Pivato}, that is not 
difference-coupled.
\end{example}

\section{Partitions and Synchrony}

In this section we provide a full characterization of $\D$ and $\Dzero$-invariant subspaces of $P$. These invariant subspaces correspond to balanced and exo-balanced partitions of the cells.

\subsection{Balanced Partitions}

A partition $\calA$ of the cells of a graph network determines an equivalence relation.
For $i \in A \in \calA$ we use $[i] := A$ to denote the equivalence class of $i$.

\begin{definition}
\label{DeltaA}
Let $\calA$ be a partition of the cells of $(G,V)$. The {\em polydiagonal subspace} for $\calA$ is
$$
\Delta_{\calA} := \{x \in \Rall \mid x_i = x_{j} \mbox{ if } [i]=[j]  \}.
$$
For $x\in\Delta_\calA$ we define $x_{[i]}:=x_i$.
\end{definition}

The notation used in \cite{Torok, Pivato} and others for the polydiagonal subspace is
$\Delta_{\bowtie}$, highlighting the equivalence relation $\bowtie$ instead of the corresponding partition $\calA$.

We use capital letters like $A,B,\ldots\in\calA$ to denote equivalence classes in a partition. 
If $x\in \Delta_\calA$, then we use corresponding lower case letters $a,b,\ldots$ for the values $x_A,x_B,\ldots$, respectively.
For example, if $A=[i]$ then we write $x_{[i]}=x_A=a$.
Figures \ref{P3Fig} through \ref{C10_1_2Fig} show partitions $\calA$, or
equivalently the corresponding subspaces $\DeltaA$, for several graph networks.

We now apply the concept of a balanced equivalence relation, defined in \cite{Pivato} for coupled cell networks, to graph networks.
Note that the degree of cell $i$ is $|N(i)|$.
\begin{definition}
The {\em degree} of a cell $i$
{\em relative to a set}
 $A$ of cells is the number
$$
d_A(i) := |A \cap N(i) |
$$
of edges connecting cells in $A$ to $i$. 
\end{definition}

Our special case 
of a graph network
allows for a groupoid-free definition of balanced partitions, first defined in \cite{Pivato}
for more general coupled cell networks.
\begin{definition}
\label{balDef}
A partition $\calA$ of the cells of a graph network $G$ is \emph{balanced} if 
$$
d_A(i) = d_A(j)
$$
whenever $[i]=[j]$ and $A \in \calA$.  In this case, we define
$d_A([i]) := d_A(i)$.
A \emph{balanced subspace} is the polydiagonal subspace $\DeltaA$ for a balanced partition $\calA$.
\end{definition}

\begin{example}
\label{partitionOfSingletons}
For any graph network, the partition consisting of singleton sets is always balanced, since $[i] = [j]$ implies
$i = j$ and the condition is trivially satisfied.  An example is shown in Figure~\ref{P3Fig}(i).
\end{example}

\begin{figure}
\begin{tabular}{ccccc}
\includegraphics[scale =1]{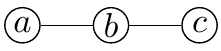} & &
\includegraphics[scale =1]{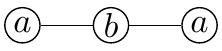} & &
\includegraphics[scale =1]{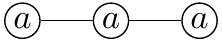} \\
(i) && (ii) && (iii)
\end{tabular}
\caption{Exo-balanced partitions of the cells of the path $P_3$.
Figures (i) and (ii) show balanced partitions.  The partition in Figure (iii) is 
strictly exo-balanced, meaning it is exo-balanced but not balanced. 
These figures can also be interpreted as showing exo-balanced subspaces.
}
\label{P3Fig}
\end{figure}

\begin{example}
\label{P3Ex}
The path $P_3$ with three cells $\mathcal{C} = \{1, 2, 3\}$ 
numbered from left to right
is shown in Figure~\ref{P3Fig}.
As mentioned in Example~\ref{partitionOfSingletons}, Figure~\ref{P3Fig}(i) shows a balanced partition.
The balanced partition $\calA = \{A, B\}$, with two classes $A = \{1, 3\}$ and $B = \{2\}$, shown in Figure~\ref{P3Fig}(ii), has degrees
$d_A(A) = 0 = d_B(B)$, $d_A(B) =2$, and $ d_B(A) = 1$.
The partition with $\calA = \{\mathcal{C}\}$, shown in Figure~\ref{P3Fig}(iii), is not balanced since
$d_\mathcal{C}(1) \neq d_\mathcal{C}(2)$.
\end{example}

\subsection{Exo-Balanced Partitions}

The following definition is the first of three 
generalizations of balanced partitions.

\begin{definition}
\label{exoDef}
A partition $\calA$ of the cells  of a graph network is \emph{exo-balanced} if 
$$
d_A(i) = d_A(j)
$$ 
whenever $[i]=[j] \neq A \in \calA$.
In this case we define $d_A([i]) := d_A(i)$ for all $[i] \neq A$. 
If a partition is exo-balanced but not balanced, we call it \emph{strictly exo-balanced}.
An \emph{exo-balanced subspace} is the polydiagonal subspace $\DeltaA$ for an exo-balanced partition $\calA$.
\end{definition}

The term exo-balanced means balanced with other equivalence classes in the partition,
but not necessarily balanced within a single equivalence class.
Clearly, every balanced partition is exo-balanced since
the definition of exo-balanced is the same as the definition of balanced with the additional condition that $[i] \neq A$.
Note that $d_A(A)$ is not defined if $\calA$ is strictly exo-balanced.

\begin{example}
The singleton partition $\calA = \{ \mathcal{C} \}$ is exo-balanced for any graph network
since the condition
for being exo-balanced is vacuously true.
This same partition is balanced if and only if every cell has the same degree.
Figure~\ref{P3Fig}(iii) shows an example of a strictly exo-balanced partition. It is not balanced since cells 1 and 3 have degree 1 
whereas cell 2 has degree 2.
\end{example}

\begin{figure}
\begin{tabular}{ccccc}
\includegraphics[scale = 1]{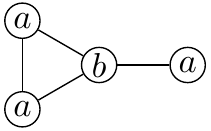} & \quad &
\includegraphics[scale = 1]{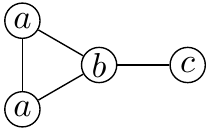} & \quad &
\includegraphics[scale = 1]{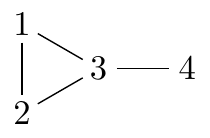} \\
(i) & & (ii) & & (iii)
\end{tabular}
\caption{Exo-balanced partitions on a four cell graph network.  
Figure (i) shows a strictly exo-balanced partition. Figure (ii) shows a balanced partition.
Figure (iii) shows the labels of the cells.}
\label{pawFig}
\end{figure}

\begin{example}
\label{pawEx}
Consider the graph network depicted in Figure~\ref{pawFig}. 
The partition 
$\calA = \{ A, B \}$, with
$A := \{1, 2, 4\}$ and $B := \{3\}$,
shown in Figure~\ref{pawFig}(i), is exo-balanced with degrees
$d_A(B) = 3$ and $d_B(A) = 1$. 
However, the partition is not  balanced, since $d_A(1) = 1 \neq d_A(4) = 0$ even though $1, 4 \in A$.
The partition in Figure~\ref{pawFig} (ii) is balanced with
$d_A(B) = 2$, $d_A(A) = d_B(A) = d_B(C) = d_C(B) = 1$, and $d_A(C) = d_C(A) = d_B(B) = d_C(C) = 0$.
\end{example}
 
Note that $d_A(B) \neq d_B(A)$ in general,
as evidenced by the partitions in Figure~\ref{P3Fig}(ii) and Figure~\ref{pawFig}. 
The following result describes the relationship between these two degrees.

\begin{prop}
\label{ABedges}
If $A$ and $B$ are different classes of an exo-balanced partition of the cells of a graph network $G$, then
$$
|A| \; d_B(A) = |B| \; d_A(B).
$$
\end{prop}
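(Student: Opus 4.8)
The plan is to prove the identity by counting, in two different ways, the number of edges that connect a cell in $A$ to a cell in $B$; call this quantity $E(A,B)$. Since $A$ and $B$ are distinct equivalence classes of the partition $\calA$, they are disjoint, so every such edge has exactly one endpoint in $A$ and one endpoint in $B$. This is a standard double-counting (handshake-type) argument, and the whole content of the proposition is extracting the two factorizations of $E(A,B)$.

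First I would count $E(A,B)$ by summing over the cells of $A$. For a fixed cell $i \in A$, the number of edges joining $i$ to a cell of $B$ is precisely $d_B(i) = |B \cap N(i)|$, so $E(A,B) = \sum_{i \in A} d_B(i)$. This is the one place where the exo-balanced hypothesis enters: because $A \neq B$, the defining condition of exo-balance applies to the class $A$ with respect to $B$, giving $d_B(i) = d_B(A)$ for every $i \in A$. Hence the sum collapses to $E(A,B) = |A| \, d_B(A)$. By the symmetric computation, summing over the cells of $B$ and using $B \neq A$ to invoke exo-balance for the class $B$ with respect to $A$, I obtain $E(A,B) = \sum_{j \in B} d_A(j) = |B| \, d_A(B)$. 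Equating the two expressions for $E(A,B)$ then yields the claimed identity.

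There is no real obstacle here; the only subtlety worth stating explicitly is that $d_B(A)$ and $d_A(B)$ are well defined at all. This is exactly what $A \neq B$ together with exo-balance guarantees, since these hypotheses make $d_B(\cdot)$ constant on $A$ and $d_A(\cdot)$ constant on $B$, so the notation $d_B(A)$ and $d_A(B)$ is legitimate. I would also note that the symmetry of the adjacency relation (every arrow has an opposite arrow, so the adjacency matrix is symmetric) is what makes the same set of edges countable consistently from either side; for a genuinely directed network the two sums would measure different quantities and the identity need not hold.
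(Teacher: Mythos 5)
Your proof is correct and is exactly the argument in the paper: both sides count the number of edges between a cell of $A$ and a cell of $B$, with exo-balance making $d_B(\cdot)$ constant on $A$ and $d_A(\cdot)$ constant on $B$. The paper states this in one sentence; your version simply makes the double counting and the well-definedness of $d_B(A)$, $d_A(B)$ explicit.
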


\begin{proof}
Both $|A| \; d_B(A)$ and $|B| \; d_A(B)$ count the number of edges that connect a cell in $A$ to a cell in $B$.
\end{proof}

An \emph{automorphism} of the graph $G$ is a permutation $\sigma$ of the cells that preserves the edges of $G$, that is, $ij$ is an edge exactly when $\sigma(i)\sigma(j)$ is an edge. 
The automorphisms of $G$ form a group $\aut(G)$,
which acts on $V^n$ by $\sigma \cdot x := (x_{\sigma^{-1}(1)}, x_{\sigma^{-1}(2)}, \ldots , x_{\sigma^{-1}(n)})$.

A subgroup $\Sigma$ of $\aut(G)$ is an \emph{isotropy subgroup} if $\Sigma=\pstab(\fix(\Sigma,V^n))$, where  $\fix(\Sigma, V^n)$ is the fixed point subspace of the $\Sigma$ action on $V^n$, 
and $\pstab(W)$ is the point stabilizer of $W \subseteq V^n$
\cite{NSS3}.
The fixed point subspace $\fix(\Sigma,V^n)$ of an isotropy subgroup $\Sigma$ is the polydiagonal subspace $\DeltaA$, 
where $\calA$ is the set of group orbits of the $\Sigma$  action on $\mathcal{C}$. 
The partition $\calA$ obtained this way is always balanced.
Figure~\ref{pawFig}(ii) is an example of a fixed point subspace with $\Sigma = \aut(G) \cong \Z_2$.

\begin{figure}
\begin{tabular}{ccccc}
\includegraphics{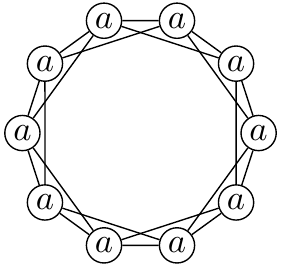} &&
\includegraphics{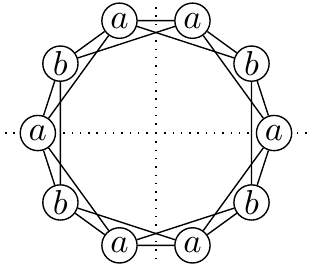} &&
\includegraphics{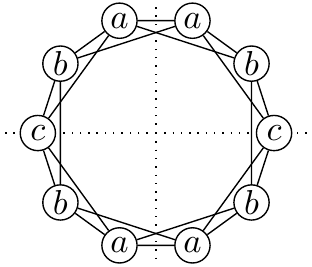} \\
(i) && (ii) && (iii)
\end{tabular}
\caption{Three balanced subspaces on the cyclic graph $C_{10(1,2)}$. This graph has 10 cells in a ring, with each cell connected to its nearest
and next-to-nearest neighbors. Figure (i) shows the fixed point subspace for $\aut(C_{10(1,2)}) \cong \mathbb{D}_{10}$.
The balanced subspace shown in (ii) is not a fixed point subspace. 
The fixed point subspace with the symmetry of the subspace in (ii) is shown in (iii).
}
\label{C10_1_2Fig}
\end{figure}

\begin{example}
While every fixed point subspace is a balanced subspace, the converse is not true.
As pointed out in \cite{Nicol}, the balanced subspace shown in Figure~\ref{C10_1_2Fig}(ii) 
is not a fixed point subspace.
The automorphism group of the graph is $\mathbb{D}_{10}$ \cite{Nicol}, and the 
point stabilizer of the balanced subspace shown is isomorphic to $\Z_2 \times \Z_2$, 
generated by the two reflections about the dotted lines.
The fixed point subspace with this symmetry is shown in Figure~\ref{C10_1_2Fig}(iii).
\end{example}

\subsection{Invariant Subspaces and Partitions}
Theorem 6.5 in \cite{Pivato}, applied to a graph network system $(G, V)$ with total phase space $P = V^n$, states that $\DeltaA$ is balanced if and only if $\DeltaA$ is 
$\mathcal F_G$-invariant.  
We present similar theorems for difference coupled vector fields.  A consequence of Theorem~\ref{balanced} is that a polydiagonal subspace $\DeltaA$ is
$\D$-invariant precisely when it is $\mathcal F_G$-invariant.  We go on to show that $\DeltaA$ is $\Dzero$-invariant if and only if $\calA$ is exo-balanced.

The proofs in this subsection make use of the simplification of Equation~(\ref{gi}) obtained by grouping neighbors of cell $i$ into equivalence classes.  Given a partition $\calA$ of cells,
\begin{equation}
\label{giProof}
f_i(x) = g(x_{[i]}) + \sum_{B \in \calA} d_B(i) h(x_B - x_{[i]})
\end{equation}
for all $x \in \DeltaA$ and all $i \in \calC$.

\begin{thm}
\label{balanced}
Let $\calA$ be a partition of the cells of $(G, V)$. Then $\DeltaA$ is $\D$-invariant if and only if $\calA$ is balanced.
\end{thm}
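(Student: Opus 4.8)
The plan is to prove both implications from the grouped form (\ref{giProof}) of a difference-coupled vector field restricted to $\DeltaA$, which I may assume for all $x \in \DeltaA$ and all $i \in \calC$. Recall that $f(x) \in \DeltaA$ means precisely that $f_i(x) = f_j(x)$ whenever $[i] = [j]$, so the whole theorem reduces to comparing $f_i$ and $f_j$ on same-class cells.

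For the forward direction I would assume $\calA$ is balanced and fix an arbitrary $f \in \D$, $x \in \DeltaA$, and cells with $[i] = [j]$. Since $x \in \DeltaA$ we have $x_{[i]} = x_{[j]}$, so the internal terms agree, $g(x_{[i]}) = g(x_{[j]})$. Because $\calA$ is balanced we have $d_B(i) = d_B(j)$ for every $B \in \calA$, so the two coupling sums in (\ref{giProof}) agree term by term. Hence $f_i(x) = f_j(x)$, giving $f(x) \in \DeltaA$. This direction is routine once (\ref{giProof}) is in hand.

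For the converse I would argue by contrapositive: assuming $\calA$ is not balanced, I construct a single $f \in \D$ and a single $x \in \DeltaA$ with $f(x) \notin \DeltaA$. Failure of balance gives cells $i, j$ with $[i] = [j]$ and a class $A \in \calA$ with $d_A(i) \neq d_A(j)$. Subtracting the two instances of (\ref{giProof}) and cancelling the $g$-terms via $x_{[i]} = x_{[j]}$ yields
$$
f_i(x) - f_j(x) = \sum_{B \in \calA} \big(d_B(i) - d_B(j)\big)\, h(x_B - x_{[i]}),
$$
with integer coefficients that are not all zero (the $B = A$ coefficient is nonzero). The idea is to choose the class values and the coupling function so that exactly one term survives. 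Concretely, I would pick $x \in \DeltaA$ whose class values $\{x_B : B \in \calA\}$ are pairwise distinct (possible since $V = \R^k$ is infinite), so the shifted arguments $y_B := x_B - x_{[i]}$ are pairwise distinct with $y_{[i]} = 0$. Since membership in $\D$ places no restriction on $h : V \to V$ beyond being a function, I may prescribe its values freely at these finitely many distinct points: set $h(y_A) = v$ for a fixed $v \neq 0$, set $h(y_B) = 0$ for $B \neq A$, and extend $h$ by $0$ elsewhere (taking $g = 0$). Then the displayed difference collapses to $\big(d_A(i) - d_A(j)\big)\, v \neq 0$, contradicting $\D$-invariance.

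The main obstacle — indeed the only nontrivial point — is this converse construction, which hinges on exploiting the full freedom available in $\D$: because $h$ is an \emph{unconstrained} function, it can be arranged to detect a single difference class while annihilating all the others, once the class values are chosen distinct enough to separate the arguments $y_B$. The one place requiring care is the boundary case $A = [i]$, where the surviving argument is $y_A = 0$; this is still legitimate because in $\D$ the value $h(0)$ need not vanish, so prescribing $h(0) = v \neq 0$ is allowed (this is exactly where $\Dzero$ would behave differently and force the weaker exo-balanced condition instead).
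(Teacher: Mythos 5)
Your proposal is correct and follows essentially the same route as the paper: the hard direction is the identical argument (choose $x\in\DeltaA$ with pairwise distinct class values so the arguments $x_B-x_{[i]}$ are separated, then pick $h$ supported on a single one of those points to isolate the coefficient $d_A(i)-d_A(j)$), and your observation that $h(0)$ need not vanish in $\D$ is exactly the point that distinguishes this theorem from the exo-balanced case. The only difference is cosmetic: you prove the easy direction directly from Equation~(\ref{giProof}), whereas the paper obtains it by citing \cite[Theorem 6.5]{Pivato} together with $\D\subseteq\mathcal F_G$; both are valid.
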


\begin{proof}
The backward direction of the theorem is the consequence of \cite[Theorem 6.5]{Pivato},
since $\D \subseteq \mathcal F_G$. The proof of the forward direction does not follow immediately from \cite{Pivato} since $\D \neq \mathcal F_G$.

Assume $\DeltaA$ is $\D$-invariant. Let $[i] = [j] \in \calA$.  
Using Equation~(\ref{giProof}), the $f$-invariance of $\DeltaA$ implies that
$$
f_i(x) -f_j(x) = \sum_{B \in \calA}( d_B(i) - d_B(j) ) h(x_B - x_{[i]}) = 0
$$
for all $h:V \rightarrow V$ and all $x \in \DeltaA$. Let $x\in\DeltaA$ such that $x_B-x_{[i]}$ is different for each $B\in\calA$.
For each $B \in \calA$ we can find an $h$ such that $h(x_B - x_{[i]}) \neq 0$ and $h(x_A - x_{[i]}) = 0$ for all $A \neq B$.
Thus $d_B(i) = d_B(j)$ and the partition is balanced.
\end{proof}

\begin{rem}
\label{AdjBalanced}
It was pointed out in \cite{Aguiar&Dias} that a simple linear algebra calculation can determine
if a partition is balanced. 
The subspace $\DeltaA$ is balanced if and only if 
$\DeltaA$ is invariant under the action of the adjacency matrix of the graph.
For example, the subspace shown in Figure~\ref{P3Fig}(ii) is balanced because
the general element $(a, b, a) \in \DeltaA$ satisfies
$$
\begin{bmatrix}
0 & 1 & 0 \\
1 & 0 & 1 \\
0 & 1 & 0
\end{bmatrix}
\begin{bmatrix}
a \\
b \\
a
\end{bmatrix}
=
\begin{bmatrix}
b \\
2a \\
b
\end{bmatrix}
\in \DeltaA.
$$
As mentioned in Example~\ref{adjacency}, multiplication by the adjacency matrix is
 an admissible vector field that is not a difference-coupled vector field in $\D$
on the graph $G = P_3$.
Nevertheless, the matrix can be used to find the $\D$-invariant subspaces.
\end{rem}

\begin{thm}
\label{exo-balanced}
Let $\calA$ be a partition of the cells of $(G, V)$. Then the following are equivalent:
\begin{enumerate}
\item[$(1)$]
$\DeltaA$ is $\Dzero$-invariant;
\item[$(2)$]
$\DeltaA$ is $\Dodd$-invariant;
\item[$(3)$]
$\DeltaA$ is $\Dl$-invariant;
\item[$(4)$]
$\calA$ is exo-balanced.
\end{enumerate}
\end{thm}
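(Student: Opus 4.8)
The plan is to prove the cycle of implications $(1) \Rightarrow (2) \Rightarrow (3) \Rightarrow (4) \Rightarrow (1)$, using the containments $\Dl \subseteq \Dodd \subseteq \Dzero$ from Equation~(\ref{gContainment}). The first two implications are immediate: invariance under every field in a set gives invariance under every field in any subset, so $(1) \Rightarrow (2)$ and $(2) \Rightarrow (3)$ follow from $\Dodd \subseteq \Dzero$ and $\Dl \subseteq \Dodd$. This leaves the two substantive implications, $(4) \Rightarrow (1)$ and $(3) \Rightarrow (4)$, which together isolate exactly the exo-balanced condition.

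The common engine is Equation~(\ref{giProof}) together with the fact that every $f$ in $\Dzero$, $\Dodd$, or $\Dl$ satisfies $h(0) = 0$ (by definition in the first case, and because odd and linear maps fix the origin in the other two). For $[i] = [j]$ and $x \in \DeltaA$ the internal terms $g(x_{[i]})$ cancel, giving
$$
f_i(x) - f_j(x) = \sum_{B \in \calA}\big(d_B(i) - d_B(j)\big)\, h(x_B - x_{[i]}).
$$
The key structural point is that the $B = [i]$ summand equals $\big(d_{[i]}(i) - d_{[i]}(j)\big)\, h(0) = 0$, so only the terms with $B \neq [i]$ survive. This is precisely why these three smaller collections detect the exo-balanced condition rather than the balanced condition of Theorem~\ref{balanced}: forcing $h(0) = 0$ removes the ability to probe $d_{[i]}$.

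For $(4) \Rightarrow (1)$ I would assume $\calA$ is exo-balanced, so $d_B(i) = d_B(j)$ for all $B \neq [i]$ whenever $[i] = [j]$. Then every surviving coefficient in the displayed sum vanishes, so $f_i(x) = f_j(x)$ for all $f \in \Dzero$ and all $x \in \DeltaA$, whence $\DeltaA$ is $\Dzero$-invariant. The remaining implication $(3) \Rightarrow (4)$ is where the difficulty lies, since $\Dl$ is the smallest collection and offers the fewest test fields; in particular the argument from Theorem~\ref{balanced}, which chose $h$ to isolate a single difference $x_B - x_{[i]}$, is unavailable because $h$ must now be linear. Instead I would exploit linearity itself: taking the single field with $h = \mathrm{id}$ and $g$ odd (so $f \in \Dl$), the invariance condition becomes $\sum_{B \neq [i]} \big(d_B(i) - d_B(j)\big)(x_B - x_{[i]}) = 0$ for all $x \in \DeltaA$. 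Since the class values $x_B$ range independently over $V$, this is a linear identity in them, so the coefficient of each $x_B$ with $B \neq [i]$ must vanish, yielding $d_B(i) = d_B(j)$ for all $B \neq [i]$ --- exactly exo-balancedness. I expect this final step, recognizing that one linear coupling already separates the classes, to be the main point requiring care.
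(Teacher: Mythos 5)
Your proposal is correct and follows essentially the same route as the paper: the same cycle $(1)\Rightarrow(2)\Rightarrow(3)\Rightarrow(4)\Rightarrow(1)$ via the containments, the same use of $h(0)=0$ to kill the $B=[i]$ term in Equation~(\ref{giProof}) for $(4)\Rightarrow(1)$, and the same single test field $h=\mathrm{id}$ for $(3)\Rightarrow(4)$. The only cosmetic difference is that the paper extracts the coefficient $d_A(i)-d_A(j)$ by plugging in the specific indicator vector $x_A=v$, $x_B=0$ for $B\neq A$, whereas you argue by linear independence of the class values $x_B$; these are the same argument.
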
 

\begin{proof}
We have $(1)\Rightarrow (2) \Rightarrow (3) $ since $\Dl \subseteq \Dodd \subseteq \Dzero$. 

To see that $(3)\Rightarrow (4)$, assume that $\DeltaA$ is $\Dl$-invariant.
Assume $[i]=[j]\neq A\in \calA$.
Let $g$ be the zero function and $h$ be the identity function so $f \in\Dl$.
For a fixed nonzero $v\in\mathbb{R}^k$, let $x \in \DeltaA$ be defined by
$$
x_\ell = 
\begin{cases}
v & \mbox{if }\ell \in A,\\
0 & \mbox{if }\ell \in \mathcal{C} \setminus A,
\end{cases}
$$
that is, $x_A = v$ and $x_B = 0$ if $B \neq A$. 
Invariance implies $f(x)\in\DeltaA$, so $f_i(x)=f_j(x)$.
Since $[i] \neq A$, the coupling term $h(x_B - x_{[i]})= h(0) =0$ in
Equation~(\ref{giProof}) unless $B=A$. 
Thus, we have $f_i(x) = g(x_{[i]}) + d_A(i) h(v) = d_A(i) v$ for this choice of $x$, $g$,
and $h$.  It follows that
$$
d_A(i)v = f_i(x)  = f_j(x)  = d_A(j)v,
$$
and hence $d_A(i) =d_A(j)$.
Thus $\calA$ is exo-balanced.

Finally to show that $(4)\Rightarrow (1)$, assume that $\calA$ is exo-balanced. Let $f\in\Dzero$, $x\in \DeltaA$. 
Since $\calA$ is exo-balanced, we can replace $d_B(i)$ with $d_B([i])$ in 
Equation~(\ref{giProof}), for all $[i]\ne B$. When $B = [i]$,
the term contributes nothing since $h(x_{[i]}-x_{[i]})=h(0)=0$, so
\begin{equation}
\label{giProof2}
f_i(x) = g(x_{[i]}) + \sum_{B \in \calA\setminus\{[i]\}} d_B([i]) h(x_B - x_{[i]}). 
\end{equation}
Hence $f(x)\in\DeltaA$ since $[i]=[j]$ implies $f_i(x) = f_j(x)$.

\end{proof}

\begin{rem}
\label{LapExo-balanced}
Similar to Remark~\ref{AdjBalanced}, there is an easy test for checking if a partition is exo-balanced in terms of matrix invariance.
If $g$ is zero and $h$ is the identity, 
then $f$ is multiplication with the negative of the graph Laplacian $L$ defined in Equation~(\ref{Lap}).
So in the proof of Theorem~\ref{exo-balanced}, we show that if $\DeltaA$ is $L$-invariant, 
then $\DeltaA$ is exo-balanced. The other direction is trivial.
Thus, the subspace $\DeltaA$ is exo-balanced if and only if it is $L$-invariant.

For example, the subspace in
Figure~\ref{pawFig}(i) is exo-balanced because
$$
\begin{bmatrix}
2 & -1 & -1 & 0 \\
-1 &  2 & -1 & 0 \\
-1 & -1 & 3 & -1 \\
0 & 0 & -1 & 1
\end{bmatrix}
\begin{bmatrix}
a \\
a \\
b \\
a
\end{bmatrix}
=
\begin{bmatrix}
a - b \\
a - b\\
3b - 3a \\
a - b
\end{bmatrix}
\in \DeltaA.
$$
Replacing the 
Laplacian matrix with the adjacency
matrix gives
$$
\begin{bmatrix}
0 & 1 & 1 & 0 \\
1 &  0 & 1 & 0 \\
1 & 1 & 0 & 1 \\
0 & 0 & 1 & 0
\end{bmatrix}
\begin{bmatrix}
a \\
a \\
b \\
a
\end{bmatrix}
=
\begin{bmatrix}
a + b \\
a + b \\
3a \\
b 
\end{bmatrix}
\not \in \DeltaA,
$$
so $\DeltaA$ is not balanced according to the test described in Remark~\ref{AdjBalanced}.

Similar observations were made in \cite{AguiarDiasWeighted,SchaubClusterSynch,SorrentinoClusterSynch}.   They observed
that an $L$-invariant partition is also invariant for the more general System  (\ref{lapCoupled}), and they call these
\emph{external equitable partitions} (EEP).
Even though Systems (\ref{gi}) and (\ref{lapCoupled}) are different, they coincide when $h$ is the identity function.
As a consequence, their EEPs are our exo-balanced partitions. 

In \cite{SorrentinoClusterSynch} an approach is described that uses group theoretical considerations to generate exo-balanced partions. Using their reduced search space might be more efficient than checking all possible partitions.
\end{rem}

Recall that a graph is $d$-\emph{regular} if the degree of every cell is $d$.

\begin{prop}
\label{regularNoSEB}
The cell set of a $d$-regular graph $G$ has no strictly exo-balanced partitions.
\end{prop}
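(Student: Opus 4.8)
The plan is to show that on a $d$-regular graph the exo-balanced condition already forces balance, so that the qualifier ``strictly'' can never apply. Recall that a partition $\calA$ is balanced when $d_A(i) = d_A(j)$ for \emph{every} class $A$ whenever $[i] = [j]$, whereas exo-balanced only demands this for the classes $A \neq [i]$. Thus the sole gap between the two notions is the within-class degree, the case $A = [i]$, and the entire proposition reduces to checking that $d_{[i]}(i) = d_{[i]}(j)$ whenever $[i] = [j]$.

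First I would exploit $d$-regularity through the degree sum. Since the classes of $\calA$ partition $\calC$, they partition the neighbor set $N(i)$ as well, so for any cell $i$,
\[
d = |N(i)| = \sum_{A \in \calA} |A \cap N(i)| = \sum_{A \in \calA} d_A(i).
\]
Solving for the diagonal term gives $d_{[i]}(i) = d - \sum_{A \in \calA,\, A \neq [i]} d_A(i)$, expressing the within-class degree entirely in terms of the off-diagonal degrees.

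Finally, fix $i$ and $j$ with $[i] = [j]$. For each class $A \neq [i]$ the exo-balanced hypothesis gives $d_A(i) = d_A(j)$, so the sums $\sum_{A \neq [i]} d_A(i)$ and $\sum_{A \neq [j]} d_A(j)$ coincide term by term; combined with the constant total degree $d$, this forces $d_{[i]}(i) = d_{[i]}(j)$. That is precisely the missing within-class balance, so $\calA$ is balanced and hence cannot be strictly exo-balanced. There is no genuine obstacle in this argument; the only point requiring care is the bookkeeping observation that exo-balance pins down every off-diagonal term of the degree sum, after which regularity determines the one remaining term automatically.
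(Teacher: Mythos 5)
Your proof is correct, but it takes a genuinely different route from the paper. You argue combinatorially: since the classes of $\calA$ partition $N(i)$, regularity gives $d = \sum_{A \in \calA} d_A(i)$ for every cell, and exo-balance pins down every off-diagonal term $d_A(i)=d_A(j)$ for $A\neq[i]=[j]$, so the single remaining term $d_{[i]}(i)$ is forced to equal $d_{[i]}(j)$; this closes the only gap between the definitions of exo-balanced and balanced. The paper instead uses the linear-algebra characterizations it has already set up: $\DeltaA$ is balanced iff it is invariant under the adjacency matrix $M$ (Remark~\ref{AdjBalanced}) and exo-balanced iff it is invariant under the Laplacian $L$ (Remark~\ref{LapExo-balanced}); for a $d$-regular graph $L = dI - M$, so the two matrices have the same invariant subspaces and the two notions coincide. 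Your argument is more elementary and self-contained --- it works directly from Definitions~\ref{balDef} and \ref{exoDef} without invoking the matrix-invariance remarks --- while the paper's argument is a one-line corollary of machinery it wants to showcase anyway. Both are complete; your identity $d_{[i]}(i) = d - \sum_{A \neq [i]} d_A(i)$ is essentially the combinatorial shadow of the matrix identity $L = dI - M$.
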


\begin{proof}
The Laplacian of $G$ is $L=dI - A$, where $A$ is the adjacency matrix. 
Therefore the invariant subspaces of $L$ and $A$ are the same.  By Remarks \ref{AdjBalanced} and \ref{LapExo-balanced}, 
every exo-balanced partition is balanced.
\end{proof}

\subsection{Reduced Systems for Partitions}
\label{reducedForPartitions}
The theorems in the previous section show that the ODE $\dot x = f(x)$ can be restricted to an invariant subspace $\DeltaA$, yielding a
lower-dimensional system.  We make this formal with propositions in this section.

\begin{definition}
\label{fAdef}
If $\calA$ is a partition of $\calC$ and $f(\DeltaA) \subseteq \DeltaA$ for some $f\in\mathcal F_G$, then we define $f_{[i]} : \DeltaA \rightarrow V$ by
$$
f_{[i]}(x) := f_i(x)
$$
for all $x \in \DeltaA$ and $i \in \calC$.
\end{definition}

Note that $f_A$ is well-defined if $\DeltaA$ is $f$-invariant, since $f_i(x) = f_j(x)$ for all $i, j \in A \in \calA$.

\begin{prop}
\label{qSystemBal}
Let $\calA$ be a balanced partition of the cells of $(G, V)$, and $f \in \D$.
Then 
\begin{equation}
\label{gRestrictedBal}
f_A(x) = g(x_A) + \sum_{B \in \calA} d_B(A) h(x_B - x_A)
\end{equation}
for all $A \in \calA$, $x \in \DeltaA$. 
Thus, the restriction of $f$ to $\DeltaA$ is determined by the $|\calA|^2$ integer degrees $d_A(B)$, with $A, B \in \calA$. 
\end{prop}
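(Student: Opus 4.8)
The plan is to reduce everything to Equation~(\ref{giProof}) and then exploit the balance hypothesis. First I would observe that since $\calA$ is balanced, Theorem~\ref{balanced} guarantees that $\DeltaA$ is $\D$-invariant, so $f(\DeltaA) \subseteq \DeltaA$ and the restricted component $f_A$ is well-defined by Definition~\ref{fAdef} and the remark following it. In particular, for any representative $i \in A$ and any $x \in \DeltaA$ we have $f_A(x) = f_i(x)$, so it suffices to compute $f_i(x)$ for a single cell $i \in A$ and verify that the result does not depend on the choice of $i$.

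The substantive step is to start from Equation~(\ref{giProof}), which rewrites the defining sum of Equation~(\ref{gi}) by grouping the neighbors of $i$ according to the classes of $\calA$. The point is that on $\DeltaA$ we have $x_j = x_B$ whenever $j$ lies in class $B$, and there are exactly $d_B(i)$ neighbors of $i$ in $B$, so $\sum_{j\in N(i)} h(x_j - x_{[i]})$ collapses to $\sum_{B\in\calA} d_B(i)\, h(x_B - x_{[i]})$. Now I would invoke the balance condition: for $i \in A$, Definition~\ref{balDef} gives $d_B(i) = d_B(A)$ for every $B \in \calA$, while $x_{[i]} = x_A$. Substituting these into Equation~(\ref{giProof}) yields
\begin{equation*}
f_i(x) = g(x_A) + \sum_{B \in \calA} d_B(A)\, h(x_B - x_A),
\end{equation*}
an expression that no longer depends on the representative $i \in A$. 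This is precisely Equation~(\ref{gRestrictedBal}), and its independence of $i$ reconfirms that $f_A$ is well-defined.

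For the final counting assertion I would note that in this formula the internal dynamics $g$ and the coupling function $h$ are fixed once $f \in \D$ is chosen, so the only remaining data are the integer coefficients $d_B(A)$; letting $A$ and $B$ range over $\calA$ produces the $|\calA|^2$ degrees claimed, and these determine the reduced vector field $(f_A)_{A\in\calA}$ on $\DeltaA$ completely. I do not expect a genuine obstacle here: the entire content is the regrouping of Equation~(\ref{giProof}) together with the substitution $d_B(i) = d_B(A)$ licensed by the balance hypothesis, so once invariance is supplied by Theorem~\ref{balanced} the argument is a direct computation. The only point requiring care is to keep straight that the replacement of $d_B(i)$ by $d_B(A)$ is exactly what makes the formula well-defined at the level of classes rather than individual cells.
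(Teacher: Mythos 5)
Your argument is correct and follows the paper's own proof exactly: invariance from Theorem~\ref{balanced}, then Equation~(\ref{giProof}) together with the substitution $d_B(i)=d_B(A)$ licensed by Definition~\ref{balDef}, and the counting of the $|\calA|^2$ degrees. The paper states this more tersely, but the content is identical.
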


\begin{proof}
Theorem~\ref{balanced} implies that $\DeltaA$ is $f$-invariant.
Equation~(\ref{gRestrictedBal}) follows from Equation~(\ref{giProof}) and the definition of $d_B(A)$ 
for balanced partitions. 
Clearly, $f  | \DeltaA$ is determined by the $f_A$ functions.
\end{proof}

There is a similar proposition for exo-balanced partitions.

\begin{prop}
\label{qSystemBal2}
Let $\calA$ be an exo-balanced partition of the cells of $(G, V)$, and $f \in \Dzero$.
Then 
\begin{equation}
\label{gRestrictedExo}
f_A(x) = g(x_A) + \sum_{B \in \calA \setminus A} d_B(A) h(x_B - x_A)
\end{equation}
for all $A \in \calA$, $x \in \DeltaA$. 
Thus, the restriction of $f$ to $\DeltaA$ is determined by the $|\calA|( |\calA| - 1)$ integer degrees $d_A(B)$,
with $A, B \in \calA$ such that $A \neq B$. 
\end{prop}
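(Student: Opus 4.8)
The plan is to mirror the proof of Proposition~\ref{qSystemBal}, replacing the appeal to Theorem~\ref{balanced} with Theorem~\ref{exo-balanced} and reusing the reduced formula~(\ref{giProof2}) already established inside that theorem's proof. First I would invoke Theorem~\ref{exo-balanced}: since $\calA$ is exo-balanced, $\DeltaA$ is $\Dzero$-invariant, so for $f \in \Dzero$ the subspace $\DeltaA$ is $f$-invariant. This is exactly what is needed to know that $f_A$ is well-defined in the sense of Definition~\ref{fAdef}, because $f_i(x) = f_j(x)$ whenever $[i] = [j]$ and $x \in \DeltaA$.

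Next I would fix a class $A \in \calA$, choose any representative $i \in A$, and evaluate $f_i$ on $x \in \DeltaA$. The key point is that Equation~(\ref{giProof2}) from the proof of Theorem~\ref{exo-balanced} already has precisely the desired shape: for $x \in \DeltaA$,
$$
f_i(x) = g(x_{[i]}) + \sum_{B \in \calA \setminus \{[i]\}} d_B([i])\, h(x_B - x_{[i]}).
$$
Since $i \in A$ gives $[i] = A$ and $x_{[i]} = x_A$, substituting $f_A(x) = f_i(x)$ yields Equation~(\ref{gRestrictedExo}) at once.

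The only step requiring any care is why the sum omits the self-class term $B = A$, and this is where the hypothesis $f \in \Dzero$ is essential. In the unrestricted expansion~(\ref{giProof}) the $B = [i]$ summand is $d_A(i)\, h(x_{[i]} - x_{[i]}) = d_A(i)\, h(0)$, and because every $f \in \Dzero$ has $h(0) = 0$, this term vanishes regardless of the value of $d_A(i)$. This is important precisely because $d_A(A)$ need not even be defined when $\calA$ is strictly exo-balanced, so the argument must discard the self-coupling term rather than evaluate it. Only the off-diagonal degrees $d_B(A)$ with $B \neq A$ survive, and the exo-balanced condition guarantees each of these depends only on the class $A$, not on the chosen representative $i$, which is what allows $d_B(i)$ to be written as $d_B(A)$ in~(\ref{gRestrictedExo}).

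Finally, the counting statement follows immediately: the right-hand side of~(\ref{gRestrictedExo}) depends on $\calA$ only through the integers $d_B(A)$ indexed by ordered pairs $(A, B)$ of distinct classes, of which there are $|\calA|(|\calA| - 1)$, and as in Proposition~\ref{qSystemBal} the collection of functions $f_A$ determines $f|_{\DeltaA}$. I expect no genuine obstacle here, since the substantive work was already carried out inside Theorem~\ref{exo-balanced}; the subtlety is purely the bookkeeping that correctly eliminates the self-coupling term via $h(0) = 0$.
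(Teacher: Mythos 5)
Your proposal is correct and follows essentially the same route as the paper, whose proof simply cites Equation~(\ref{giProof2}) and the definition of $d_B(A)$ for exo-balanced partitions. Your added remarks — that well-definedness of $f_A$ comes from Theorem~\ref{exo-balanced}, and that the self-coupling term is discarded via $h(0)=0$ rather than evaluated (since $d_A(A)$ may be undefined) — are exactly the content already built into the derivation of~(\ref{giProof2}), just spelled out more explicitly.
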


\begin{proof}
This follows from Equation (\ref{giProof2}) and the definition of $d_B(A)$ 
for exo-balanced partitions. 
\end{proof}

Now we give examples of systems of ODEs defined by pair-coupled vector fields.  This is the main use of 
Propositions~\ref{qSystemBal} and \ref{qSystemBal2}.

\begin{example}
The singleton partition $\calA = \{\mathcal{C}\}$ is always exo-balanced,  with $\DeltaA = \{(a, a, \ldots, a)\}$.
The system of ODEs $\dot x = f(x)$ with $f \in \Dzero$ restricted to $\DeltaA$ is
\begin{equation}
\label{reducedExoSys}
\dot a = g(a) .
\end{equation}
That is, solutions to this ODE in $V$ are in bijective correspondence to solutions of $\dot x = f(x)$ in $\DeltaA$.
An example of a strictly exo-balanced singleton partition is Figure~\ref{P3Fig}(iii).
If the singleton partition is balanced, as in Figure~\ref{C10_1_2Fig}(i), then $d_\calC (\calC)$ is defined, and the ODE system with $f \in \D$ is
$$\dot a = g(a) + d_\calC (\calC) h(0).
$$
\end{example}

\begin{example}
\label{abExosystems}
\begin{figure}
\begin{tabular}{ccccc}
\includegraphics[scale = 1]{figs/P3ii.pdf} & \quad &
\includegraphics[scale = 1]{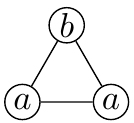} & \quad &
\includegraphics[scale = 1]{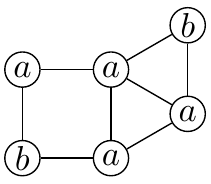} \\
(i) & & (ii) & & (iii)
\end{tabular}
\caption{
\label{3exo}
Three exo-balanced partitions with the same inter-class degrees.  The partition on the right is strictly exo-balanced.
}
\end{figure}

The three exo-balanced subspaces shown in Figure \ref{3exo} all have $d_A(B) = 2$ and $d_B(A) = 1$.  
Thus, if $f\in \Dzero$ the
restriction of $\dot x = f(x)$ to each of these exo-balanced subspaces is
\begin{align}
\begin{split}
\label{3exoSys}
\dot a & = g(a) + h(b-a) \\
\dot b & = g(b) + 2 h(a-b).
\end{split}
\end{align} 
The partition of $G = P_3$ in Figure~\ref{3exo}(i) is also balanced, with $d_A(A) = d_B(B) = 0$, 
so the restriction of the ODE system with $f \in \D$ is also Equation~(\ref{3exoSys}).
The balanced partition of $G = C_3$ in Figure~\ref{3exo}(ii) has $d_A(A) = 1$, $d_B(B) = 0$, so the restricted ODE is
\begin{align*}
\dot a &= g(a) + h(0) + h(b-a) \\
\dot b &= g(b) + 2 h(a-b)
\end{align*} 
if $f \in \D$.
The partition in Figure~\ref{3exo}(iii) is not balanced, so $\DeltaA$ is not invariant for all $f \in \D$.
\end{example}

\section{Matched Partitions and Anti-Synchrony}
When the vector field $f$ in Equation~(\ref{gi}) is odd, more invariant subspaces exist.
For example, the trivial subspace $x = 0$ is invariant for any odd $g$ and $h$.  
When $f$ is odd, the system (\ref{gi}) is equivariant under the group $\aut(G)\times \Z_2$,
where the $\Z_2$ action is $x \mapsto -x$.  The fixed point subspaces of the $\aut(G) \times \Z_2$
action are $f$-invariant for all $f \in \Dodd$ \cite{NSS3}.
For many graph networks there are invariant subspaces for all $f \in \Dodd$ that are \emph{not} fixed point subspaces.
To describe these additional
invariant subspaces, we first introduce the notion of a matched partition.

\subsection{Odd-Balanced Partitions}

\begin{definition}
An \emph{odd partition} of a finite set $\mathcal{C}$ is a set $\calA$ containing an odd number of pairwise disjoint subsets of $\mathcal{C}$ such that $\cup\calA=\mathcal{C}$. 
\end{definition}

Note that $\calA$ may contain the empty set. The number of odd partitions of $\mathcal{C}$ is the same as the number of partitions of $\mathcal{C}$. In fact, a partition with 
an even number of classes can be transformed into an odd partition by the inclusion of the empty set.

\begin{definition}
\label{matched}

A {\em matched partition} $(\calA, m)$ is an odd partition $\calA$ with a {\em matching function} $m: \calA \rightarrow \calA$
such that $m^{-1}=m$, $m$ has exactly one fixed point, and $\emptyset\in\calA$ implies $m(\emptyset)=\emptyset$. 
The fixed point of $m$ is denoted $A_0$. We use the notation $-A:= m(A)$ when the matching function is unambiguous.
\end{definition}

In other words, a matched partition $(\calA, m)$ satisfies  $-(-A) = A$ 
for all $A\in\calA$, $-A_0 = A_0$, and $-A \neq A$ for all 
$A \in \calA\setminus \{A_0\}$.  
There are two possibilities: either $A_0 = \emptyset$ or $A_0 \neq \emptyset \not \in \calA$.
Every matched partition corresponds to a subspace of $\Rall$ as follows.

\begin{definition}
\label{DeltaAmDef}
Given a matched partition $(\calA, m)$ of the cell set of a graph $G$, we define the {\em polydiagonal subspace} 
$$
\Delta_{(\calA, m)} := \{x \in \Rall \mid
x_i = - x_j \mbox{ if } [i] = - [j]
\}.
$$
For $x\in \DeltaAm$  we define $x_{[i]}:=x_i$. 
\end{definition}
\begin{rem}
The notation $x_{[i]} = x_i$ for $x \in \DeltaAm$ is well-defined since
$[i] = [i'] = -[j]$ implies $x_i = x_{i'} = -x_j$.
Note that $x_{-A} = - x_A$ motivates the definition $-A := m(A)$. 
Furthermore, if $i \in A_0$ then $x_i = 0$ since $[i] = -[i] = A_0$.
Note that $x \in \Delta_{(\calA, m)}$ is determined by its value on
just one component of each matched pair in $\calA$.
This suggests the following definition.
\end{rem}

\begin{definition}
A \emph{cross section of} a matched partition $(\calA, m)$ is a maximal subset $\calAtilde$ of $\calA\setminus\{A_0\}$ satisfying $\calAtilde \cap m(\calAtilde) = \emptyset$.
\end{definition}

In other words, a cross section $\widetilde \calA$ contains exactly one element
from each two-element subset $\{A, -A \}\subseteq\calA$, and $A_0 \not \in \calAtilde$. 
Note that $|\calAtilde|=( |\calA| - 1)/2$,
the number of cross sections is $2^{|\calAtilde|}$, and
$\dim(\Delta_{(\calA, m)})=k |\calAtilde|$.

\begin{definition}
\label{odd-balancedDef}
An \emph{odd-balanced} partition is a matched partition $(\calA, m)$ of the cell set $\mathcal{C}$ of a graph network such that for all 
$A \in \calA$ and all $i, j \in \mathcal{C}$
\begin{enumerate}
\item
$d_A(i) = d_{-A}(j)$ whenever $[i]  \neq A_0$ and $A \neq [i] = -[j]$; 
\item
$d_A(i) = d_{-A}(i)$ whenever
$[i] = A_0 \neq A$.
\end{enumerate}
In this case we define the degrees 
$d_A([i]) := d_A(i)$ 
if $[i] \neq A$ and 
$[i] \neq A_0$.
An \emph{odd-balanced subspace} is the polydiagonal subspace
$\DeltaAm$ of an odd-balanced partition $\calAm$.
\end{definition}

\begin{prop}
\label{oddBalWellDef}
Let $\calAm$ be an odd-balanced matched partition, with a cross section $\calAtilde$.  
Then $d_A(B)$ is well-defined for all $A, B \in \calA$ satisfying $A \neq B \neq A_0$, giving 
$4|\calAtilde|^2$ degrees.
Furthermore, $d_{-A}(-B) = d_{A}(B)$ whenever $d_A(B)$ is defined.
Thus all of the degrees can be determined by specifying 
$2|\calAtilde|^2$
degrees $d_A(B)$ with $A \in \calA$, $B \in \calAtilde$ , and $A \neq B$, for any 
cross section $\calAtilde$.
\end{prop}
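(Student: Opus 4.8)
The plan is to unpack the two conditions in Definition~\ref{odd-balancedDef} and verify each claim by a short combinatorial argument, using throughout that the only class which may be empty is $A_0$ (indeed $\emptyset\in\calA$ forces $m(\emptyset)=\emptyset$, hence $\emptyset=A_0$, so every class other than $A_0$ is nonempty).

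First I would establish well-definedness. Fix $A,B\in\calA$ with $A\neq B\neq A_0$. Since $B\neq A_0$, the matched class $-B$ is distinct from $B$ and from $A_0$, hence nonempty, so I can pick a \emph{bridge} element $j\in -B$, that is, $-[j]=B$. Now for any representatives $i,i'$ with $[i]=[i']=B$, condition (1) of Definition~\ref{odd-balancedDef} applies to both pairs $(i,j)$ and $(i',j)$, since the hypotheses $[i]\neq A_0$ and $A\neq[i]=-[j]$ hold (and likewise for $i'$). This gives $d_A(i)=d_{-A}(j)=d_A(i')$, so $d_A(i)$ depends only on $B=[i]$ and $d_A(B)$ is well-defined. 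For the count, $B$ ranges over the $|\calA|-1=2|\calAtilde|$ classes in $\calA\setminus\{A_0\}$ and, for each such $B$, $A$ ranges over the $|\calA|-1=2|\calAtilde|$ classes in $\calA\setminus\{B\}$, yielding $4|\calAtilde|^2$ degrees.

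Next I would prove the symmetry $d_{-A}(-B)=d_A(B)$, which falls out of the same application of condition (1): for $i\in B$ and the bridge $j\in -B$, the relation $d_A(i)=d_{-A}(j)$ reads precisely $d_A(B)=d_{-A}(-B)$, and the right side is defined because $A\neq B\neq A_0$ forces $-A\neq -B\neq A_0$. Finally, for the reduction I would use this symmetry to fold the $4|\calAtilde|^2$ degrees onto the cross section. Given any degree $d_A(B)$ with $A\neq B\neq A_0$, the cross section $\calAtilde$ contains exactly one of $B,-B$. If $B\in\calAtilde$ the degree is already listed; otherwise $-B\in\calAtilde$, and $d_A(B)=d_{-A}(-B)$ rewrites it with second argument $-B\in\calAtilde$ and first argument $-A$, still distinct from $-B$ since $A\neq B$. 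Thus every degree is determined by the values $d_A(B)$ with $A\in\calA$, $B\in\calAtilde$, $A\neq B$; here $B$ has $|\calAtilde|$ choices and $A$ has $|\calA|-1=2|\calAtilde|$, giving $2|\calAtilde|^2$ degrees (the folding is two-to-one, consistent with $4|\calAtilde|^2=2\cdot 2|\calAtilde|^2$). The only point that genuinely requires care is the existence of the bridge element, which rests on the nonemptiness of matched classes other than $A_0$ noted at the outset; the remainder is bookkeeping with the involution $m$.
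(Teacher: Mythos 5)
Your proof is correct and follows essentially the same route as the paper's: apply Condition (1) of Definition~\ref{odd-balancedDef} with a fixed $j\in -B$ to get $d_A(i)=d_{-A}(j)=d_A(i')$, read off the symmetry $d_A(B)=d_{-A}(-B)$ from the same identity, and count. The one thing you add that the paper leaves implicit is the justification that the bridge element $j$ exists (every class other than $A_0$ is nonempty since only the empty set can be the fixed point of $m$), which is a worthwhile point of care; your counting is organized slightly differently but is equivalent.
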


\begin{proof}
If $A \neq [i] = [i'] = -[j] \neq A_0$, then $d_A(i) = d_{-A}(j) = d_A(i')$.  Thus
 $d_A([i])$ is well-defined 
for all $A \neq [i] \neq A_0$.
The same calculation shows that $d_A([i]) = d_{-A}(-[i])$, and thus $d_A(B) = d_{-A}(-B)$, when these are defined. The statement uses $B$ in place of $[i]$.
There are $|\calA| \cdot (|\calA| - 1)$ ordered pairs $(A, B) \in \calA \times \calA$  with $B \neq A_0$.
There are $|\calA| - 1$ pairs $(A, A)$ with $A \neq A_0$.  Thus there are  $|\calA| \cdot (|\calA| - 1) - (|\calA| - 1) = (|\calA| - 1)^2 = 4 |\calAtilde|^2$
degrees $d_A(B)$ defined.
The pairs $(A, B)$ and $(-A, -B)$ are always distinct when $d_A(B)$ is defined, since $B \neq A_0$.
Thus half of the degrees, $d_A(B)$ with $B \in \calAtilde$, determine the rest of the degrees.
\end{proof}

\begin{rem}
Note that $d_A(A)$ and $d_A(A_0)$ are not defined for odd-balanced partitions.
Figure~\ref{matchedP3-6Fig}(iii) shows an odd-balanced partition where $d_A(A)$ is not well-defined,
as described in Example~\ref{matchedP6ex}.
Figure~\ref{matchedPawFig}(ii) shows an odd-balanced partition where
$d_{A}(A_0)$
is not well-defined, as described in Example~\ref{matchedPawEx}.
\end{rem}

\begin{figure}
\begin{tabular}{ccccc}
\includegraphics[scale = 1]{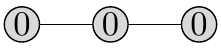} & \quad &
\includegraphics[scale = 1]{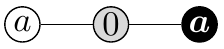} & \quad &
\includegraphics[scale = 1]{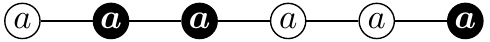} \\
(i) & & (ii) & & (iii)
\end{tabular}
\caption{Some odd-balanced partitions with a choice of cross section.  A cell  $i \in A_0$ has $x_i = 0$, 
indicated by $0$ in a gray ball.  A cell $i \not \in A_0$
is indicated
with $a$ in a white ball 
if $x_i = a$,
and $a$ in a black ball if
$x_i  = -a$.
By convention, white balls are in classes in $\calAtilde$.
For (ii) and (iii), the matched partition is $\calA = \{A, -A, A_0\}$, and the cross section is  $\calAtilde = \{ A \}$.
}
\label{matchedP3-6Fig}
\end{figure}

\begin{example}
The singleton matched partition $\{A_0\}$, with $A_0 = \mathcal{C}$ is an odd-balanced partition for any graph $G$
with cell set $\mathcal{C}$ since the conditions in Definition \ref{odd-balancedDef} are vacuously true.
The corresponding odd-balanced subspace is the $0$-dimensional subspace $\{ 0 \}$.
The cross section is $\calAtilde = \emptyset$.
An example is shown in Figure~\ref{matchedP3-6Fig}(i).  
\end{example}

\begin{example}
The matched partition of the path $P_3$ in Figure~\ref{matchedP3-6Fig}(ii) has $A = \{1\}$, $-A = \{3\}$, and $A_0 = \{2\}$.
This partition is odd-balanced: Condition (1) of Definition \ref{odd-balancedDef}
holds since $d_A(3) = d_{-A}(1)$, and Condition (2) holds since $d_A(2) = d_{-A}(2)$.
Proposition~\ref{oddBalWellDef} says that
 two degrees, $d_{-A}(A) = 0$ and $d_{A_0}(A) = 1$, determine the other degrees: $d_A(-A) = 0$ and $ d_{A_0}(-A) = 1$.
\end{example}

\begin{example}
\label{matchedP6ex}
The matched partition of the path $P_6$ in Figure~\ref{matchedP3-6Fig}(iii)
has $A = \{1, 4, 5\}$, $-A = \{2, 3, 6\}$,
and $A_0 = \emptyset$.
This partition is odd-balanced with $ d_{-A}(A) = 1$ and $d_{A_0}(A) = 0$.
Proposition~\ref{oddBalWellDef} gives $d_A(-A) = 1$ and  $ d_{A_0}(-A) = 0$.
Note that $d_A(1) = 0$ while $d_A(4) = 1$.  This shows that $d_A(A)$ is 
not well-defined for this partition.
This invariant subspace is discussed in \cite{EpsteinGolubitsky} and is a discrete
analog of the hidden symmetry described for ODEs in \cite{Warwick_1991}.
Note that this invariant subspace is not a fixed point subspace of the 
$\aut(P_6) \times \Z_2$ action on the space of functions defined on the cells of $P_6$ \cite{NSS3}.
\end{example}

\begin{figure}
\begin{tabular}{ccccc}
\includegraphics[scale = 1]{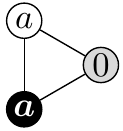} & \quad &
\includegraphics[scale = 1]{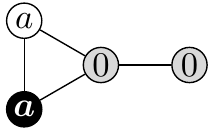} & \quad & 
\includegraphics[scale = 1]{figs/pawA.pdf}
\\
(i) & & (ii) & & (iii) 
\end{tabular}
\caption{
Two odd-balanced partitions  with the same set of degrees defined in \ref{odd-balancedDef}.  Figure (ii) shows why
we do not define $d_A(A_0)$.}
\label{matchedPawFig}
\end{figure}

\begin{example}
\label{matchedPawEx}
The two odd-balanced subspaces shown in Figure~\ref{matchedPawFig} have the same set of degrees,
$d_{-A}(A) = d_{A_0}(A) = 1$ and the two following from Proposition~\ref{oddBalWellDef}.
For the graph network in Figure~\ref{matchedPawFig}(ii), $d_A(3) =1$ and $d_A(4)=0$. 
Thus, $d_A(i)$ is not constant for all $i \in A_0$.  
This example shows that $d_A(A_0)$ is not well-defined in general.
\end{example}

\begin{figure}
\begin{tabular}{ccccc}
\includegraphics[scale = 1]{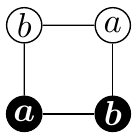} & \quad &
\includegraphics[scale = 1]{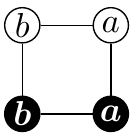}  & \quad &
\includegraphics[scale = 1]{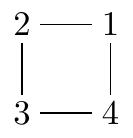} \\
(i) && (ii) & & (iii)
\end{tabular}
\caption{Two odd-balanced matched partitions $(\calA, m_1)$ and $(\calA, m_2)$ with the same partition
but a different matching function. }
\label{matchedC4Fig}
\end{figure}

\begin{example}
The two odd-balanced subspaces shown in Figure~\ref{matchedC4Fig} have the same partition
$\calA = \{ \{1\}, \{2\}, \{3\}, \{4\}, \emptyset \}$, but different matching functions.
 In both cases, $A = \{1\}$ and $B = \{2\}$.
The matching in Figure (i) is defined by $m_1(\{1\}) = \{3\}$,
$m_1(\{2\}) = \{4\}$ and $m_1(\emptyset) = \emptyset$. 
The matching in Figure (ii) is defined by $m_2(\{1\}) = \{4\}$,
$m_2(\{2\}) = \{3\}$ and $m_2(\emptyset) = \emptyset$.
\end{example}

\begin{thm}
\label{odd-balancedThm}
Let $G = (\mathcal{C}, \mathcal E)$ be a graph network, and $(\calA, m)$ be a matched partition of $\mathcal{C}$. 
The subspace $\DeltaAm$ is $\Dodd$-invariant
if and only if $(\calA, m)$ is odd-balanced.
\end{thm}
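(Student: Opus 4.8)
The plan is to express invariance of $\DeltaAm$ as a single pointwise identity and then prove each implication by analyzing it. First I would record the analogue of Equation~(\ref{giProof}) for a matched partition: for $x \in \DeltaAm$ and any cell $i$, grouping the neighbors of $i$ by equivalence class gives
$$
f_i(x) = g(x_{[i]}) + \sum_{B \in \calA} d_B(i)\, h(x_B - x_{[i]}),
$$
where $d_B(i) = 0$ for any empty class, so such classes are harmless. By Definition~\ref{DeltaAmDef}, $\DeltaAm$ is $\Dodd$-invariant if and only if $f_i(x) = -f_j(x)$ for every $x \in \DeltaAm$ and every pair of cells with $[i] = -[j]$; the degenerate case $j = i$, which occurs exactly when $[i] = A_0$, encodes the requirement that the $A_0$-components of $f(x)$ vanish. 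Using that $g$ and $h$ are odd, that $x_{-B} = -x_B$ on $\DeltaAm$, and reindexing the sum by $B \mapsto -B$, I would derive the master identity
$$
f_i(x) + f_j(x) = \sum_{B \in \calA} \big( d_B(i) - d_{-B}(j) \big)\, h(x_B - x_{[i]}),
$$
valid whenever $[i] = -[j]$ (including $i = j$). This identity is the computational core of both directions.

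For the backward direction, suppose $\calAm$ is odd-balanced, and fix $f \in \Dodd$ and $x \in \DeltaAm$. If $[i] = -[j] \neq A_0$, then Definition~\ref{odd-balancedDef}(1) gives $d_B(i) = d_{-B}(j)$ for every $B \neq [i]$, so every coefficient in the master identity vanishes except at $B = [i]$, where the argument $x_{[i]} - x_{[i]} = 0$ annihilates the term since $h$ is odd; hence $f_i(x) + f_j(x) = 0$. If $[i] = A_0$, applying the identity with $j = i$ and Definition~\ref{odd-balancedDef}(2) yields $2 f_i(x) = 0$, so the $A_0$-components of $f(x)$ are zero. Together these show $f(x) \in \DeltaAm$.

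For the forward direction, suppose $\DeltaAm$ is $\Dodd$-invariant, so the right-hand side of the master identity vanishes for every odd $h$ and every $x \in \DeltaAm$. I would fix a single base point $x \in \DeltaAm$ in \emph{general position}, meaning the free values $x_D$ for $D \in \calAtilde$ are chosen so that all arguments $x_B - x_{[i]}$ with $B \in \calA$ are nonzero, pairwise distinct, and no one is the negative of another, apart from the relations forced by the structure (the vanishing argument at $B = [i]$, and, when $x_{[i]} = 0$, the forced identity $x_{-B} - x_{[i]} = -(x_B - x_{[i]})$). Such a point exists because only finitely many affine equalities must be avoided in the parameter space $\{x_D \in \R^k\}_{D \in \calAtilde}$. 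Given a target class $A$, I would then select an odd $h$ that is nonzero at $x_A - x_{[i]}$ and vanishes at every other argument and its negative, so that substitution into the master identity isolates a single coefficient. When $[i] \neq A_0$ I would arrange $x_{[i]} \neq 0$; this separates the arguments coming from $B$ and from $-B$, and isolation yields $d_A(i) = d_{-A}(j)$ for each $A \neq [i]$, which is Definition~\ref{odd-balancedDef}(1). When $[i] = A_0$ the value $x_{[i]} = 0$ is forced, so the $B$ and $-B$ terms coincide under an odd $h$; but they reinforce rather than cancel, and isolating at the level of the cross section $\calAtilde$ yields $d_A(i) = d_{-A}(i)$, which is Definition~\ref{odd-balancedDef}(2).

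I expect the forward direction to be the main obstacle, and the difficulty lies entirely in the test-point construction. The antisymmetry constraint $x_{-B} = -x_B$ built into $\DeltaAm$, combined with the oddness of $h$, tends to collapse the contributions of $B$ and $-B$; a naive choice such as setting one class to a vector $v$ and the rest to $0$ (as in the proof of Theorem~\ref{exo-balanced}) produces only a combined equation rather than the individual degree equalities. The resolution is to break this symmetry deliberately by using a base value $x_{[i]} \neq 0$ in the case $[i] \neq A_0$, while observing that the unavoidable collapse in the case $[i] = A_0$ is harmless because the paired terms add.
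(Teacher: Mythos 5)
Your proposal is correct and follows essentially the same route as the paper: the same identity for $f_i(x)+f_j(x)$ when $[i]=-[j]$ (the paper's Equations~(\ref{giNotA0}) and (\ref{giA0})), the same backward direction, and the same coefficient-isolation strategy via a chosen test point and an odd $h$ in the forward direction. The only difference is that you replace the paper's explicit test point (components $-2v$, $-v$, $3v$, making the arguments of $h$ distinct multiples of $v$) with a general-position argument; this is a valid and equivalent way to achieve the separation of arguments needed for an odd $h$ to isolate a single degree coefficient.
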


\begin{proof}
First, we develop some formulas assuming $x\in\Delta_{(\mathcal{A},m)}$
and $f\in \Dodd$. Equation~(\ref{giProof}) becomes
\begin{equation}
f_{i}(x)=g(x_{[i]})+\sum_{B\in\mathcal{A}\setminus\{[i]\}}d_{B}(i)h(x_{B}-x_{[i]})\label{giNew}
\end{equation}
since $h(0)=0$, and the term with $B=[i]$ contributes nothing.

If $[j]=-[i]$, then 
\[
f_{j}(x)=-g(x_{[i]})-\sum_{B\in\mathcal{A}\setminus\{[i]\}}d_{-B}(j)h(x_{B}-x_{[i]})
\]
since $x_{[j]}=-x_{[i]}$, $x_{-B}=-x_{B}$, and $g$ and $h$
are odd. Thus, adding the equations we get 
\begin{equation}
f_{i}(x)+f_{j}(x)=\sum_{B\in\mathcal{A}\setminus\{[i]\}}\bigl(d_{B}(i)-d_{-B}(j)\bigr)h(x_{B}-x_{[i]}).\label{giNotA0}
\end{equation}

If $[i]=A_{0}$ then $x_{[i]}=0$, so combining pairs of terms in
Equation (\ref{giNew}) results in 
\begin{equation}
f_{i}(x)=\sum_{B\in\tilde{\mathcal{A}}}\bigl(d_{B}(i)-d_{-B}(i)\bigr)h(x_{B}),\label{giA0}
\end{equation}
where $\tilde{\mathcal{A}}$ is a cross section.

For the backward direction of the proof, assume that $(\mathcal{A},m)$
is odd-balanced, $f\in\Dodd$, and $x\in\Delta_{(\mathcal{A},m)}$.
We show that $f(x)\in\Delta_{(\mathcal{A},m)}$ by verifying that
$f_{i}(x)+f_{j}(x)=0$ whenever $[i]=-[j]$. If $i\not\in A_{0}$,
then every term in the sum of Equation~(\ref{giNotA0}) is zero,
while if $i\in A_{0}$, then every term in the sum of Equation ~(\ref{giA0})
is zero.

Conversely, assume that $\Delta_{(\mathcal{A},m)}$ is $f$-invariant
for all $f\in\Dodd$. We must show that $(\mathcal{A},m)$
is odd-balanced. Fix a nonzero $v\in\mathbb{R}^{k}$. To verify Condition
(1) of Definition \ref{odd-balancedDef}, assume that $A\ne[i]=-[j]\ne A_{0}$.
We consider several cases based on the choice of $A$. In each case
we use the $f$-invariance of a carefully chosen $x\in\Delta_{(\mathcal{A},m)}$
with an appropriate $h$. To define $x$ we only need to determine
the value of $x_{B}$ for $B$ in a cross section $\tilde{\mathcal{A}}$.
We want $\tilde{\mathcal{A}}$ to always contain $[i]$ and contain
$A$ whenever possible.  
A cross section never contains $A_0$, and it cannot contain both $[i]$ and $-[i] = [j]$.
That is,
\[
\{A, [i]\} \setminus\{A_0, [j] \}\subseteq\tilde{\mathcal{A}}.
\]
Such $\tilde{A}$ always exists. With the choice of 
\[
x_{B}:=\begin{cases}
-2v & \mbox{if }B=[i]\\
\; -v & \mbox{if } B\in\tilde{\mathcal{A}}\setminus\{A, [i]\}\\
\; \, \, 3v & \mbox{if } B\in\{A\}\setminus\{A_0, [j] \}
\end{cases}
\]
and the fact that $x_{-B} = -x_B$, Equation~(\ref{giNotA0}) becomes
\begin{align}
0= & f_{i}(x)+f_{j}(x)\nonumber \\
= & \bigl(d_{A_{0}}(i)-d_{A_{0}}(j)\bigr)h(2v)+\bigl(d_{[j]}(i)-d_{[i]}(j)\bigr)h(4v)
\label{eq:long}\\
 & +\sum_{B\in\{A\}\setminus\{A_0, [j]\}}
 \left(\bigl(d_{A}(i)-d_{-A}(j)\bigr)h(5v)-\bigl(d_{-A}(i)-d_{A}(j)\bigr)h(v)\right)
 \nonumber \\
 & +\sum_{B\in\tilde{\mathcal{A}}\setminus\{[i],A\}}\left(\bigl(d_{B}(i)-d_{-B}(j)\bigr)h(v)+\bigl(d_{-B}(i)-d_{B}(j)\bigr)h(3v)\right).
 \nonumber 
\end{align}

First we verify Condition (1) of Definition \ref{odd-balancedDef}.
If $A=A_{0}$, then we choose an odd $h$ such that $h(2v)\neq0$
but $h(\ell v)=0$ for $\ell\in\{1,3,4,5\}$.   
Note that in this case and the next, the first sum in Equation~(\ref{eq:long}) has no terms.
If $A=[j]$, then we choose an odd $h$ such that $h(4v)\neq0$
but $h(\ell v)=0$ for $\ell\in\{1,2,3,5\}$.
If $A\in\mathcal{A}\setminus\{A_{0},[i], [j]\}$, then we choose
an odd $h$ such that $h(5v)\neq0$ but $h(\ell v)=0$
for $\ell\in\{1,2,3,4\}$.  
In each case, Equation (\ref{eq:long}) simplifies to $d_{A}(i)=d_{-A}(j)$.

To verify Condition (2) of Definition \ref{odd-balancedDef}, assume
that $[i]=A_{0}$ and $A\in\mathcal{A} \setminus \{A_0\}$. Let $x\in\Delta_{(\mathcal{A},m)}$
satisfy $x_B = 0$ if $B \in \calA \setminus \{A, -A\}$.
Equation~(\ref{giA0}) implies 
\[
0=f_{i}(x)=\bigl(d_{A}(i)-d_{-A}(i)\bigr)h(x_{A}).
\]
Choose $x_A$ and $h$ such that $h(x_{A}) \ne 0$. Hence $d_{A}(i)=d_{-A}(i)$. 
\end{proof}

\begin{rem}
\label{OddBalancedRem}
A linear algebra calculation similar to that described in
Remarks~\ref{AdjBalanced} and \ref{LapExo-balanced} is not apparent in this case,
because a nonlinear $f$ is needed to show that $\calAm$ is
odd-balanced in the backward direction of the proof.
\end{rem}

\subsection{Linear-Balanced Partitions}

Recall that $f\in \Dl$ when $g$ is odd and the coupling function $h$ is linear (an hence odd).
For some graph networks, there are subspaces that are invariant for all $f \in \Dl$ but not for all $f \in \Dodd$.
When $f \in \Dl$ and $x \in \DeltaAm$,
then the pair-coupled system becomes
\begin{align}
\nonumber
f_i(x)  & = g(x_{[i]}) + \sum_{B \in \calA\setminus \{[i]\}} d_B(i) \left ( h(x_B) - h (x_{[i]})  \right )
\\ & \nonumber
= g(x_{[i]}) - 2 d_{-[i]}(i) h(x_{[i]}) + \sum_{B\in \calA \setminus \{[i], -[i]\}} d_B(i) 
\left (h(x_B) - h(x_{[i]}) \right )
\\ &\label{giLinear}
= g(x_{[i]}) -  \left  (2 d_{-[i]}(i) + \sum_{B \in \calA \setminus \{[i], -[i] \}} d_B(i) \right )
h(x_{[i]} ) + 
\sum_{B\in \calA \setminus \{[i], -[i]\}} d_B(i) h(x_B).
\end{align}
The coefficient of $h(x_{[i]})$ in Equation (\ref{giLinear}) is almost the degree of $i$, except
edges coming in from $[i]$ do not count, and edges coming in from $-[i]$ count twice.
Equation (\ref{giLinear}) motivates the following.

\begin{definition}
\label{exoDeltaDef}
Given a matched partition $(\calA, m)$ of the cell set $\mathcal{C}$ of a graph network,
we define the 
\emph{linear degree} of a cell $i$ in $\mathcal{C}\setminus A_0$ to be 
\begin{equation}
\label{linearDegreeDef}
e(i) :=2 d_{-[i]}(i) + \sum_{B \in \calA \setminus \{[i], -[i] \}} d_B(i)
\end{equation}
and the \emph{degree difference} of $i\in \mathcal{C}$ with $A \in \calA$ to be
$$
\delta_A(i) = d_A(i) - d_{-A}(i).
$$
\end{definition}

Note that for $i$ in $\mathcal{C}\setminus A_0$ the linear degree is $e(i) = |N(i)| - \delta_{[i]}(i)$.
Also note that $\delta_{A_0}(i)=0$ for all $i$ because $A_0=-A_0$.
With these definitions, we can write Equation (\ref{giLinear}) compactly.
Suppose $\calAtilde$ is a cross section of the matched partition $(\calA, m)$, $i \in \mathcal{C}$,
$f \in \Dl$, and $x \in \DeltaAm$.
Then
\begin{equation}
\label{giLin}
f_i(x) =  g(x_{[i]}) -  e(i) h(x_{[i]}) +
\sum_{B \in \calAtilde \setminus \{[i]\} } \delta_B(i) h (x_B).
\end{equation}

\begin{definition}
\label{linBalDef}
A {\em linear-balanced partition} is a
matched partition $(\calA, m)$  such that for all $i,j\in \mathcal{C}$ and all $A\in \calA$
\begin{enumerate}
\item 
$e(i) = e(j)$ whenever $[i] = -[j] \neq A_0$;
\item
$\delta_A(i) = -\delta_A(j)$ whenever $[i] = -[j] $ and $[i] \neq A \ne [j] $.
\end{enumerate}
In this case we define 
$e([i]) := e(i)$
if $[i]  \neq A_0$, and 
$\delta_A([i]) := \delta_A(i)$ 
if 
$[i] \neq A \neq -[i]$.
We call a matched partition \emph{strictly linear-balanced} provided it is linear-balanced but
not odd-balanced.
A \emph{linear-balanced subspace} is the polydiagonal subspace
$\DeltaAm$ of a linear-balanced partition $\calAm$.
\end{definition}

Note that $e([i])$ and $\delta_A([i])$ are well-defined because
$e(i) = e(i') = e(j)$ whenever $[i] = [i'] = -[j] \neq A_0$, 
and $\delta_A(i) = \delta_A(i') = - \delta_A(j)$ 
whenever $[i] = [i'] = -[j] $, and $[i] \neq A \neq [j]$.
Trivial cases are excluded because a matched partition cannot have $[i] \neq A_0$ and $-[i] = A_0$.

For a choice of cross section $\calAtilde$ of a linear-balanced partition, 
the values of $e(A)$ for all $A \in \calAtilde$, and $\delta_A(B)$ for all $A \ne B$ in $\calAtilde$ determine all of 
the other linear degrees and degree differences because of the identities
$e(A) = e(-A)$ and $\delta_A(B) = - \delta_{-A}(B) = \delta_{-A}(-B) = - \delta_A(-B)$.

\begin{prop}
\label{odd2lin}
If $\calAm$ is odd balanced, then $\calAm$ is linear-balanced. 
\end{prop}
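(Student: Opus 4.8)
The plan is to show that the odd-balanced conditions of Definition~\ref{odd-balancedDef} directly imply the linear-balanced conditions of Definition~\ref{linBalDef}. The key observation is that both the linear degree $e(i)$ and the degree difference $\delta_A(i)$ are built entirely out of the elementary degrees $d_B(i)$, so I would translate each odd-balanced hypothesis into a statement about these $d_B(i)$ and then recombine them into the linear-balanced quantities.

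First I would verify Condition (2) of linear-balanced, since it is the cleaner of the two. Assume $[i] = -[j]$ with $[i] \neq A \neq [j]$. I need $\delta_A(i) = -\delta_A(j)$, i.e. $d_A(i) - d_{-A}(i) = -(d_A(j) - d_{-A}(j))$. Here I must be careful about the edge cases in the odd-balanced definition: Condition (1) of Definition~\ref{odd-balancedDef} gives $d_A(i) = d_{-A}(j)$ precisely when $[i] \neq A_0$ and $A \neq [i] = -[j]$, while Condition (2) handles the case $[i] = A_0$. Applying the appropriate odd-balanced condition to the pair $(A,i)$ and $(j)$ and then to the matched pair $(-A, i)$ and $(j)$ should yield $d_A(i) = d_{-A}(j)$ and $d_{-A}(i) = d_A(j)$; subtracting these two equalities gives exactly $\delta_A(i) = -\delta_A(j)$. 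When $[i] = -[j] = A_0$ the condition is vacuous since then $[i] = [j]$ and we cannot have $[i] \neq A \neq [j]$ forcing anything new, so I would dispatch that degenerate situation separately using $\delta_{A_0}$ remarks already noted after Definition~\ref{exoDeltaDef}.

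Next I would handle Condition (1), $e(i) = e(j)$ when $[i] = -[j] \neq A_0$. Using the identity $e(i) = |N(i)| - \delta_{[i]}(i)$ recorded just after Definition~\ref{exoDeltaDef} is the slickest route: I would argue that $|N(i)| = |N(j)|$ follows from summing the odd-balanced equalities $d_B(i) = d_{-B}(j)$ over all relevant $B$ (each neighbor of $i$ in class $B$ corresponds to a neighbor of $j$ in class $-B$), so the total degrees agree; and then show $\delta_{[i]}(i) = \delta_{[j]}(j)$ by noting $[j] = -[i]$ and applying the $\delta$-identity $\delta_{[i]}(i) = -\delta_{[i]}(j) = \delta_{-[i]}(j) = \delta_{[j]}(j)$ that comes out of the Condition~(2) work. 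Subtracting gives $e(i) = e(j)$. Alternatively I could expand $e(i)$ directly from its definition \eqref{linearDegreeDef} and match the summands termwise against those of $e(j)$ using $d_B(i) = d_{-B}(j)$, being attentive to the doubling of the $d_{-[i]}(i)$ term and how it pairs with the $d_{[j]}(j) = d_{-[i]}(j)$ term in $e(j)$.

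The main obstacle I anticipate is bookkeeping around the special classes $A_0$, $[i]$, and $-[i]$, which are excluded from the various sums and where certain degrees like $d_A(A)$ and $d_A(A_0)$ are deliberately left undefined (see the Remark following Proposition~\ref{oddBalWellDef}). The odd-balanced conditions only guarantee equalities of $d_B(i)$ for $B$ away from these special classes, so I must confirm that every term actually appearing in $e(i)$ and $\delta_A(i)$ is one for which the odd-balanced hypothesis applies, and that the terms I am dropping cancel correctly in the matched pair. Ensuring the index set $\calA \setminus \{[i], -[i]\}$ in \eqref{linearDegreeDef} lines up under the matching $B \mapsto -B$ for the reindexing $i \leftrightarrow j$ is where the argument could go wrong, so I would write out that reindexing explicitly rather than waving at it.
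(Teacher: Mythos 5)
Your overall strategy---deriving both linear-balanced conditions directly from the degree identities supplied by the odd-balanced hypothesis---is exactly the paper's, and your treatment of Condition (2) of Definition~\ref{linBalDef} is correct: applying Condition (1) of Definition~\ref{odd-balancedDef} to $A$ and then to $-A$ gives $d_A(i)=d_{-A}(j)$ and $d_{-A}(i)=d_A(j)$, and subtracting yields $\delta_A(i)=-\delta_A(j)$, with the degenerate case $[i]=-[j]=A_0$ handled by Condition (2) of Definition~\ref{odd-balancedDef}, which forces both degree differences to vanish.

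However, your preferred route for Condition (1) contains a step that fails: neither $|N(i)|=|N(j)|$ nor $\delta_{[i]}(i)=\delta_{[j]}(j)$ holds for matched cells in general. The odd-balanced partition of $P_6$ in Figure~\ref{matchedP3-6Fig}(iii) is a counterexample: the cells $1\in A$ and $2\in -A$ satisfy $[1]=-[2]$, yet $|N(1)|=1\ne 2=|N(2)|$, and $\delta_{[1]}(1)=d_A(1)-d_{-A}(1)=-1$ while $\delta_{[2]}(2)=d_{-A}(2)-d_A(2)=0$. The two failures cancel, so $e(1)=e(2)=2$, but each intermediate equality is false. The obstruction is exactly the bookkeeping issue you flagged: Condition (1) of Definition~\ref{odd-balancedDef} requires $A\ne[i]$, so summing $d_B(i)=d_{-B}(j)$ over $B$ never controls the term $d_{[i]}(i)$; and your chain $\delta_{[i]}(i)=-\delta_{[i]}(j)$ is an application of linear-balanced Condition (2) with $A=[i]$, a case that condition explicitly excludes. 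Your alternative route is the correct one and is essentially the paper's proof (stated there at the level of classes as $e(A)=e(-A)$ and $\delta_A(B)=-\delta_A(-B)$, both consequences of $d_A(B)=d_{-A}(-B)$): expand $e(i)=2d_{-[i]}(i)+\sum_{B\in\calA\setminus\{[i],-[i]\}}d_B(i)$, match the doubled term against the doubled term $2d_{-[j]}(j)=2d_{[i]}(j)$ of $e(j)$ using odd-balanced Condition (1) with $A=-[i]$, and reindex the remaining sum by $B\mapsto -B$, which permutes $\calA\setminus\{[i],-[i]\}$. (Note that the doubled term of $e(j)$ is $2d_{-[j]}(j)$, not the $d_{[j]}(j)$ you name, which does not appear in $e(j)$ at all.)
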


\begin{proof}
Definition \ref{exoDeltaDef} implies that
\[
e(A) = 2 d_{-A}(A) + \sum_{B \in \calA \setminus \{A, -A\} } d_B(A)
\]
for all  $A\in\calA\setminus\{A_0\}$, and 
\[
\delta_A(B) = d_A(B) - d_{-A}(B)
\]
for all $A, B \in \calA$ satisfying $A \neq B$ and $A \neq -B$.
Hence $e(A) = e(-A)$ and $\delta_A(B) = - \delta_A(-B)$ since $d_A(B) = d_{-A}(-B)$ by Definition~\ref{odd-balancedDef}.
\end{proof}

\begin{figure}
\begin{tabular}{ccccccc}
\includegraphics{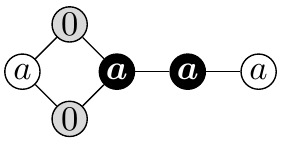}
   & \quad &
\includegraphics{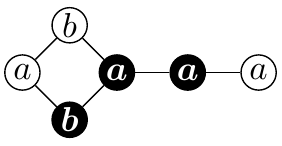}
 & \quad &
\includegraphics{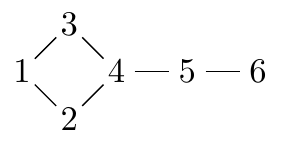}
  \\
(i) && (ii) && (iii)
\end{tabular}
\caption{Strictly linear-balanced partitions. Figure (i) has $e(A) = 2$ and
Figure (ii) has $e(A) = e(B) = 2$ and $\delta_A(B) = \delta_B(A) = 0$.
See Example~\ref{tadpoleEx}.
}
\label{tadpoleFig}
\end{figure}

\begin{example}
\label{tadpoleEx}
Consider the graph network with cell labels shown in Figure~\ref{tadpoleFig}(iii). 
Figures~\ref{tadpoleFig}(i) and \ref{tadpoleFig}(ii) show strictly linear-balanced subspaces.
First consider Figure \ref{tadpoleFig}(i).
The linear degrees satisfy $e(i)=2$ for $i\in\{1, 4, 5,  6\}$, so Condition (1) of Definition \ref{linBalDef} holds.
It is easy to check that Condition (2) holds; for example $\delta_A(2) = 0 = - \delta_A(3)$. 
The partition is not odd-balanced since $A\not=[4]= -[6]$ and $d_{A}(4) = 0 \neq 1 = d_{-A}(6)$, violating Condition~(1) of Definition~\ref{odd-balancedDef}.

Now consider Figure~\ref{tadpoleFig}(ii). Every cell has linear degree 2, so Condition~(1) of Definition~\ref{linBalDef} holds.
Condition (2) is met since $\delta_B(i) = 0$ for $i \in \{1, 4, 5, 6\}$
and $\delta_A(i) = 0$ for $i \in \{2, 3\}$.
This partition is not odd-balanced since $1 = d_B(1) \neq d_{-B}(5) = 0$ and $[1]=-[5]$, violating Condition (1) of Definition~\ref{odd-balancedDef}.
\end{example}

\begin{figure}
\begin{tabular}{ccc}
\includegraphics[scale = 1]{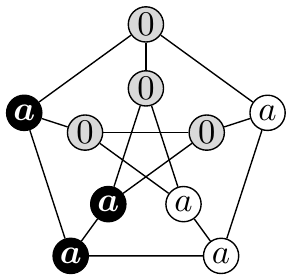} 
   & \quad &
\includegraphics[scale = 1]{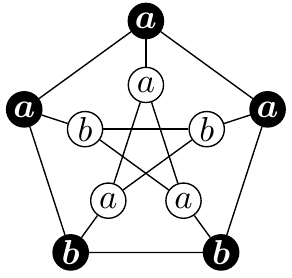}
  \\
(i) && (ii) 
\end{tabular}
\caption{
Two strictly linear-balanced partitions of the Petersen graph.
For Figure (i), the linear degree is $e(A) = 2$.
For Figure (ii), the linear degrees are $e(A) = e(B) = 2$, the degree differences are
$\delta_A(B) = \delta_B(A) = 0$.
See Example~\ref{PetersenEx}.
}
\label{PetersenFig}
\end{figure}

\begin{example}
\label{PetersenEx}
An exhaustive computer search shows that
the Petersen graph has exactly two $\aut(G) \times \Z_2$ orbits of
strictly linear-balanced partitions, shown in 
Figure~\ref{PetersenFig}. 
The subspace in Figure (i) is not odd-balanced because some cells in
$A$ have two neighbors in $A_0$ while other
cells in $A$ have no neighbors in $A_0$. 
Thus, $d_{A_0}(i)$ is not constant for all $i \in A$,
and Condition (1) of Definition~\ref{odd-balancedDef} does not hold.

The Petersen graph is 3-regular,
so Proposition~\ref{regularNoSEB} states there are no strictly exo-balanced partitions.
Our attempts to find a result analogous to Proposition~\ref{regularNoSEB},
ruling out strictly linear-balanced partitions, fail on the Petersen graph.
\end{example}

\subsection{Invariant Subspaces and Matched Partitions}

\begin{thm}
\label{gUlThm}
Let $(\calA,m)$ be a matched partition of the cells of $(G, V)$.
The subspace $\DeltaAm$ is $\Dl$-invariant if and only if $(\calA, m)$ is linear-balanced.
\end{thm}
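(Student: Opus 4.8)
The plan is to prove both directions in parallel with the proof of Theorem~\ref{odd-balancedThm}, but replacing the nonlinear bump functions used there by the identity, since linearity of $h$ makes the relevant degrees easy to isolate. Throughout I use that $\DeltaAm$ is $f$-invariant exactly when $f_i(x)+f_j(x)=0$ for every $x\in\DeltaAm$ and every pair with $[i]=-[j]$; this single condition also captures $(f(x))_i=0$ for $i\in A_0$ (take $j=i$) and within-class equality. The computational engine is the compact formula (\ref{giLin}), applied with a cross section $\calAtilde$ chosen to contain $[i]$ whenever $[i]\neq A_0$.

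For the backward direction I would assume $\calAm$ is linear-balanced, fix $f\in\Dl$ and $x\in\DeltaAm$, and split into cases. When $[i]=-[j]\neq A_0$, I apply (\ref{giLin}) to $f_i$ and derive the analogous expression for $f_j$ using the same cross section; since $x_{[j]}=-x_{[i]}$ and $g,h$ are odd, the internal terms cancel and the matched coupling terms combine to give
$$
f_i(x)+f_j(x)=(e(j)-e(i))\,h(x_{[i]})+\sum_{B\in\calAtilde\setminus\{[i]\}}\bigl(\delta_B(i)+\delta_B(j)\bigr)h(x_B).
$$
Condition (1) of Definition~\ref{linBalDef} kills the first term, and condition (2) kills each summand, since every $B\in\calAtilde\setminus\{[i]\}$ satisfies $[i]\neq B\neq[j]$ (because $[j]=-[i]\notin\calAtilde$). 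When $i\in A_0$, Equation~(\ref{giA0}) (valid since $\Dl\subseteq\Dodd$) gives $f_i(x)=\sum_{B\in\calAtilde}\delta_B(i)h(x_B)$, and condition (2) with $j=i$ forces every $\delta_B(i)=0$; hence $f_i(x)=0$. Thus $f(x)\in\DeltaAm$.

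For the forward direction I would assume $\DeltaAm$ is $\Dl$-invariant and verify the two conditions using only the single field with $g=0$ and $h$ the identity, together with carefully supported test vectors. For condition (1), with $[i]=-[j]\neq A_0$, take $x$ supported on $\{[i],-[i]\}$ with $x_{[i]}=v\neq 0$; the displayed formula collapses to $(e(j)-e(i))v=0$, giving $e(i)=e(j)$. For condition (2) with $A\neq A_0$ and $[i]\neq A\neq[j]$, take $\calAtilde\supseteq\{[i],A\}$ and $x$ with $x_A=v$, $x_{-A}=-v$, $x_{[i]}=0$, and all other classes zero; the formula collapses to $(\delta_A(i)+\delta_A(j))v=0$, giving $\delta_A(i)=-\delta_A(j)$. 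The case $A=A_0$ is vacuous since $\delta_{A_0}\equiv 0$, and the case $i,j\in A_0$ follows because invariance already forces $f_i(x)=0$, whence $\delta_A(i)=0$.

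The main obstacle is not a deep idea but the bookkeeping around the cross section and the class $A_0$: one must combine $f_i$ and $f_j$ relative to the \emph{same} cross section, so that the sum for $f_j$ runs over $\calAtilde\setminus\{[i]\}$ (via $-[j]=[i]$) rather than over $\calAtilde\setminus\{[j]\}$, and one must verify that the coefficient of $h(x_{[i]})$ is exactly the linear-degree mismatch $e(j)-e(i)$, with the diagonal class $[i]$ and the doubled $-[i]$ contribution already absorbed into $e$ via $e(i)=|N(i)|-\delta_{[i]}(i)$. Once these indices are aligned, both directions reduce to reading off conditions (1) and (2) term by term. I would also remark, in parallel with Remark~\ref{LapExo-balanced}, that since the forward direction uses only invariance under the negative graph Laplacian ($g=0$, $h$ the identity), $\DeltaAm$ is $\Dl$-invariant if and only if it is invariant under $L$ in this matched sense.
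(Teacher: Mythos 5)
Your proposal is correct and follows essentially the same route as the paper's proof: both directions rest on combining Equation~(\ref{giLin}) for $f_i$ and $f_j$ relative to a common cross section to get the identity $f_i(x)+f_j(x)=-(e(i)-e(j))h(x_{[i]})+\sum_{B\in\calAtilde\setminus\{[i]\}}(\delta_B(i)+\delta_B(j))h(x_B)$, with the backward direction reading off conditions (1) and (2) and the forward direction using $g=0$, $h$ the identity, and test vectors supported on $\{[i],-[i]\}$ or $\{A,-A\}$. Your closing remark about $L$-invariance likewise matches the paper's Remark~\ref{Linvariant}.
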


\begin{proof}
If $x \in \DeltaAm$, $i, j \in \mathcal{C}$ satisfy $[i] = -[j]$, and $f \in \Dl$, then Equation~(\ref{giLin}) implies that
\begin{equation}
\label{gigjLin}
f_i(x) + f_j(x) =  -  \big ( e(i) - e(j) \big ) h(x_{[i]}) +
\sum_{B \in \calAtilde \setminus \{[i]\} } \big(\delta_B(i)+\delta_B(j)\big) h (x_B)
\end{equation}
for each cross section $\calAtilde$.

For the backward direction, assume $\calAm$ is linear-balanced, $f\in \Dl$, 
$x\in \DeltaAm$, and $[i] = -[j]$.
Then 
$e(i) - e(j) = 0$ if $i \not\in A_0$ and
$h(x_{[i]}) = 0$ if $i \in A_0$. 
Also $\delta_B(i) + \delta_B(j) = 0$ for all $B \in  \calAtilde \setminus \{[i]\}$.  
Equation~(\ref{gigjLin}) shows that $f_i(x) + f_j(x) = 0$ for all $[i]=-[j]$, which means $f(x)\in \DeltaAm$.

For the forward direction, assume $\DeltaAm$ is $f$-invariant for all  $f \in \Dl$.
Let $f\in \Dl$ such that $h$ is the identity function. 
To verify Condition (1) of Definition \ref{linBalDef}, assume $[i]=-[j] \neq A_0$, 
and choose an $x\in \DeltaAm$ such that $x_{[i]} \neq 0$ and $x_A = 0$ for all $A \in \calA \setminus \{[i], [j]\}$.
Since $f(x)\in \DeltaAm$ and the sum in Equation~(\ref{gigjLin}) contributes nothing, 
\[
0 = f_i(x) + f_j(x) = -\big ( e(i) - e(j) \big ) h(x_{[i]}).
\]  
Since $h(x_{[i]})=x_{[i]}\ne 0$, $e(i) = e(j)$.
To verify Condition (2)  of Definition \ref{linBalDef}, assume $[i]=-[j]$ and $A_0\ne A \in \calA$ satisfies $[i] \neq A \neq [j]$. 
Choose $x\in\DeltaAm$ such that $x_A \neq 0$ and $x_B = 0$ if $B \in \calA \setminus \{A, -A\}$.
Since $f(x)\in \DeltaAm$, $x_{[i]}=0$ and only one term ($B=A$ or $B=-A$) of the sum in Equation~(\ref{gigjLin}) survives,
\[
0 = f_i(x) + f_j(x) = \big(\delta_A(i)+\delta_A(j) \big) h (x_A).
\]
Since $h(x_A) \neq 0$, we have $\delta_A(i)= -\delta_A(j)$.
Condition (2) for the $A=A_0$ case holds trivially.
\end{proof}

\begin{rem}
\label{Linvariant}
Since the forward direction of the proof uses the identity function as $h$, we can
use a linear algebra test similar to that of Remark~\ref{LapExo-balanced}.
A matched partition $\calAm$ is linear-balanced if and only if $\DeltaAm$ is $L$-invariant.
This invariance is often easier to check than the conditions of Definition~\ref{linBalDef}. 
If the partition is linear-balanced, Definition~\ref{odd-balancedDef}
must be used to see if the partition is odd-balanced.
If the linear algebra test shows that the partition is not linear-balanced, then it is not
odd-balanced either.
For example, the subspace $\DeltaAm$ shown in Figure~\ref{matchedPawFig}(ii) is linear-balanced because
$$
\begin{bmatrix}
2 & -1 & -1 & 0 \\
-1 &  2 & -1 & 0 \\
-1 & -1 & 3 & -1 \\
0 & 0 & -1 & 1
\end{bmatrix}
\begin{bmatrix}
a \\
-a \\
0 \\
0
\end{bmatrix}
=
\begin{bmatrix}
3a \\
-3a \\
0 \\
0
\end{bmatrix}
\in \DeltaAm.
$$
As a second example, consider the path $P_3$.  The subspace $\DeltaAm = \{ (a, -a, 0) \mid a \in \R^k\}$ is not linear-balanced,
and hence not odd-balanced, because
$$
\begin{bmatrix}
1 & -1 & 0 \\
-1 & 2 & -1 \\
0 & -1 & 1
\end{bmatrix}
\begin{bmatrix}
a \\
-a \\
0
\end{bmatrix}
=
\begin{bmatrix}
2a \\
-3a \\
a
\end{bmatrix}
\not \in \DeltaAm.
$$
\end{rem}
Remarks~\ref{LapExo-balanced} and \ref{AdjBalanced} state that exo-balanced subspaces are $L$-invariant, 
and balanced subspaces are invariant under the adjacency matrix $M$.
Since linear-balanced subspaces are $L$-invariant, one might expect odd-balanced subspaces to be $M$-invariant, but
this is not the case.  For example, the odd-balanced  subspace in Figure~\ref{matchedP3-6Fig}(iii) is not $M$-invariant.  
See also Remark \ref{OddBalancedRem}.

\subsection{Reduced Systems for Matched Partitions}
\label{reducedForMatchedPartitions}
The theorems in the previous sections show that the ODE $\dot x = f(x)$ can be restricted to an invariant subspace $\DeltaAm$, yielding a
lower-dimensional system.  As in Section~\ref{reducedForPartitions}, we make this formal with propositions in this section.

We extend Definition~\ref{fAdef} to matched partitions.
\begin{definition}
\label{fAmdef}
If $\calAm$ is a matched partition of $\calC$ and  $f(\DeltaAm) \subseteq \DeltaAm$  for some $f\in\mathcal F_G$, then we define $f_{[i]} : \DeltaAm \rightarrow V$ by
$$
f_{[i]}(x) := f_i(x)
$$
for all $x \in \DeltaAm$ and $i \in \calC$.
\end{definition}
Note that $f_A$ is well-defined if $\DeltaAm$ is $f$-invariant, since $f_i(x) = f_j(x)$ for all $i, j \in A \in \calA$.
As in Proposition~\ref{qSystemBal}, we can restrict $f \in \Dodd$ to an odd-balanced partition.

\begin{prop}
\label{gUoddRestricted}
Let $(\calA,m)$ be an odd-balanced partition of the cells of $(G, V)$, 
$\calAtilde$ be a cross section,
and $f \in \Dodd$. Then
\begin{align}
\label{reducedEquationOdd}
f_A(x) = & g(x_A) - d_{A_0}(A) h(x_A) - d_{-A}(A)h(2 x_A)  \\
\nonumber
& + \sum_{B \in \calAtilde \setminus \{A\}} \big( d_B(A) h(x_B - x_A) - d_{-B}(A) h(x_B + x_A) \big)
\end{align}
for all $x \in \DeltaAm$ and all $A \in \calAtilde$. 
\end{prop}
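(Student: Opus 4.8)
The plan is to specialize the formula for $f_i(x)$ that was already derived inside the proof of Theorem~\ref{odd-balancedThm}. Since $(\calA,m)$ is odd-balanced, Theorem~\ref{odd-balancedThm} guarantees that $\DeltaAm$ is $\Dodd$-invariant, so by Definition~\ref{fAmdef} the function $f_A$ is well-defined; choosing any representative $i\in A$ gives $f_A(x)=f_i(x)$ with $[i]=A$ and $x_{[i]}=x_A$. Because $f\in\Dodd$ has $h(0)=0$, Equation~(\ref{giNew}) applies and yields
$$
f_A(x)=g(x_A)+\sum_{B\in\calA\setminus\{A\}} d_B(A)\,h(x_B-x_A),
$$
where each coefficient $d_B(A)$ is well-defined by Proposition~\ref{oddBalWellDef}: indeed $A\neq A_0$ since $A\in\calAtilde$, and $B\neq A$ throughout the sum.

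Next I would split the index set $\calA\setminus\{A\}$ into three disjoint groups using the matched-partition structure: the singleton $\{A_0\}$, the singleton $\{-A\}$, and the two-element blocks $\{B,-B\}$ for $B\in\calAtilde\setminus\{A\}$. That $\calAtilde$ is a cross section is exactly what makes this a genuine partition of $\calA\setminus\{A\}$, since each non-$A_0$ class lies in a unique pair and the pair $\{A,-A\}$ contributes only $-A$ once $A$ is removed. I then simplify each group using the defining relations of $\DeltaAm$, namely $x_{-C}=-x_C$ for every class $C$ and $x_i=0$ for $i\in A_0$, together with the oddness of $h$. The $-A$ term becomes $d_{-A}(A)\,h(-2x_A)=-d_{-A}(A)\,h(2x_A)$; each paired block collapses to $d_B(A)\,h(x_B-x_A)-d_{-B}(A)\,h(x_B+x_A)$ after writing $x_{-B}=-x_B$; and the $A_0$ term, using $x_{A_0}=0$, becomes $d_{A_0}(A)\,h(-x_A)=-d_{A_0}(A)\,h(x_A)$. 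Reassembling these pieces reproduces exactly Equation~(\ref{reducedEquationOdd}).

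The step needing the most care is the $A_0$ term, together with the edge case $A_0=\emptyset$. When $A_0\neq\emptyset$ the common value $x_{A_0}=0$ is legitimate and the simplification above is immediate. When $A_0=\emptyset$ there is no cell in $A_0$ and $x_{A_0}$ is undefined, but then $d_{A_0}(A)=|\emptyset\cap N(i)|=0$, so the corresponding summand and the target term $-d_{A_0}(A)\,h(x_A)$ both vanish and agree; I would flag this explicitly so the stated formula holds uniformly in both cases. I would also note that $d_{A_0}(A)$ is well-defined here (it is the case $B=A_0$ of $d_B(A)$, covered by Proposition~\ref{oddBalWellDef} because $A\neq A_0$), in contrast to the undefined $d_A(A_0)$. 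The remaining manipulations are routine uses of oddness and $x_{-C}=-x_C$, so I expect no further obstacle beyond this bookkeeping.
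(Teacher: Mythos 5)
Your proposal is correct and matches the argument the paper leaves implicit (the proposition is stated without proof there): specialize Equation~(\ref{giNew}) to a representative of $A$, split $\calA\setminus\{A\}$ into $\{A_0\}$, $\{-A\}$, and the pairs $\{B,-B\}$ for $B\in\calAtilde\setminus\{A\}$, and simplify using $x_{-C}=-x_C$, $x_{A_0}=0$, and the oddness of $h$. Your explicit handling of the $A_0=\emptyset$ case and of the well-definedness of $d_{A_0}(A)$ via Proposition~\ref{oddBalWellDef} is careful bookkeeping that the paper does not spell out but that is consistent with its conventions.
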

Even though $h(0) = 0$ for $f \in \Dodd$, there are two sources of self-coupling in 
Equation~(\ref{reducedEquationOdd});  the first coupling term involving edges in the graph network
between $A_0$ and $A$,
and the second involves edges between $-A$ and $A$.

For linear coupling, many of the coupling terms cancel or combine.

\begin{prop}
\label{gUlRestricted}
Let $(\calA,m)$ be a linear-balanced partition of the cells of $(G, V)$,
$\calAtilde$ be a cross section,
and $f \in \Dl$. Then
\begin{equation}
\label{reducedUlEquation}
f_A(x) =  g(x_A) - e(A) h(x_A)
+ \sum_{B \in \calAtilde \setminus \{A\}} \delta_B(A) h(x_B )
\end{equation}
for all $x \in \DeltaAm$ and all $A \in \calAtilde$. 
\end{prop}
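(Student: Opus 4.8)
The plan is to reduce everything to the single-cell formula already recorded in Equation~(\ref{giLin}) and then observe that the linear-balanced hypothesis is precisely what promotes the representative-dependent coefficients to well-defined class-level quantities. First I would invoke Theorem~\ref{gUlThm}: since $\calAm$ is linear-balanced and $f \in \Dl$, the subspace $\DeltaAm$ is $f$-invariant, so $f_A$ is well-defined by Definition~\ref{fAmdef} and in particular $f_i(x) = f_j(x)$ for any two cells $i,j$ in the same class.

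Next I would fix $A \in \calAtilde$, choose any cell $i \in A$ so that $[i] = A$, and write $f_A(x) = f_i(x)$. Applying Equation~(\ref{giLin}) to this $i$ gives
\[
f_A(x) = g(x_A) - e(i)\, h(x_A) + \sum_{B \in \calAtilde \setminus \{A\}} \delta_B(i)\, h(x_B),
\]
and it then remains only to replace each representative-dependent coefficient by its value on the class. Because $\calAtilde$ is a cross section it never contains $A_0$, so $A \neq A_0$ and Condition (1) of Definition~\ref{linBalDef} yields $e(i) = e(A)$. For the terms of the sum, each $B \in \calAtilde \setminus \{A\}$ satisfies $B \neq A$ by construction, and since $A \in \calAtilde$ forces $-A \notin \calAtilde$ we also have $B \neq -A$; thus the hypothesis $A \neq B \neq -A$ of Definition~\ref{linBalDef} holds and $\delta_B(i) = \delta_B(A)$ is well-defined. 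Substituting these identities yields Equation~(\ref{reducedUlEquation}).

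The argument is essentially bookkeeping rather than a fresh computation, so the one point that genuinely needs care is verifying that every coefficient in the reduced formula is well-defined — in particular that each summation index $B$ avoids both $A$ and $-A$, so that $\delta_B(A)$ makes sense. This is exactly where the cross-section structure is used, and it is the only place a reader might otherwise slip; no new balance identity beyond those already noted after Definition~\ref{linBalDef} is required.
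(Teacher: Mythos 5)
Your proposal is correct and matches the paper's (largely implicit) argument: the paper states this proposition without a displayed proof, intending exactly the derivation you give, namely specializing Equation~(\ref{giLin}) to a representative $i\in A$ and using the linear-balanced condition to replace $e(i)$ and $\delta_B(i)$ by the well-defined class quantities $e(A)$ and $\delta_B(A)$, with the cross-section structure guaranteeing $A\neq A_0$ and $A\neq B\neq -A$. No gaps.
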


Note that the linearity of $h$ can be used to evaluate $h$
just one time for each $A \in \calAtilde$.
That is, Equation~(\ref{reducedUlEquation}) can be written as
$$
f_A(x) =
g(x_A) + h \left (-  e(A) x_A + \sum_{B \in \calAtilde \setminus \{A\}} 
\delta_B(A) x_B \right ).
$$

\begin{example}
When a linear-balanced partition has three elements, $\calA= \{A_0, A, -A\}$, then the dynamics
of $\dot x = f(x)$ on $\DeltaAm$ is fairly simple. If $\calAm$ is odd-balanced and $f \in \Dodd$, then 
the restriction is
$$
\dot a = g(a) - d_{A_0}(A) h(a) - d_{-A}(A) h(2a).
$$
If $\calAm$ is linear-balanced and $f \in \Dl$, then 
the restriction is
$$
\dot a = g(a) - e(A) h(a).
$$

We give a few examples here.  For the odd-balanced subspace 
shown in Figure~\ref{matchedP3-6Fig} (ii), 
the system of ODEs $\dot x = f(x)$ for $f \in \Dodd$, restricted to $\DeltaAm$, is
$$
\dot a = g(a) - h(a).
$$
The dynamics of $\dot x = f(x)$ with $f \in \Dodd$, restricted
to the odd-balanced subspace shown in Figure~\ref{matchedP3-6Fig} (iii), is 
$$
\dot a = g(a) - h(2a).
$$
If $f \in \Dl$, then the dynamics on this invariant subspace as well as the strictly linear-balanced subspaces
in Figure~\ref{tadpoleFig}(i) and Figure~\ref{PetersenFig}(i) reduce to
$$
\dot a = g(a) - 2h(a).
$$
\end{example}

\begin{example}
\label{C4oddBalanced}
The odd-balanced partition of the cells in Figure~\ref{matchedC4Fig}(i) is invariant under the ODE $\dot x = f(x)$
when $f \in \Dodd$.  The reduced dynamics of this system on $\DeltaAm$ are
\begin{align}
\begin{split}
\label{C4m1Qsystem}
\dot a = & g(a) + h(b - a) - h(b + a) \\
\dot b = & g(b) + h(a - b) - h(a + b) .
\end{split}
\end{align}
If $f \in \Dl$, the system reduces to two uncoupled, identical sub-systems.
\begin{align}
\begin{split}
\label{C4m1Qsystem}
\dot a = & g(a) -2 h(a) \\
\dot b = & g(b) -2 h(b) .
\end{split}
\end{align}
The two strictly linear-balanced partitions Figure~\ref{tadpoleFig}(ii) and Figure~\ref{PetersenFig}(ii) also reduce to System~(\ref{C4m1Qsystem}) when
$f \in \Dl$, but they are not $\Dodd$-invariant.
\end{example}

\section{Exclusion of Other Invariant Subspaces}

Now we present a complete characterization of invariant subspaces for systems
in $\D$, $\Dzero$,  $\Dodd$ and $\Dl$.
We show that the only subspaces that are invariant for all $f$ in one of these classes
have been described by our Theorems \ref{balanced}, \ref{exo-balanced}, \ref{odd-balancedThm}, and \ref{gUlThm}.
These four theorems hypothesize a partition, or matched partition, of the cells of a graph network, 
and do not exclude the existence of invariant subspaces that do not come from a partition. 
Note that invariant subspaces of linear operators rarely come from partitions.

The next proposition characterizes all $\Dl$-invariant subspaces of $V^n$.
The significance of this is that $\Dl$ is a subset of $\Dodd,\Dzero$, and $\D$, 
so the proposition applies to all our sets of vector fields.

\begin{prop}
\label{onlyIfOdd}
Let $(G,V)$ be a graph network system. If $W$ is a $\Dl$-invariant subspace of $V^n$, then $W = \DeltaA$ for some  partition $\mathcal A$, or $W = \DeltaAm$ for some matched partition $(\mathcal A, m)$.
\end{prop}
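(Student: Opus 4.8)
The plan is to use two complementary families inside $\Dl$: the internal fields (odd $g$, $h=0$) and the negative graph Laplacian ($g=0$, $h=\mathrm{id}$, as in Remark~\ref{LapExo-balanced}). The internal fields will show that $W$ is a polydiagonal subspace cut out by sign relations $x_i = \pm x_j$ and $x_i = 0$, and the Laplacian will then force those relations to come from a partition or a matched partition. First I would fix $x \in W$ and apply the maps $x \mapsto (g(x_1),\dots,g(x_n))$ with $g$ odd, which lie in $\Dl$ because $h=0$ is linear. Grouping the distinct nonzero coordinate values of $x$ into $\pm$-pairs and choosing a representative $v_1,\dots,v_m$ from each pair gives $2m$ distinct nonzero points $\pm v_t$; an odd $g$ can take arbitrary prescribed values there (with $-w_t$ forced at $-v_t$ and $0$ at $0$), so the orbit of $x$ is exactly
$$\Delta^{(x)} := \bigl\{\, y \in \Rall \ \big|\ y_i = 0 \text{ if } x_i = 0,\ \ y_i = \eps_i\, w_{t(i)} \text{ otherwise, for some } w_1, \dots, w_m \in V \,\bigr\},$$
where $\eps_i \in \{\pm 1\}$ and $t(i)$ record the sign and pair of $x_i$. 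Thus $x \in \Delta^{(x)} \subseteq W$. Only finitely many such patterns occur as $x$ ranges over $W$, so $W = \bigcup_x \Delta^{(x)}$ is a finite union of subspaces of $W$; since $\R^k$ is infinite and a vector space is not a finite union of proper subspaces, we get $\Delta^{(x^*)} = W$ for some $x^*$. Hence $W$ is a sign-pattern polydiagonal with a zero set $Z$, pure (single-sign) classes, and mixed ($\pm$) classes.

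Next I would decide which patterns survive invariance under $f = -L$. Write $\mathcal P$ for the union of the pure classes and $\mathcal Q$ for the union of the mixed classes together with $Z$. Taking $x$ equal to a nonzero $w$ on one pure class $P$ and $0$ elsewhere gives $(Lx)_i = -d_P(i)\,w$ for $i \notin P$; demanding $Lx \in W$ forces $(Lx)_i = 0$ at each zero cell, and on a mixed class it forces $d_P$ on the $+$ part to equal the negative of $d_P$ on the $-$ part, so that $d_P \equiv 0$ there since degrees are nonnegative. Therefore no pure class is adjacent to $\mathcal Q$, i.e.\ there are no edges between $\mathcal P$ and $\mathcal Q$. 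Since $G$ is connected and $\mathcal P \cup \mathcal Q = \calC$, either $\mathcal P = \emptyset$ or $\mathcal Q = \emptyset$.

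Finally I would translate the two surviving cases. If $\mathcal Q = \emptyset$, then $W$ has no zeros and only pure classes, so $W = \DeltaA$ for the partition $\calA$ into those classes. If $\mathcal P = \emptyset$, then every nonzero class is mixed; setting $A_0 := Z$, matching each class's $+$ part to its $-$ part, and adjoining $\emptyset$ as $A_0$ when $Z = \emptyset$, produces a matched partition with $W = \DeltaAm$. I expect the real work to be this second step: the connectivity argument is exactly what excludes the patterns invariant under all internal fields but not under the Laplacian (for instance $\{(a,a,0)\}$ on $P_3$), and one must handle the degree bookkeeping on mixed classes and verify that the surviving pattern yields a \emph{legal} matched partition --- an odd number of classes, a single fixed point $A_0$, and an empty class only in the role of $A_0$.
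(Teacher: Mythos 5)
Your proposal is correct, but it reaches the sign-pattern structure of $W$ by a genuinely different route from the paper's. The paper writes $W$ as the null space of a matrix in reduced row echelon form over the $kn$ scalar coordinates and feeds in the cubic field $g(v)=(v_1^3,\dots,v_k^3)$, $h=0$; linear independence of monomials forces each coefficient into $\{0,\pm 1\}$ with at most one nonzero entry per relation, and two further polynomial fields (a diagonal scaling and a coordinate rotation) are then needed to show the relations respect the $\R^k$-blocks, i.e.\ have the form $x_i=\pm x_j$ or $x_i=0$. You instead observe that the orbit of a single point $x\in W$ under the internal fields ($h=0$, $g$ odd) is exactly the sign-pattern polydiagonal $\Delta^{(x)}$ determined by the coincidence pattern of the coordinates of $x$, so $W$ is a finite union of subspaces $\Delta^{(x)}\subseteq W$ and, since a real vector space is not a finite union of proper subspaces, equals one of them. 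This is slicker: it bypasses the echelon-form bookkeeping and handles $k>1$ for free, at the modest cost of invoking the existence of a smooth odd $g$ with prescribed values at finitely many points, where the paper gets by with explicit polynomials (a construction the paper itself uses elsewhere, e.g.\ in the proof of Theorem~\ref{odd-balancedThm}, so this is unobjectionable). The second halves of the two proofs are essentially the same argument in different clothing: your dichotomy between the union $\mathcal P$ of pure classes and the union $\mathcal Q$ of mixed classes together with the zero set, with connectivity and the field $g=0$, $h=\mathrm{id}$ ruling out edges between them, is the paper's Case~2 argument that the partial matching function must have domain all of $\calA$ --- a class outside $\dom(m)$ is precisely one of your pure classes, and the paper likewise tests a vector supported on a single such class against the Laplacian-type field and evaluates at an adjacent matched or zero cell. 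Your final translation into a legal matched partition (splitting each mixed class into its $+$ and $-$ parts, taking $A_0$ to be the zero set, and adjoining $\emptyset$ as $A_0$ when the zero set is empty) matches the paper's construction, and you correctly note that no balance conditions need to be verified at this stage since the proposition asserts only the polydiagonal form.
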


\begin{proof}
Assume $W$ is $\Dl$-invariant. We identify $x\in V^n$ with 
$$
y=\big((x_1)_1,\ldots, (x_1)_k,\ldots,(x_n)_1,\ldots, (x_n)_k \big) \in \R^{kn}.
$$
We use Greek letters for the components of $y$ or $f(y)$.
Since $W$ is a subspace, it is the null space of a matrix with $kn$ columns in reduced row echelon form. 
Let $L$ be the index set of the leading columns and $F$ be the index set of the free columns of this matrix. 
Since the leading variables are linear combinations of the free variables, there exist real numbers $a_{\lambda, \varphi}$ such that $W$ is the set of all $y$ that satisfy
\begin{equation}
\label{Weqns}
y_\lambda = \sum_{\varphi \in F} a_{\lambda,\varphi} y_\varphi
\end{equation}
for all $\lambda \in L$.
Note that $|F| = \dim (W)$, and $|L| = kn - \dim(W)$ is the codimension of $W$.

Let $f\in \Dl$ be defined by $f_\mu(y) := y_\mu^3$, that is, $g(v_1,\ldots,v_k):=(v_1^3,\ldots,v_k^3)$ and $h=0$. 
The invariance of $W$ under $f$ implies that if $y\in W$, then
$$
(\sum_{\varphi \in F} a_{\lambda,\varphi} y_\varphi)^3=y_\lambda^3 = \sum_{\varphi \in F} a_{\lambda,\varphi} y_\varphi^3
$$
for all $\lambda \in L$.  This gives 
$$
\sum_{\varphi \in F}  (a_{\lambda, \varphi}^3 - a_{\lambda,\varphi} ) \, y_\varphi^3+ 3 \sum_{\substack{\varphi  \neq \psi \\ \varphi,\psi\in F} } a_{\lambda,\varphi}^2 a_{\lambda,\psi}  \, y_{\varphi}^2 y_{\psi} + 6 \sum_{\substack{\varphi < \psi < \theta \\ \varphi,\psi, \theta \in F} } a_{\lambda,\varphi} a_{\lambda,\psi} a_{\lambda,\theta} \, y_{\varphi} y_{\psi}  y_{\theta}= 0
$$
for all $\lambda \in L$ and $y_\varphi,y_\psi, y_\theta \in\mathbb{R}$.
Since free monomials are linearly independent, this implies
$$
a_{\lambda, \varphi}^3 - a_{\lambda,\varphi} = 0 \mbox{ and } a_{\lambda,\varphi}^2 a_{\lambda,\psi} = 0
$$
for all $\lambda \in L$ and all distinct $\varphi, \psi \in F$.
The first equation implies that each $a_{\lambda, \varphi}$ is $0$, $1$, or $-1$. 
The second equation implies that for each $\lambda \in L$ there is at most one $\varphi \in F$ for which $a_{\lambda, \varphi} \neq 0$.
Thus,  for each $\lam \in L$, Equation~(\ref{Weqns}) becomes
$y_\lambda = 0$, $y_\lambda = y_\varphi$, or $y_\lambda = - y_\varphi$
for some $\varphi \in F$.

In terms of the $x$ coordinates, this equation is $(x_i)_\ell = (x_j)_m$, $(x_i)_\ell = -(x_j)_m$, or $(x_i)_\ell = 0$,
where $(x_i)_\ell$ is a leading variable and $(x_j)_m$ is a free variable.
We now show that the equations that determine $W$ have the form $x_i = \pm x_j$ or $x_i = 0$.
The first two types of defining equations of the form $(x_i)_\ell = \pm (x_j)_m$ are impossible with $\ell \neq m$ 
since the invariance of $W$ under $f\in \Dl$ defined by 
$g(v) = (v_1, 2 v_2, \cdots , k v_k)$ and $h = 0$, namely
$(f_{i}(x))_\ell = \ell (x_i)_\ell$ implies
\begin{equation}
\label{ellemeqn}
\pm \ell (x_j)_m = \ell (x_i)_\ell  = (f_i(x))_\ell = \pm (f_j(x))_m =  \pm m (x_j)_m
\end{equation}
for all $x\in W$. 
Since $(x_j)_m$ is a free variable, 
we must have $\ell = m$.

Next, suppose that a defining equation for $W$ is $(x_i)_\ell = \pm (x_j)_\ell$ for some $\ell$.  
Then, there must be equations $(x_i)_m = \pm (x_j)_m$ for all $m \in \{1, 2, \ldots, k\}$ 
because $W$ is invariant under the coordinate rotation vector field $f\in\Dl$ defined by
$g(v_1, v_2, \ldots, v_k) = (v_2, v_3, \ldots, v_k, v_1)$ and $h = 0$, namely
\begin{equation*}
(f_i(x))_m =
\begin{cases}
 (x_i)_{m+1}, & 1\le \ell < k \\
 (x_i)_1, & m=k.
\end{cases}
\end{equation*}
Thus, the existence of a defining equation for $W$ of the form $(x_i)_\ell = \pm (x_j)_m$ implies that there are in fact $k$ equations that can be combined to $x_i = \pm x_j$.

Next, assume $W$ has a defining equation $(x_i)_\ell = 0$ for some $\ell$.  The invariance of $W$ under the coordinate rotation vector field implies that
repeated application gives
the defining equation 
$x_i = 0$.

We conclude that the defining equations for $W$ each have the form $x_i = x_j$, $x_i = - x_j$, or $x_i = 0$.

Case 1: 
If all of the equations have the form $x_i = x_j$, then $W = \DeltaA$ for some partition $\calA$.  

Case 2:
Now assume that the defining equations for $W$ include at least one equation of the form $x_i = 0$ or $x_i = - x_j$.  

We define a relation $\backsim$ on $\mathcal C$ such that $i\backsim j$ if any of the following defining equations exist:
\begin{enumerate}
\item $x_i=x_j$; 
\item $x_i=-x_k$ and $x_j=-x_k$ for some $k$;
\item $x_i=0$ and $x_j=0$.
\end{enumerate}
This relation generates an equivalence relation $\bowtie$ on $\mathcal C$, which is the intersection of all equivalence relations on $\mathcal C$ containing $\backsim$.
This equivalence relation $\bowtie$ defines a partition $\mathcal A$ of $\mathcal C$. If there is a defining equation $x_i=0$, then we define $A_0:=[i]\in\calA$. 

Let $m$ be the partial matching function $m$ defined by $m([i]) = [j]$ and $m([j]) = [i]$ if there is a defining equation $x_{i'} = - x_{j'}$ for some $i'\in [i]$ and $j'\in [j]$, and $m([i]) = [i]$ if there is a defining equation $x_{i'} = 0$ for some $i'\in [i]$. Note that $\dom(m)\not=\emptyset$ since we are in Case 2. Also, if $A_0$ exists, then $m(A_0)=A_0$.

Assume by way of contradiction that $m$ is not defined on all of $\calA$. 
Then there is a $B\in\calA\setminus\dom(m)$ and an $A\in\dom(m)$. Since $G$ is connected, we can assume that $B$ has a cell $j$ connected to a cell $i$ in $A$, so that $d_B(i)>0$.
Since $B\not\in\dom(m)$, there is an $x\in W$ such that $x_B \neq 0$ and $x_k= 0 $ for all $k \not \in B$.
Since $W$ is $\Dl$-invariant, $W$ is invariant under $f \in \Dl$ defined by $g=0$ and the identity function $h$. 
There are two subcases:

Case 2a: First, assume $A\not=A_0$. Let $i'\in m(A)$. Since $x\in W$, $x_i+x_{i'}=0$. Hence $f_i(x)+f_{i'}(x)=0$ by the invariance of $W$.
Since $i,i'\not\in B$, we must have $x_{[i]}=0=x_{[i']}$. So Equation~(\ref{giProof}) becomes
$$
f_i(x) = d_B(i) x_B, \quad
f_{i'}(x) = d_B(i') x_B.
$$
Since $d_B(i) > 0$ and $d_B(i') \geq 0$, we have $f_i(x) + f_{i'}(x) = (d_B(i)+d_B(i'))x_B\neq 0$, which is a contradiction.

Case 2b: Next, assume $A=A_0$.
Since $x\in W$, $x_i=0$. Hence $f_i(x)=0$ by the invariance of $W$.
So Equation~(\ref{giProof}) becomes
$$
f_i(x) = d_B(i) x_B \neq 0,
$$
which is again a contradiction.

Therefore $\dom(m)=\calA$. 
If $A_0 \in \calA$, then $\calA$ is an odd partition.
If $A_0 \not \in \calA$, then we add $A_0 := \emptyset$ to $\calA$ to get an odd partition, and we extend $m$ with $m(A_0)=A_0$. 
In both cases $(\calA,m)$ is a matched partition, and $W = \DeltaAm$.
\end{proof}

Our main theorem then follows from this proposition and the previous theorems.
Figure~\ref{mainTheoremFig} summarizes this result with a diagram.

\begin{thm}
\label{MainTheorem}
If $G$ is a graph network and $W$ is a subspace of the phase space $V^n$, then the following hold.
\begin{enumerate}
\item[$(1)$] $W$ is $\Dl$-invariant if and only if $W$ is linear-balanced or exo-balanced.
\item[$(2)$] $W$ is $\Dodd$-invariant if and only $W$ is odd-balanced or exo-balanced.
\item[$(3)$] $W$ is $\Dzero$-invariant if and only if $W$ is exo-balanced.
\item[$(4)$] $W$ is $\D$-invariant if and only if $W$ is balanced.
\item[$(5)$] $W$ is $\mathcal F_G$-invariant if and only if $W$ is balanced.
\end{enumerate}
\end{thm}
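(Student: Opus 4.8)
The plan is to run all five statements off a single structural fact, Proposition~\ref{onlyIfOdd}, together with the previously established characterizations. By the containment~\eqref{gContainment}, $\Dl$ is contained in each of $\Dodd$, $\Dzero$, $\D$, and $\mathcal F_G$, so a subspace invariant under any one of these classes is automatically $\Dl$-invariant. Proposition~\ref{onlyIfOdd} then forces $W = \DeltaA$ for a partition $\calA$, or $W = \DeltaAm$ for a matched partition $(\calA,m)$. Thus in every part of the theorem I may assume $W$ is one of these two shapes, and the whole argument reduces to deciding which balance condition the corresponding (matched) partition must satisfy.

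Parts (1) and (2) need no exclusion. First I would dispose of the backward directions, which are exactly the characterization theorems read right-to-left: an exo-balanced $\DeltaA$ is $\Dl$- and $\Dodd$-invariant by Theorem~\ref{exo-balanced}; a linear-balanced $\DeltaAm$ is $\Dl$-invariant by Theorem~\ref{gUlThm}; an odd-balanced $\DeltaAm$ is $\Dodd$-invariant by Theorem~\ref{odd-balancedThm}. For the forward directions I case-split on the shape of $W$. If $W = \DeltaA$, then Theorem~\ref{exo-balanced} upgrades $\calA$ to exo-balanced; if $W = \DeltaAm$, then Theorem~\ref{gUlThm} makes $(\calA,m)$ linear-balanced in part (1), while Theorem~\ref{odd-balancedThm} makes it odd-balanced in part (2). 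This yields the disjunctions ``linear-balanced or exo-balanced'' and ``odd-balanced or exo-balanced'' immediately.

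The real content is in parts (3), (4), and (5), where anti-synchrony must be excluded because $\Dzero$, $\D$, and $\mathcal F_G$ all contain vector fields with non-odd internal dynamics. Here I would show that a genuine matched subspace $W = \DeltaAm$ — the situation in which at least one defining equation of $W$ is $x_i = 0$ or $x_i = -x_j$ — cannot be $\Dzero$-invariant. A single witness $f \in \Dzero$ given by a nonzero constant $g \equiv c$ and $h \equiv 0$ does the job uniformly. Since at least one defining equation is $x_i = 0$ or $x_i = -x_j$, either some coordinate is forced to vanish, or some genuine matched pair satisfies $x_i = -x_j$ with $i \neq j$. In the first case $x = 0 \in W$ but $f(0)$ has $i$-th entry $c \neq 0$, so $f(0) \notin W$. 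In the second case I pick $x \in W$ with that pair set to $a \neq 0$, so that $f_i(x) = c = f_j(x)$, whereas membership in $W$ would demand $(f(x))_i = -(f(x))_j$, i.e.\ $c = -c$. Because $f \in \Dzero \subseteq \D \subseteq \mathcal F_G$, this simultaneously excludes the matched shape in all three parts. Hence $W = \DeltaA$, and Theorem~\ref{exo-balanced} gives exo-balanced for part (3), Theorem~\ref{balanced} gives balanced for part (4), and the cited Theorem~6.5 of \cite{Pivato} gives balanced for part (5); the backward directions of (4) and (5) are Theorem~\ref{balanced} and that same cited theorem, and (3) backward is Theorem~\ref{exo-balanced} again.

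The step I expect to be the main obstacle is precisely this exclusion, and making it airtight across both faces of the matched case: I must guarantee both that a forced-zero coordinate really produces $f(0) \notin W$ and that a genuine matched pair admits an $x \in W$ nonzero on that pair, so that the constant field breaks the relation $(f(x))_i = -(f(x))_j$. The conceptual point worth stating clearly is that oddness of $g$ is exactly what $\Dodd$ and $\Dl$ buy and what $\Dzero$, $\D$, and $\mathcal F_G$ lack, which is why anti-synchrony survives for the former but collapses for the latter.
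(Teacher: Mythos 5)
Your proposal is correct and follows essentially the same route as the paper: reduce every case to the two polydiagonal shapes via Proposition~\ref{onlyIfOdd}, invoke Theorems~\ref{balanced}, \ref{exo-balanced}, \ref{odd-balancedThm}, and \ref{gUlThm} for the balance conditions, and exclude $\DeltaAm$ in parts (3)--(5) with a constant vector field, which is exactly the witness $f_i(x)=v$ used in the paper. Your two-case analysis of the exclusion (a forced-zero coordinate versus a genuine matched pair) is just a slightly more explicit rendering of the paper's one-line observation.
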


\begin{proof}
Note that $\Dl$ is a subset of $\Dodd,\Dzero$, $\D$, and $\mathcal F_G$. Hence Proposition~\ref{onlyIfOdd} applies in the forward direction of every case.

(1)
Assume $W$ is a $\Dl$-invariant subspace of $V^n$.
Proposition~\ref{onlyIfOdd} implies that 
$W = \DeltaAm$ for some matched partition $\calAm$, or $W = \DeltaA$ for some partition $\calA$. 
If $W = \DeltaAm$, then $W$ is linear-balanced by Theorem \ref{gUlThm}. 
If $W= \DeltaA$, then $W$ is exo-balanced by Theorem~\ref{exo-balanced}.

Conversely, if $W$ is linear-balanced, then $W$ is $\Dl$-invariant by Theorem~\ref{gUlThm}.
If $W$ is exo-balanced, then $W$ is $\Dl$-invariant by Theorem~\ref{exo-balanced}.

(2) Assume $W$ is a $\Dodd$-invariant subspace of $V^n$. 
Proposition~\ref{onlyIfOdd} implies that $W = \DeltaAm$, or $W = \DeltaA$. 
If $W = \DeltaAm$, then $W$ is odd-balanced by Theorem \ref{odd-balancedThm}.  
If $W= \DeltaA$, then $W$ is exo-balanced by Theorem~\ref{exo-balanced}.

Conversely, if $W$ is odd-balanced, then $W$ is $\Dodd$-invariant by Theorem~\ref{odd-balancedThm}.
If $W$ is exo-balanced, then $W$ is $\Dodd$-invariant by Theorem~\ref{exo-balanced}.

(3) Assume $W$ is a $\Dzero$-invariant subspace of $V^n$. Proposition \ref{onlyIfOdd} implies that $W = \DeltaAm$ or $W = \DeltaA$.  
However $W = \DeltaAm$ is not possible since $\DeltaAm$ is not invariant under $f \in \Dzero$
defined by $f_i(x) =  v$, for any nonzero constant $v \in V$.  
Thus $W = \DeltaA$ is exo-balanced by Theorem~\ref{exo-balanced}.
Conversely, if $W$ is is exo-balanced, then $W$ is $\Dzero$-invariant by Theorem~\ref{exo-balanced}.

(4) Assume $W$ is a $\D$-invariant subspace of $V^n$. Proposition \ref{onlyIfOdd} implies that $W = \DeltaAm$ or $W = \DeltaA$.  
By the argument used in Case (3) above,  $W = \DeltaAm$ is not possible.
Thus $W = \DeltaA$ is balanced by Theorem~\ref{balanced}.
Conversely, if $W$ is is balanced, then $W$ is $\D$-invariant by Theorem~\ref{balanced}.

(5) Assume $W$ is an $\mathcal F_G$-invariant subspace of $V^n$. 
As in Cases (3) and (4), Proposition~\ref{onlyIfOdd} and the the argument used in Case (3) show that
$W = \DeltaA$.
Thus $W = \DeltaA$ is balanced by 
Theorem~\ref{balanced} or \cite[Theorem 6.5]{Pivato}.
Conversely, if $W$ is is balanced, then $W$ is $\mathcal F_G$-invariant by \cite[Theorem 6.5]{Pivato}.
\end{proof}

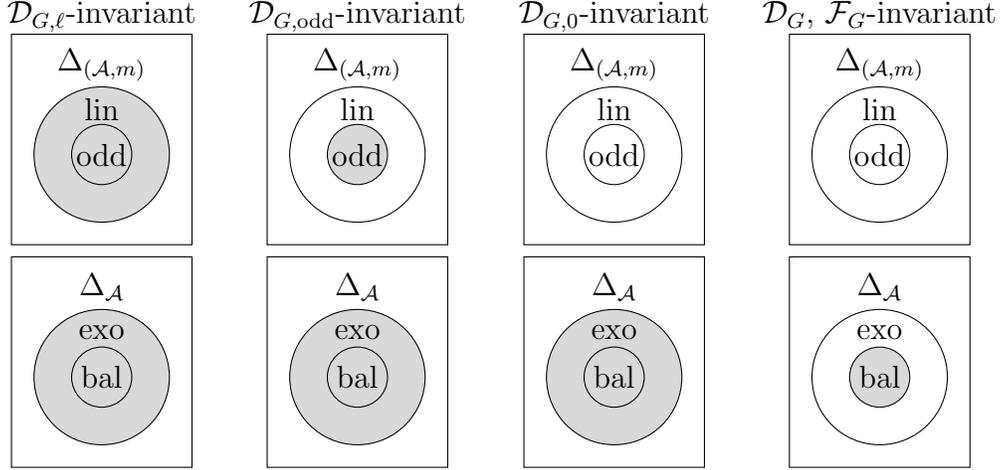
\begin{figure}
\begin{tabular}{ccccccc}
$\mathcal{D}_{G,\ell}$-invariant &  & $\mathcal{D}_{G,\text{odd}}$-invariant &  & $\mathcal{D}_{G,0}$-invariant &  & $\mathcal{D}_{G}$, $\mathcal{F}_{G}$-invariant\tabularnewline
\begin{tikzpicture}
\draw (-1.2,-1.2) rectangle (1.2,1.6);
\draw[fill=black!15] (0,0) ellipse (.9cm and .9cm);
\draw (0,0) circle (.4cm);
\node at (0,0) {odd};
\node at (0,.6) {lin};
\node at (0,1.2) {$\Delta_{(\mathcal{A},m)}$};
\end{tikzpicture} &  & \begin{tikzpicture}
\draw (-1.2,-1.2) rectangle (1.2,1.6);
\draw (0,0) ellipse (.9cm and .9cm);
\draw[fill=black!15] (0,0) circle (.4cm);
\node at (0,0) {odd};
\node at (0,.6) {lin};
\node at (0,1.2) {$\Delta_{(\mathcal{A},m)}$};
\end{tikzpicture} &  & \begin{tikzpicture}
\draw (-1.2,-1.2) rectangle (1.2,1.6);
\draw (0,0) ellipse (.9cm and .9cm);
\draw (0,0) circle (.4cm);
\node at (0,0) {odd};
\node at (0,.6) {lin};
\node at (0,1.2) {$\Delta_{(\mathcal{A},m)}$};
\end{tikzpicture} &  & \begin{tikzpicture}
\draw (-1.2,-1.2) rectangle (1.2,1.6);
\draw (0,0) ellipse (.9cm and .9cm);
\draw (0,0) circle (.4cm);
\node at (0,0) {odd};
\node at (0,.6) {lin};
\node at (0,1.2) {$\Delta_{(\mathcal{A},m)}$};
\end{tikzpicture}\tabularnewline
\begin{tikzpicture}
\draw (-1.2,-1.2) rectangle (1.2,1.6);
\draw[fill=black!15] (0,0) ellipse (.9cm and .9cm);
\draw (0,0) circle (.4cm);
\node at (0,0) {bal};
\node at (0,.6) {exo};
\node at (0,1.2) {$\Delta_\mathcal{A}$};
\end{tikzpicture} &  & \begin{tikzpicture}
\draw (-1.2,-1.2) rectangle (1.2,1.6);
\draw[fill=black!15] (0,0) ellipse (.9cm and .9cm);
\draw (0,0) circle (.4cm);
\node at (0,0) {bal};
\node at (0,.6) {exo};
\node at (0,1.2) {$\Delta_\mathcal{A}$};
\end{tikzpicture} &  & \begin{tikzpicture}
\draw (-1.2,-1.2) rectangle (1.2,1.6);
\draw[fill=black!15] (0,0) ellipse (.9cm and .9cm);
\draw (0,0) circle (.4cm);
\node at (0,0) {bal};
\node at (0,.6) {exo};
\node at (0,1.2) {$\Delta_\mathcal{A}$};
\end{tikzpicture} &  & \begin{tikzpicture}
\draw (-1.2,-1.2) rectangle (1.2,1.6);
\draw (0,0) ellipse (.9cm and .9cm);
\draw[fill=black!15] (0,0) circle (.4cm);
\node at (0,0) {bal};
\node at (0,.6) {exo};
\node at (0,1.2) {$\Delta_\mathcal{A}$};
\end{tikzpicture}\tabularnewline
\end{tabular}

\caption{
Visualization of Theorem~\ref{MainTheorem} for a fixed graph network system $(G,V)$.
The rectangles represent the polydiagonal subspaces $\DeltaAm$ or $\DeltaA$.
The circles represent linear-balanced, odd-balanced, exo-balanced, or balanced subspaces.
The subspaces that are invariant under the indicated set of vector fields are shaded.
}
\label{mainTheoremFig}
\end{figure}

\begin{conj}
If $\calAm$ is a linear-balanced partition of the cells of a graph network, then $|A| = |-A|$ for all $A\in\calA$.
\end{conj}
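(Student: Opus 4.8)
The plan is to reduce the statement to the injectivity of the graph Laplacian $L$ restricted to $\DeltaAm$. Since $|A|$ and all of the quantities $d_A(i)$, $e(i)$, $\delta_A(i)$ are purely combinatorial, I may take $V = \R$. Write $n_A := |A| - |-A|$ and $s_A := |A| + |-A|$ for each class, so that $n_{-A} = -n_A$, $n_{A_0} = 0$, and $s_A = s_{-A} > 0$; the goal is to show $n_A = 0$ for every $A$. If $\calA = \{A_0\}$ with $A_0 = \mathcal C$ there is nothing to prove, so I assume the cross section $\calAtilde$ is nonempty, i.e. there is a genuine matched pair.

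First I would derive a homogeneous linear system satisfied by $(n_A)$. Summing the linear degree $e(i)$ over $i \in A$ and over $i \in -A$, and writing $E(A,B) := \sum_{i \in A} d_B(i)$ for the number of edges joining $A$ to $B$, the facts $e(i) = e(A)$ on each class and $e(A) = e(-A)$ from Definition~\ref{linBalDef} give
\[
(|A| - |-A|)\, e(A) = \sum_{B \in \calA \setminus \{A, -A\}} \big( E(A,B) - E(-A,B) \big).
\]
Using $E(A,B) - E(-A,B) = E(B,A) - E(B,-A) = |B|\,\delta_A(B)$ and pairing each $B$ with $-B$ (the term $B = A_0$ drops out since $\delta_A(A_0) = 0$), this collapses to
\[
e(A)\, n_A - \sum_{B \in \calAtilde \setminus \{A\}} \delta_A(B)\, n_B = 0 \qquad (A \in \calAtilde),
\]
that is, $M n = 0$, where $M$ has entries $M_{AA} = e(A)$ and $M_{AB} = -\delta_A(B)$.

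Next I would recognize $M$ as a conjugate of the reduced Laplacian. By Remark~\ref{Linvariant} the subspace $\DeltaAm$ is $L$-invariant, and computing $(Lx)_i$ for $i \in A$ in cross-section coordinates shows that $L$ restricts to the matrix $\widehat L$ with $\widehat L_{AA} = e(A)$ and $\widehat L_{AB} = -\delta_B(A)$. Evaluating the alternating edge count $E(A,B) - E(A,-B) - E(-A,B) + E(-A,-B)$ two different ways yields the symmetry
\[
s_A\, \delta_B(A) = s_B\, \delta_A(B),
\]
from which $M = S\,\widehat L\, S^{-1}$ with $S = \diag(s_A)$. Consequently $w := S^{-1} n$ satisfies $\widehat L w = 0$. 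Finally I would invoke connectivity: $\widehat L$ is the restriction of the connected graph Laplacian to $\DeltaAm$, so $\ker \widehat L = \DeltaAm \cap \ker L = \DeltaAm \cap \spn\{\mathbf 1\}$. Because $\calAtilde \neq \emptyset$ there is a pair $A \neq -A$ with both classes nonempty, and the relation $x_i = -x_j$ for $[i] = -[j]$ forces $\mathbf 1 \notin \DeltaAm$; hence $\ker \widehat L = \{0\}$, so $w = 0$ and $n = 0$.

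The main obstacle is the middle step. The raw counting produces only the matrix $M$, whose nonsingularity is not evident in isolation; it is the weighted symmetry $s_A \delta_B(A) = s_B \delta_A(B)$ that conjugates $M$ into the reduced Laplacian and thereby links a purely combinatorial identity to the positive-definiteness of $L$ on an anti-synchronous subspace. A secondary point requiring care is confirming that every non-$A_0$ class is nonempty (an empty class must be $A_0$), so that $\mathbf 1$ genuinely fails to lie in $\DeltaAm$ and the connectivity argument applies.
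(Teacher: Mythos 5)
This statement is not proved in the paper at all: it is stated as a conjecture, and the authors only prove the weaker special case in which $\calAm$ is odd-balanced. Their argument for that case telescopes the edge-counting identity $|A|\,d_B(A)=|B|\,d_A(B)$ along a path from a cell of $A$ to a cell of $-A$, which requires the individual inter-class degrees $d_B(A)$ to be well-defined --- true for odd-balanced partitions but false for strictly linear-balanced ones, and that is exactly the obstruction to extending their proof. Your argument sidesteps this by using only the quantities $e$ and $\delta$ that \emph{are} well-defined in the linear-balanced case, and I have checked it: summing $e(i)$ over $A$ and over $-A$ and subtracting does give $e(A)\,n_A=\sum_{B\in\calAtilde\setminus\{A\}}\delta_A(B)\,n_B$ (the $B=A_0$ term vanishes because Condition (2) of Definition~\ref{linBalDef} applied with $i=j\in A_0$ forces $\delta_A(i)=0$ there, and $\delta_A(-B)=-\delta_A(B)$ handles the pairing); the alternating edge count $E(A,B)-E(A,-B)-E(-A,B)+E(-A,-B)$ evaluated by rows and by columns does yield $s_A\,\delta_B(A)=s_B\,\delta_A(B)$, valid for all distinct $A,B\in\calAtilde$ since then $B\notin\{A,-A\}$; the conjugation $M=S\widehat{L}S^{-1}$ then follows entrywise, with the diagonal entries $e(A)$ agreeing automatically; and $\mathbf 1\notin\DeltaAm$ whenever $\calAtilde\neq\emptyset$ because every class other than $A_0$ is nonempty (two empty classes would coincide as elements of the set $\calA$, and an empty class must be matched to itself, hence equal $A_0$), so $\ker\widehat{L}\cong\ker L\cap\DeltaAm=\{0\}$ by connectedness of $G$ and the $L$-invariance of $\DeltaAm$ from Remark~\ref{Linvariant}. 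So, modulo writing out these routine computations in full, you have a correct proof of the entire conjecture, not merely of the published odd-balanced case. What the Laplacian-kernel route buys is the global rigidity that the paper's local path-telescoping argument cannot supply once the individual degrees $d_B(A)$ fail to be constant on classes; the cost is that it is less elementary, resting on the spectral fact $\ker L=\spn\{\mathbf 1\}$ rather than pure counting. This is worth writing up carefully.
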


We have a proof of our conjecture for the weaker odd-balanced case, which we present after the following lemmas.

\begin{lem}
\label{OddSizes}
Assume $\calAm$ is an odd-balanced partition of the cells of a graph network, and $A, B \in \calA$ satisfy $A \ne B$ and 
$A \ne A_0 \ne B$.  Then
$$
|A| \; d_B(A) = |B| \; d_A(B).
$$
\end{lem}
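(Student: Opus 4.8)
The plan is to reuse the double-counting argument from Proposition~\ref{ABedges}, which established the identical formula in the exo-balanced setting. Both quantities $|A|\,d_B(A)$ and $|B|\,d_A(B)$ should count the number of edges running between a cell of $A$ and a cell of $B$, so the equality will follow once I know that the two degrees are genuinely well-defined under the stated hypotheses.

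First I would record that, by Proposition~\ref{oddBalWellDef}, the degree $d_A(i)$ is constant as $i$ ranges over a class $B$ precisely when $A \neq B \neq A_0$, and symmetrically $d_B(i)$ is constant over $i \in A$ precisely when $B \neq A \neq A_0$. The hypotheses $A \neq B$ and $A \neq A_0 \neq B$ supply exactly these conditions, so both $d_A(B)$ and $d_B(A)$ are defined. Note in particular that the case $B = -A$ is permitted and causes no trouble, since $-A \neq A$ and $-A \neq A_0$ whenever $A \neq A_0$.

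Next I would carry out the count. Summing the number of neighbors lying in $A$ over all cells of $B$ gives $\sum_{i \in B} d_A(i) = |B|\,d_A(B)$, and this sum tallies each edge with one endpoint in $B$ and one in $A$ exactly once; hence it equals the total number of edges between $A$ and $B$. Running the same argument with the roles of $A$ and $B$ interchanged gives $\sum_{i \in A} d_B(i) = |A|\,d_B(A)$, which counts the same set of edges. Equating the two totals yields the claim.

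The step requiring the most care is not the counting itself but the bookkeeping of which degrees are defined: unlike the exo-balanced setting, an odd-balanced partition leaves $d_A(A)$ and $d_A(A_0)$ undefined, so I must confirm that the excluded values $A_0$ and the diagonal case $A = B$ are exactly the ones ruled out by the hypotheses. Once that is verified, the proof is a verbatim adaptation of Proposition~\ref{ABedges} and involves no new computation.
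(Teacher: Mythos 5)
Your proof is correct and follows exactly the paper's route: the paper likewise disposes of the lemma by citing the double-counting argument of Proposition~\ref{ABedges}, noting only that the hypotheses guarantee $d_A(B)$ and $d_B(A)$ are defined and that $B=-A$ is permitted. Your additional bookkeeping about which degrees are well-defined is a correct elaboration of that one-line remark, not a different argument.
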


\begin{proof}
The proof is the same as that of Proposition~\ref{ABedges}. 
The hypotheses ensure that $d_A(B)$ and $d_B(A)$ are defined.
Note that $B = -A$ is allowed.
\end{proof}

\begin{lem}
\label{joinedToA0}
Assume $\calAm$ is an odd-balanced partition of the cells of a graph network, 
and $A \in \calA$ satisfies $A \neq A_0$ and $d_{A_0}(A) \ne 0$. Then $|A| = |-A|$.
\end{lem}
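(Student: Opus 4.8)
The plan is to run the edge-counting argument of Proposition~\ref{ABedges} and Lemma~\ref{OddSizes}, but now counting the edges joining $A$ to $A_0$, and then to exploit the symmetry between $A$ and $-A$ forced by the odd-balanced conditions. First I would record two preliminary facts. Since $A \neq A_0$, the degree $d_{A_0}(A)$ is well-defined by the discussion following Definition~\ref{odd-balancedDef}. Moreover $d_{A_0}(A) = d_{A_0}(-A)$: this is the identity $d_C(B) = d_{-C}(-B)$ of Proposition~\ref{oddBalWellDef} applied with $C = A_0$ (so $-C = A_0$) and $B = A$, which is legitimate because $A \neq A_0$.

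Next I would count the edges between $A$ and $A_0$ in two directions. Summing over $A$ gives $|A|\,d_{A_0}(A) = \sum_{c \in A_0} d_A(c)$, and the analogous count for $-A$ gives $|-A|\,d_{A_0}(-A) = \sum_{c \in A_0} d_{-A}(c)$. The crucial step is to compare the two sums indexed by $A_0$: Condition~(2) of Definition~\ref{odd-balancedDef} states precisely that $d_A(c) = d_{-A}(c)$ for every cell $c$ with $[c] = A_0 \neq A$, so the sums agree term by term and hence $\sum_{c \in A_0} d_A(c) = \sum_{c \in A_0} d_{-A}(c)$. Chaining the four equalities together with $d_{A_0}(A) = d_{A_0}(-A)$ yields $|A|\,d_{A_0}(A) = |-A|\,d_{A_0}(A)$, and cancelling the nonzero factor $d_{A_0}(A)$ gives $|A| = |-A|$.

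The one point requiring care --- and the reason this is not simply an instance of Lemma~\ref{OddSizes} --- is that $d_A(A_0)$ is \emph{not} defined, because $d_A(c)$ need not be constant as $c$ ranges over $A_0$ (this is exactly the phenomenon of Example~\ref{matchedPawEx}). Consequently I cannot replace $\sum_{c \in A_0} d_A(c)$ by $|A_0|\,d_A(A_0)$; the equality of the two $A_0$-sums must instead be extracted cell-by-cell from Condition~(2). This is the main (if mild) obstacle, and it is the precise place where the odd-balanced structure, rather than a single aggregate degree, does the work. The hypothesis $d_{A_0}(A) \neq 0$ enters only in the final cancellation; note it also silently forces $A_0 \neq \emptyset$, consistent with the statement being vacuous when $A_0 = \emptyset$.
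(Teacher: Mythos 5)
Your argument is correct and is essentially the paper's own proof: both establish $d_{A_0}(A) = d_{A_0}(-A)$ from the odd-balanced conditions, count the $A$--$A_0$ edges as $\sum_{c \in A_0} d_A(c) = |A|\,d_{A_0}(A)$, match that sum term by term with $\sum_{c \in A_0} d_{-A}(c)$ via Condition~(2), and divide by the nonzero degree. Your added remark about why one must sum over $A_0$ cell-by-cell (since $d_A(A_0)$ is undefined) is exactly the point the paper's computation implicitly relies on.
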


\begin{proof}
Condition (1) of Definition \ref{odd-balancedDef} implies that $d_{A_0}(A) = d_{A_0}(-A)$.
Condition (2) says that $d_A(i) = d_{-A}(i)$ for each $i \in A_0$.
The total number of edges between cells in $A_0$ and cells in $A$ is the sum of
$d_A(i)$ over $i \in A_0$. Dividing by the degree $d_{A_0}(A)$ we find
$$
|A| = \frac{1}{d_{A_0}(A) } \sum_{i \in A_0} d_A(i) = 
\frac{1}{d_{A_0}(-A) } \sum_{i \in A_0} d_{-A}(i) = |-A|.
$$
\end{proof}

\begin{prop}
If $\calAm$ is an odd-balanced partition of the cells of a graph network, then $|A| = |-A|$ for all $A\in\calA$.
\end{prop}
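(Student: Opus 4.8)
The plan is to combine the two preceding lemmas through a connectivity argument on the quotient graph of classes, using the symmetry $d_A(B) = d_{-A}(-B)$ from Proposition~\ref{oddBalWellDef}. The case $A = A_0$ is immediate since $-A_0 = A_0$, so I fix $A \neq A_0$. Because the only class that can be empty is $A_0$ (if $\emptyset \in \calA$ then $m(\emptyset) = \emptyset$ forces $\emptyset = A_0$), both $A$ and $-A$ are nonempty, so the ratio $r(A) := |A|/|{-A}|$ is well-defined and positive.

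First I would establish an \emph{edge relation}. Call two classes $A, B \in \calA \setminus \{A_0\}$ adjacent when $d_B(A) > 0$; by Lemma~\ref{OddSizes} this is symmetric and equivalent to $d_A(B) > 0$. Applying Lemma~\ref{OddSizes} to the pair $(A,B)$ and to the pair $(-A,-B)$, and using $d_{-A}(-B) = d_A(B)$ and $d_{-B}(-A) = d_B(A)$, gives $|A|\,d_B(A) = |B|\,d_A(B)$ together with $|{-A}|\,d_B(A) = |{-B}|\,d_A(B)$. Eliminating $d_A(B)$ and cancelling the common positive factor $d_B(A)$ yields $|A|\,|{-B}| = |{-A}|\,|B|$, that is $r(A) = r(B)$. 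Hence $r$ is constant on each connected component of the quotient graph $Q$ whose vertices are the classes in $\calA \setminus \{A_0\}$ and whose edges record adjacency.

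Next I would fix a component $\mathcal K$ of $Q$ and show that the constant value of $r$ on $\mathcal K$ equals $1$. If some class $A \in \mathcal K$ has $d_{A_0}(A) > 0$, then Lemma~\ref{joinedToA0} gives $|A| = |{-A}|$, so $r \equiv 1$ on $\mathcal K$. Otherwise no class of $\mathcal K$ is joined to $A_0$, so every neighbor of a cell in $\bigcup \mathcal K$ again lies in $\bigcup \mathcal K$; connectedness of $G$ then forces $\bigcup \mathcal K = \calC$, whence $A_0 = \emptyset$ and $\mathcal K = \calA \setminus \{A_0\}$. Since $A \mapsto -A = m(A)$ is an involution of $\calA$ fixing only $A_0$, this $\mathcal K$ is closed under the involution; picking any $A \in \mathcal K$, the constant value $c := r(A)$ also equals $r(-A)$, while $r(-A) = |{-A}|/|A| = 1/r(A)$, so $c = 1/c$ and $c = 1$. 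In every case $|A| = |{-A}|$.

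The main obstacle I anticipate is the case $A_0 = \emptyset$, where there is no cell of $A_0$ to serve as a bridge and Lemma~\ref{joinedToA0} does not apply; this is exactly where the involution-ratio argument is needed, and the delicate point is verifying, via the connectedness of $G$, that a component avoiding $A_0$ must exhaust all the cells and therefore be invariant under the matching. I would also be careful to invoke each degree only in its defined range ($A \neq B \neq A_0$), so that both Lemma~\ref{OddSizes} and the identity $d_A(B) = d_{-A}(-B)$ legitimately apply.
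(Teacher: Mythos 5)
Your proof is correct and follows essentially the same route as the paper's: both propagate the ratio $|A|/|{-A}|$ along chains of adjacent classes via Lemma~\ref{OddSizes} and the symmetry $d_A(B)=d_{-A}(-B)$, fall back on Lemma~\ref{joinedToA0} when $A_0$ is reached, and otherwise close with the involution identity $r(A)=r(-A)=1/r(A)$. The only difference is organizational --- you work with connected components of the quotient graph of classes, while the paper traces an explicit path in $G$ from a cell of $A$ to a cell of $-A$.
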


\begin{proof}
Let $i\in A$ and $i'\in-A$. Since the graph network is connected, we can
find a path from $i$ to $i'$. Let $A_{1},A_{2},\ldots,A_{n}$ be
the sequence of equivalence classes containing the cells of this
path, so that $i\in A_{1}$, $i'\in A_{n},$ and $A_{j}\ne A_{j+1}$
for all $j$. Note that $A_{j}$ can contain several consecutive terms
of the path. Then $d_{A_{j}}(A_{j+1})\ne0$ for all $j$. 

Case 1: Assume $A_{k}\ne A_{0}$ for all $k$. Using the Lemma~\ref{OddSizes} we
have 
\[
|A|=|A_{1}|=\frac{d_{A_{1}}(A_{2})}{d_{A_{2}}(A_{1})}|A_{2}|=\cdots=\frac{d_{A_{1}}(A_{2})}{d_{A_{2}}(A_{1})}\cdot\frac{d_{A_{2}}(A_{3})}{d_{A_{3}}(A_{2})}\cdots\frac{d_{A_{n-1}}(A_{n})}{d_{A_{n}}(A_{n-1})}|A_{n}|.
\]
We also have 
\[
|-A|=|-A_{1}|=\frac{d_{-A_{1}}(-A_{2})}{d_{-A_{2}}(-A_{1})}|-A_{2}|=\cdots=\frac{d_{-A_{1}}(-A_{2})}{d_{-A_{2}}(-A_{2})}\cdot\cdots\frac{d_{-A_{n-1}}(-A_{n})}{d_{-A_{n}}(-A_{n-1})}|-A_{n}|.
\]
Since $d_{A_{j}}(A_{j+1})=d_{-A_{j}}(-A_{j+1})$ for all $j$, we
have 
\[
\frac{|A|}{|-A|}=\frac{|A|}{|A_{n}|}=\frac{|-A|}{|-A_{n}|}=\frac{|-A|}{|A|}.
\]
Hence $|A|^{2}=|-A|^{2}$ which implies $|A|=|-A|$.

Case 2: Assume $A_{k+1}=A_{0}$ for some $k$. We can also assume
that $A_{j}\ne A_{0}$ for $j\le k$. Now we have
\[
|A|=|A_{1}|=\frac{d_{A_{1}}(A_{2})}{d_{A_{2}}(A_{1})}|A_{2}|=\cdots=\frac{d_{A_{1}}(A_{2})}{d_{A_{2}}(A_{1})}\cdot\frac{d_{A_{2}}(A_{3})}{d_{A_{3}}(A_{2})}\cdots\frac{d_{A_{k-1}}(A_{k})}{d_{A_{k}}(A_{k-1})}|A_{k}|.
\]
We also have 
\[
|-A|=|-A_{1}|=\frac{d_{-A_{1}}(-A_{2})}{d_{-A_{2}}(-A_{1})}|-A_{2}|=\cdots=\frac{d_{-A_{1}}(-A_{2})}{d_{-A_{2}}(-A_{2})}\cdot\cdots\frac{d_{-A_{k-1}}(-A_{k})}{d_{-A_{k}}(-A_{k-1})}|-A_{k}|.
\]
Since $d_{A_{j}}(A_{j+1})=d_{-A_{j}}(-A_{j+1})$ for all $j$, we
have

\[
\frac{|A|}{|A_{k}|}=\frac{|-A|}{|-A_{k}|}.
\]
By Lemma~\ref{joinedToA0}, $|A_{k}|=|-A_{k}|$. Thus $|A|=|-A|$.
\end{proof}

\section{The Lattice of Invariant Subspaces}

For a given graph network system $(G, V)$, the $\Dl$-invariant subspaces of the four types (balanced, exo-balanced, odd-balanced and linear-balanced) ordered by reversed inclusion
form a partially ordered set (poset). The join of a set $S$ of $\Dl$-invariant subspaces is $\bigvee(S)=\bigcap S$. The maximum and minimum elements are $\{(0, 0, 0, \ldots ) \}$ and $\{(a, b, c, \ldots )\}$ respectively.
Hence this poset is actually a complete lattice by \cite[Theorem 2.31]{Davey}.
The lattice of balanced subspaces on more general cell networks is studied in \cite{StewartLattice}.

We visualize this lattice by drawing the Hasse diagram of the \emph{orbit quotient poset}. 
The elements of this quotient poset are the orbits of the $\aut(G)$ action on the set of invariant subspaces. The partial order is defined by $[U]\le [W]$ if there is a $\phi\in \aut(G)$ such that $\phi\cdot W\subseteq U$.
Note that the quotient poset might not be a lattice. For example, our computer calculations show that the 37-element orbit quotient poset for the 8-vertex cube graph is not a lattice.
Before identification there are 142 $\Dl$-invariant subspaces that 
form a lattice, as required by the general theory.

This calculation was done using a brute-force algorithm that we implemented in C++. 
The algorithm computes the lattice for networks with fewer than about 15 
cells in a reasonable amount of time. The algorithm in \cite{KameiLattice} is similar to ours, except it uses a matrix description of the balanced condition. 
Our algorithm simply generates all partitions and matched partitions and checks the appropriate conditions.
A more efficient algorithm, using eigenvectors of the Laplacian matrix, could possibly be constructed using Remarks~\ref{LapExo-balanced} and \ref{Linvariant}, along the lines of \cite{Aguiar&Dias}.

A computation of the lattice of $\Dl$-invariant subspaces for a cell network $G$ is the first step toward
understanding the bifurcations that occur in systems of differential
equations with vector fields in $\Dl$.  These bifurcations are currently only partially understood.

Several examples of lattices of $\Dl$-invariant subspaces are shown in Figures~ \ref{tadpoleLattice},
\ref{nosym4and7Hasse},
 \ref{P6Hasse}, and \ref{P3C3Hasse}. 
In these figures, 
the matched subspaces $\DeltaAm$ are shown with a double border, and the un-matched subspaces $\DeltaA$ have a single border. 
Strictly exo-balanced subspaces
and strictly linear-balanced subspaces are shown with a shaded background, and the others have a white background.

In our Hasse diagram figures, subspaces at the same height have the same dimension.
The solid arrows connect subspaces with a different point stabilizer within $\aut(G) \times \Z_2$, and the
dashed arrows connect subspaces with the same stabilizer.  Hence, the
invariant subspace is a fixed point subspace precisely when there are no dashed arrows leaving
the subspace.
Thus, dashed arrows are never symmetry-breaking bifurcations described by standard equivariant bifurcation theory.
That theory describes solid arrows connecting two fixed-point subspaces.  
However, in many cases there is a standard symmetry-breaking bifurcation
within the reduced system for the invariant subspace of a daughter branch.

\begin{figure}[h]
\includegraphics[scale = 1.2]{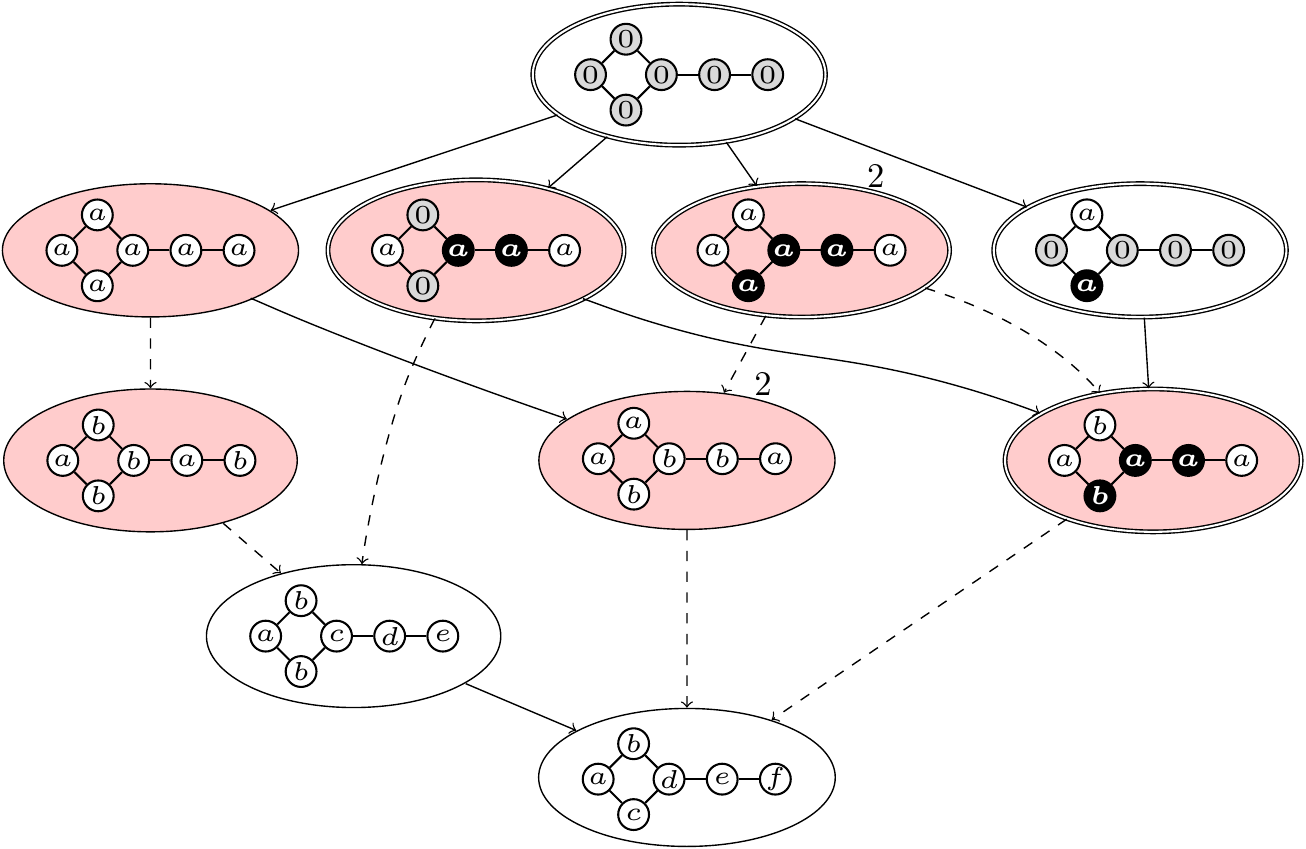}
\caption{
The lattice of $\Dl$-invariant subspaces on a 6-cell graph network. Each $\aut(G)$ orbit is shown with a representative.
The numeral ``2'' indicates that the $\aut(G)$ orbit of the subspace has 2 elements.
The balanced subspaces have a single border and white background.  The exo-balanced are single border, shaded.  The odd balanced are double border, white,
and the linear-balanced are double border, shaded. 
}
\label{tadpoleLattice}
\end{figure}


\begin{example}
The lattice of $\Dl$-invariant subspaces for a certain network with all four types of invariant subspaces is shown in Figure~\ref{tadpoleLattice}. 
This figure can be used to obtain the lattice for any of the classes of vector fields.  
For this network, the three restricted cases are as follows:
The lattice of $\D$-invariant subspaces is the sublattice composed of the two balanced subspaces,
denoted by the single border and white background.
The  lattice of $\Dzero$-invariant subspaces
is the sub-lattice composed of the 5 exo-balanced subspaces with a single border.
The lattice of $\Dodd$-invariant subspaces is the sub-lattice with the 7 subspaces excluding the 3 strictly linear-balanced subspaces,
indicated by shading and double-borders. 
\end{example}

\begin{figure}[h]
\begin{tabular}{ccc}
\includegraphics[scale=1.2]{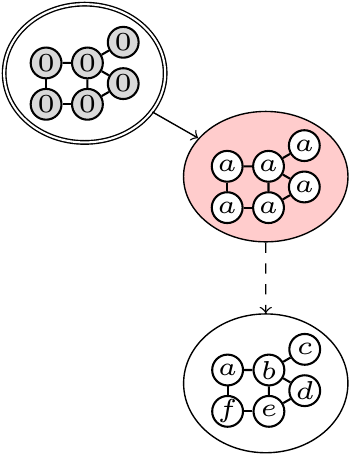}
   & $\qquad\qquad$ &
\includegraphics[scale=1.2]{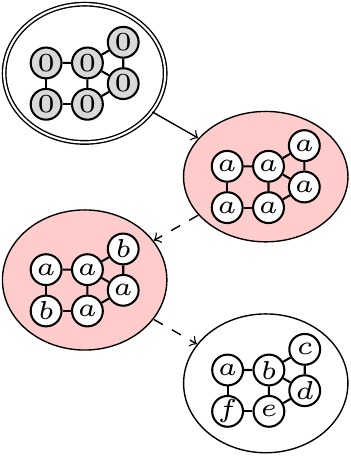}
  \\
(i) & & (ii) 
\end{tabular}
\caption{The lattice of $\Dl$-invariant subspaces for two different coupled cell networks, each with trivial $\aut(G)$. 
}\label{nosym4and7Hasse}
\end{figure}


\begin{example}
For every network $G$ with more than 1 cell, the lattice of $\Dl$-invariant subspaces has at least three subspaces: 
 $\{(0, 0, 0, \ldots) \}$,
 $\{(a, a, a, \ldots)\}$, and
 $\{(a, b, c, \ldots)\}$.  
If $\aut(G)$ is nontrivial, then there will certainly be more invariant subspaces. 
On the other hand, if $\aut(G)$ is trivial, then there may or may not be more invariant subspaces.
Figure~\ref{nosym4and7Hasse} shows two examples of networks with trivial $\aut(G)$.  Figure~\ref{nosym4and7Hasse}(i) shows
a lattice with the minimum number of $\Dl$-invariant subspaces, 
and Figure~\ref{nosym4and7Hasse}(ii) shows the lattice for a different network, with one more edge.  
This second lattice has one extra exo-balanced subspace.
\end{example}


\begin{figure}
\includegraphics[scale=1.2]{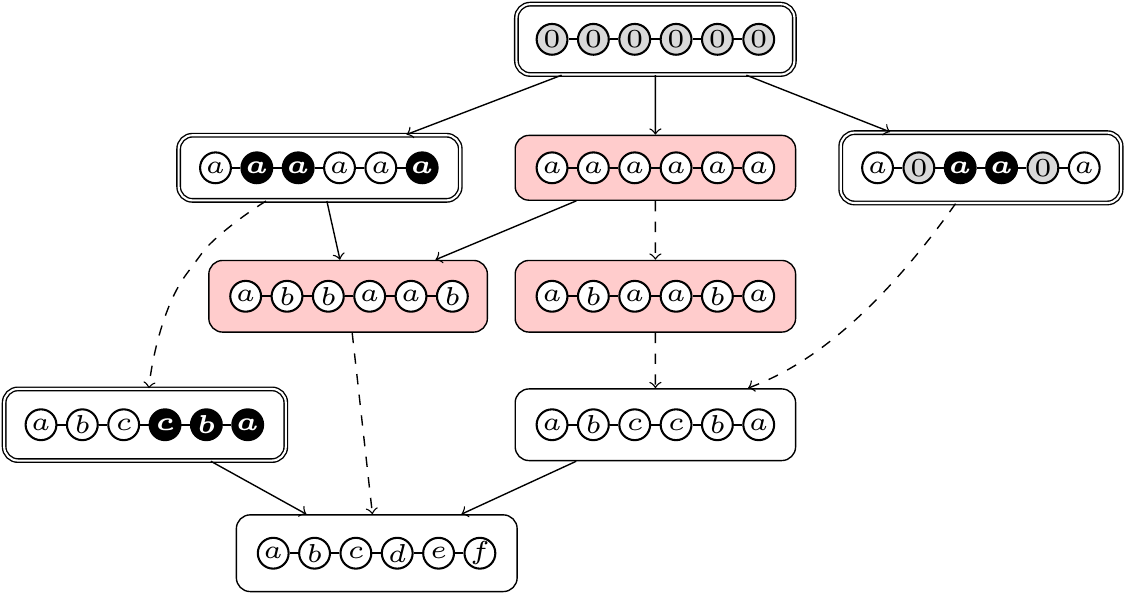}
\caption{
The lattice of $\Dl$-invariant subspaces for $G = P_6$. 
}
\label{P6Hasse}
\end{figure}

\begin{example}
The network of $n$ coupled cells in a path $G=P_n$ can be analyzed fairly completely.  See for example  \cite{EpsteinGolubitsky}. 
The lattice of $\Dl$-invariant subspaces for $n = 6$ is shown in Figure~\ref{P6Hasse}.  
This lattice was found with an exhaustive search of all partitions. This example suggests the following algorithm to construct all $\Dl$-invariant subspaces for $G$.


We start with three types of basic partitions of the cells of $P_q$. The generic basic partitions are   
\[
\mathcal{G}_1:=(a),\ \mathcal{G}_2:=(a,b),\ \mathcal{G}_3:=(a,b,c),\ \ldots
\] 
The even basic partitions are defined for odd $q$ satisfying $q\geq 3$. They are 
\[
\mathcal{E}_3:=(a,b,a),\ \mathcal{E}_5:=(a,b,c,b,a),\ \mathcal{E}_7:=(a,b,c,d,c,b,a),\ \ldots
\]  
The odd basic partitions are the matched partitions
\[
\mathcal{O}_1:=(0),\ \mathcal{O}_2:=(a,-a),\ \mathcal{O}_3:=(a,0,-a),\ \mathcal{O}_4:=(a,b,-b,-a),\ \ldots
\]

Stringing together $k$ copies of a basic partition $\mathcal{S}$, and the reverse $\mathcal{S}^r$ of this basic partition, we get a partition of the cells of $P_{kr}$ of the form $\mathcal{A}=\mathcal{S}\mathcal{S}^r \mathcal{S} \cdots \mathcal{S}^r  \mathcal{S}$ for $k$ odd, and $\mathcal{A}=\mathcal{S} \mathcal{S}^r \mathcal{S} \cdots  \mathcal{S} \mathcal{S}^r$ for $k$ even. It is easy to see 
that every basic partition produces different partitions of the cells of $P_n$. For a given $n$, the partitions of the cells of $P_n$ are obtained by using all of the factorizations $n = q k$. The resulting partition $\mathcal{A}$ satisfies the following:
\begin{enumerate}
\item
If $\mathcal{S} = \mathcal{G}_q$, then $\mathcal{A}$ is balanced for $k = 1$ or $2$, and strictly exo-balanced for $k > 2$.
\item
If $S =\mathcal{E}_q$, then $\mathcal{A}$ is balanced for $k = 1$, and strictly exo-balanced for $k > 1$. 
\item
If $S = \mathcal{O}_q$, then $\mathcal{A}$ is odd-balanced.
\end{enumerate}

The partitions of Figure~\ref{P6Hasse} are all found by our algorithm. The list below shows the basic partitions together 
with their corresponding partitions of the cells of $P_6$: 
\vspace{-4mm}
\begin{multicols}{3}
$\mathcal{G}_1$: $(a,a,a,a,a,a)$

$\mathcal{O}_1$: $(0,0,0,0,0)$

$\mathcal{G}_2$: $(a,b,b,a,a,b)$

$\mathcal{O}_2$: $(a, -a, -a, a, a, -a)$

$\mathcal{G}_3$: $(a,b,c,c,b,a)$ 

$\mathcal{E}_3$: $(a, b, a, a, b, a)$ 

$\mathcal{O}_3$: $(a, 0, -a, -a, 0, a)$ 

$\mathcal{G}_6$: $(a, b,c, d, e, f)$

$\mathcal{O}_6$: $(a, b, c, -c, -b, -a)$
\end{multicols}
\end{example}

\begin{conj}
\label{pathConj}
We conjecture that the algorithm in the previous example gives all of the $\Dl$-invariant subspaces for the path $G=P_n$.
\end{conj}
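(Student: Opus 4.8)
The plan is to turn the conjecture into a combinatorial statement about the one-dimensional structure of $P_n$ and then exploit the tridiagonal form of the Laplacian. By Theorem~\ref{MainTheorem}(1) together with Proposition~\ref{onlyIfOdd}, every $\Dl$-invariant subspace of $V^n$ is a polydiagonal subspace of the form $\DeltaA$ or $\DeltaAm$, and by Remarks~\ref{LapExo-balanced} and \ref{Linvariant} such a subspace is $\Dl$-invariant exactly when it is invariant under the graph Laplacian $L$. So it suffices to enumerate the $L$-invariant polydiagonal subspaces of $P_n$ and to match them against the algorithm. I would first dispatch the \emph{soundness} half: that every partition or matched partition obtained by stringing together copies of $\mathcal{G}_q$, $\mathcal{E}_q$, or $\mathcal{O}_q$ is exo-balanced or linear-balanced, with the balanced/exo/odd refinements recorded in the three bulleted cases. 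This is a direct computation of the intra- and inter-class degrees from Definitions~\ref{exoDef} and \ref{odd-balancedDef}, the only mild subtlety being the bookkeeping that distinguishes $k\le 2$ from $k>2$ for the generic blocks.

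For the \emph{completeness} half, fix a generic $x\in W$ and record its symbol sequence $s_1,\dots,s_n$, where each $s_i$ lies in the finite alphabet $\Sigma=\{0\}\cup\{\pm a,\pm b,\dots\}$ of free parameters; genericity guarantees that the symbol of each cell is well defined and that equalities among the $s_i$ detect exactly the defining relations of $W$. Introduce the Neumann reflection ghosts $s_0:=s_1$ and $s_{n+1}:=s_n$, so that $(Lx)_i=2s_i-s_{i-1}-s_{i+1}$ for every $1\le i\le n$. Because $Lx\in W$, the neighbor sum $s_{i-1}+s_{i+1}$ depends only on the signed class of $s_i$; calling this common value $\Phi(s_i)$, one checks that $\Phi$ is odd (the matched relation $s_i=-s_j$ forces $\Phi(s_i)=-\Phi(s_j)$, and $s_i=0$ forces $\Phi(s_i)=0$). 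This yields the reversible second-order recurrence $s_{i+1}=\Phi(s_i)-s_{i-1}$.

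The recurrence is the mechanism that globalizes the local constraint. Feeding the left ghost identity $s_0=s_1$ into it and inducting shows $s_{1-t}=s_t$ for all $t$, and likewise the right ghost gives $s_{n+t}=s_{n+1-t}$; thus the bi-infinite extension of the sequence is symmetric under reflection about $\tfrac12$ and about $n+\tfrac12$. These two reflections generate an infinite dihedral group whose translation subgroup is generated by a shift of $2n$, so the extended sequence is periodic. Its fundamental domain is a block $\mathcal{S}$, the values on $\{1,\dots,n\}$ form the alternating string $\mathcal{S}\mathcal{S}^r\mathcal{S}\cdots$, and the demand that $n+\tfrac12$ be a reflection axis forces $n=qk$ with $q=|\mathcal{S}|$. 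Reading off the admissible shapes of $\mathcal{S}$---all symbols distinct (the generic family $\mathcal{G}_q$), a palindromic block arising when a reflection axis passes through a cell (the even family $\mathcal{E}_q$), or a block carrying the $0$ and $\pm$ symbols of a matched partition (the odd family $\mathcal{O}_q$)---is intended to recover precisely the three basic partitions.

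The hard part, and the reason the statement is only a conjecture, is this final classification of the fundamental block. One must prove that $\mathcal{S}$ can only be one of the three listed shapes and cannot be a ``mixed'' block fusing two families, while simultaneously keeping track of where the reflection axes fall (between cells versus through a cell), of the fact that a palindromic block has a shorter intrinsic period, and of the compatibility of the sign and zero symbols with the matching function $m$ of Definition~\ref{matched}. The sign bookkeeping in the matched case, and recovering the fine distinctions (balanced versus strictly exo-balanced, odd-balanced versus strictly linear-balanced) at the level of the block rather than the whole path, is where a uniform argument is still missing.
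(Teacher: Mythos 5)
The statement you are addressing is stated in the paper only as Conjecture~\ref{pathConj}; the authors give no proof, so there is nothing of theirs to compare your argument against. Your reduction is sound as far as it goes: by Theorem~\ref{MainTheorem}(1) and Proposition~\ref{onlyIfOdd} every $\Dl$-invariant subspace is a polydiagonal $\DeltaA$ or $\DeltaAm$, and Remarks~\ref{LapExo-balanced} and \ref{Linvariant} convert $\Dl$-invariance of a polydiagonal into $L$-invariance; the Neumann ghosts $s_0=s_1$, $s_{n+1}=s_n$ do give $(Lx)_i=2s_i-s_{i-1}-s_{i+1}$ on all of $P_n$; the well-definedness and oddness of $\Phi$ follow from $L$-invariance exactly as you say; and the two boundary reflections do force the bi-infinite extension to be $2n$-periodic with the dihedral symmetry you describe. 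This is a genuinely promising framework that the paper does not supply.

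But the proposal is not a proof, and you say so yourself: the classification of the fundamental block $\mathcal{S}$ into exactly the three families $\mathcal{G}_q$, $\mathcal{E}_q$, $\mathcal{O}_q$ is asserted, not established, and that classification is the entire content of the conjecture. The obstruction is visible inside your own setup. To exclude a ``mixed'' block you would want a coincidence of symbols $s_i=s_j$ (or $s_i=-s_j$) to propagate through the recurrence into an extra symmetry of the sequence, shrinking the fundamental domain. The recurrence only yields the equality of \emph{values} $s_{i-1}+s_{i+1}=s_{j-1}+s_{j+1}$ (resp.\ the negative), and genericity of the free parameters does not upgrade this to equality of symbol multisets, precisely because $a+(-a)=b+(-b)=0+0$ for distinct generic parameters $a,b$: the degenerate sums occur exactly in the matched case governed by Definition~\ref{matched}, which is where your sketch stalls. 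Until that case analysis is carried out --- together with the soundness computation distinguishing $k\le 2$ from $k>2$, which you also leave as ``a direct computation'' --- the statement remains a conjecture, as the paper labels it.
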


\begin{figure}
\includegraphics[scale=1.2]{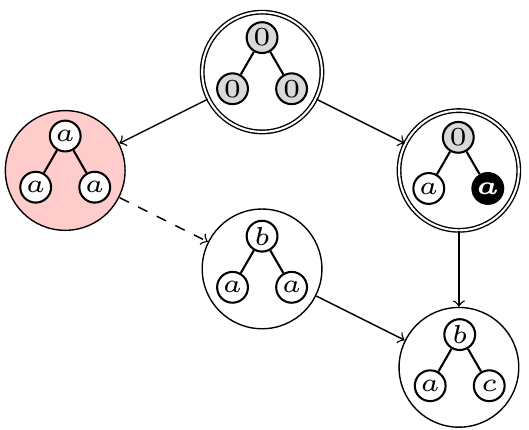}
\quad
\includegraphics[scale=1.2]{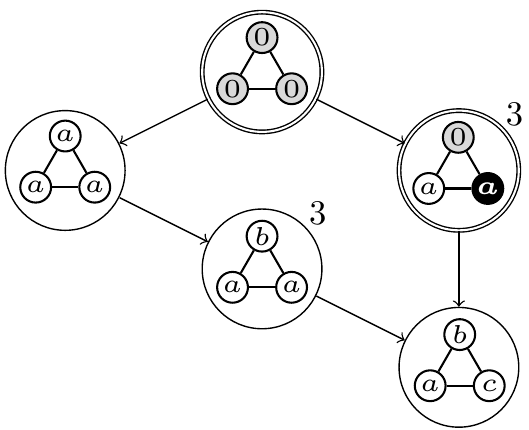}
\caption{The lattice of $\Dl$-invariant subspaces for the networks $P_3$ (the path) and $C_3$ (the cycle).
The numeral ``3'' indicates that the $\aut(G)$ orbit of the subspace has 3 elements.
}
\label{P3C3Hasse}
\end{figure}

\begin{example}
Figure~\ref{P3C3Hasse} shows the lattice of invariant subspaces for the two connected networks with 3 cells. 
Note that all $\Dl$-invariant subspaces are fixed point subspaces for $C_3$. 
On the other hand, $P_3$ has a strictly exo-balanced partition (shaded on the figure) that is not a fixed point subspace. 
We conjecture that all $\Dl$-invariant subspaces are fixed point subspaces for every complete graph $G$.
\end{example}

\section{Application to Coupled van der Pol Oscillators}

This section gives several examples of coupled generalized van der Pol oscillators. We show how various choices of the system parameters
allow the vector field to be in $\D$, $\Dzero$, $\Dodd$, or $\Dl$.

Consider the difference-coupled vector field on a graph network system $(G,\mathbb{R}^2)$, where each cell is a van der Pol oscillator.  
As in Section \ref{springsEx} we use $x_i = (u_i, \dot u_i) \in \R^2$ to describe the state of each oscillator.
The simplest such system is
\begin{equation}
\label{vdPlc}
\ddot u_i = \alpha(1- u_i^2) \dot u_i - u_i + \sum_{j \in N(i)} \delta(u_j - u_i),
\end{equation}
for each $i \in \{i, 2, \ldots, n\}$, where $\alpha$ and $\delta$ are real parameters.
Oscillator $i$ is coupled to its neighbors $N(i)$ in the graph network.

To illustrate the special nature of System~(\ref{vdPlc}), we will study a 
more general system of coupled van der Pol oscillators where the equations of motion are
\begin{equation}
\label{vdP}
\ddot u_i = \alpha(1- u_i^2) \dot u_i - u_i + \beta u_i^2 +
\sum_{j \in N(i)} \big ( \gamma + \delta (u_j - u_i) + \eps (u_j - u_i)^3 \big ).
\end{equation}
This dynamical system has 5 real parameters:
$\alpha, \beta,\gamma, \delta$, and $\eps$.
Small $\alpha$ gives near-circular limit cycles of the oscillators, whereas large $\alpha$ causes
a relaxation oscillation.  
Nonzero $\beta$ makes the internal dynamics non-odd;
the classic van der Pol oscillator has $\beta = 0$.
The $\gamma$ term can describe massive coupling springs, as in Example~\ref{springsEx}.
Positive coupling constants $\delta,\eps$ pull neighboring oscillators toward the same state,
and tend to synchronize the oscillators,
whereas negative coupling constants
push the oscillators away from each other.

System (\ref{vdP}) can be written as a graph network dynamical system $\dot x = f(x)$ with phase space $ (\R^2)^n$ by defining
$g(u, v) = (v, \alpha (1-u^2)v  -u + \beta u^2)$ and
$h(u, v)  = (0, \gamma + \delta u + \eps u^3)$. 
Note that
\begin{itemize}
\item $f \in \D$;
\item $f \in \Dzero$ if and only if $\gamma = 0$;
\item $f \in \Dodd$ if and only if $\gamma = \beta = 0$;
\item $f \in \Dl$ if and only if $\gamma =\beta = \eps = 0$.
\end{itemize}
This section illustrates how the simplest models often have special properties.
For example, System~(\ref{vdPlc}) is often used as a model in a situation when System~(\ref{vdP}) would be more appropriate
because strictly linearly balanced subspaces are not invariant for the physical system.
\begin{figure}[ht]
\begin{tabular}{cc}
\includegraphics[scale = 1]{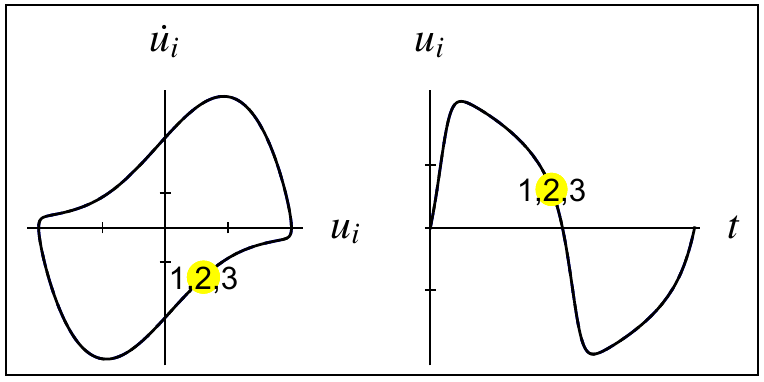} 
&
\includegraphics[scale = 1]{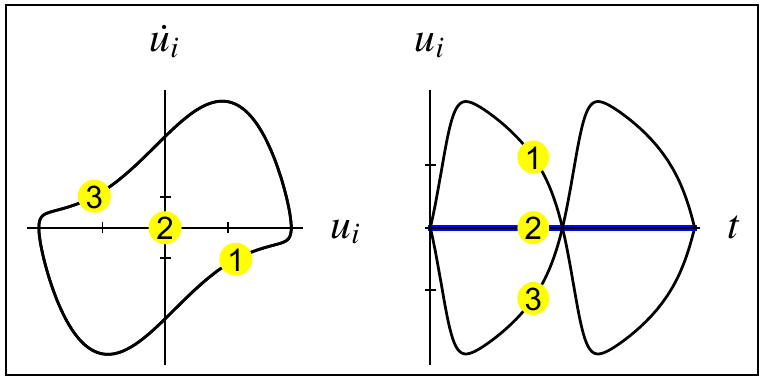}
  \\
(i) &  (ii) 
\end{tabular}
\caption{
Two solutions of the coupled van der Pol System (\ref{vdP})
on the path $P_3$, with $f \in \Dl$. 
See Example~\ref{vdP3} for the parameters.
The numbers in the yellow dots indicate $i$.
The tick marks are at $u_i = \pm 1$ and $\dot u_i = \pm 1$.
The solution in Figure (i) has $u_1 = u_2 = u_3$.  In Figure (ii), $u_1 + u_3 = u_2 = 0$.
}
\label{P3Dl}
\end{figure}
\begin{figure}[ht]
\begin{tabular}{cc}
\includegraphics[scale = 1]{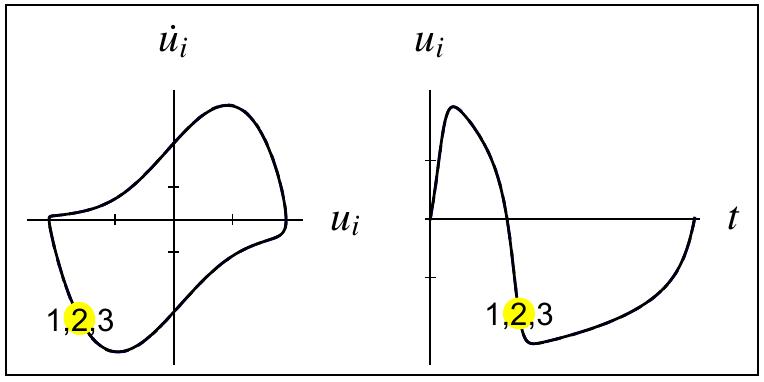}
&
\includegraphics[scale = 1]{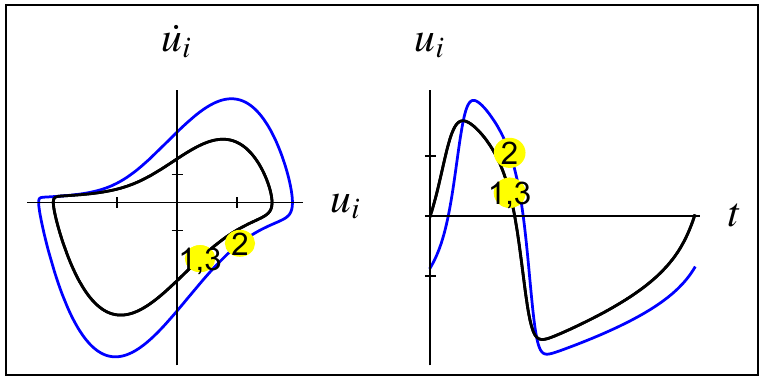} \\
(i) & (ii)
\end{tabular}
\caption{Two solutions of the coupled van der Pol System (\ref{vdP})
on the path $P_3$.  The solution in Figure~(i) has $f\in\Dzero\setminus\Dodd$, and
the solution has $u_1 = u_2 = u_3$.
In Figure~(ii), $f \in \D \setminus \Dzero$, and
the solution has $u_1 = u_3$.
See Example~\ref{vdP3}.
}
\label{P3D}
\end{figure}

\begin{example}
\label{vdP3}
Consider the system of 3 coupled van der Pol oscillators based on the graph network  $G = P_3$.
The lattice of invariant subspaces shown on the left in Figure~\ref{P3C3Hasse} can be used to understand and classify the observed solutions,
but not to predict which patterns of oscillation are stable.
Figure~\ref{P3Dl} shows two solutions to System~(\ref{vdP}) with $f \in \Dl$ (with nonzero parameters $\alpha = 2$ and $\delta = 1$).  
Figure~\ref{P3Dl}(i) shows an attracting solution in the exo-balanced subspace $\{ (a,a,a) \mid a \in \R^2\}$.  
Letting $a = (u, v)$,
the reduced System~(\ref{reducedExoSys}) is the single van der Pol equation 
$$
\ddot u = 2 (1-u^2) \dot u -u
$$
and there is no coupling between the oscillators.
Figure~\ref{P3Dl}(ii) shows a repelling solution in the linear-balanced subspace $\{ (a,0,-a) \mid a \in \R^2\}$.  The reduced system 
$$
\ddot u = 2 (1-u^2) \dot u - 2 u
$$
is slightly different from the previous case, and again there is effectively a single oscillator.

Figure~\ref{P3D} shows two attracting solutions to System~(\ref{vdP}) that are described by the reduced systems in Example~\ref{abExosystems}.
Figure~\ref{P3D}(i) has $f \in \Dzero \setminus  \Dodd$, with nonzero parameters $\alpha = 2$, $\beta =-0.3$ and $\delta = 1$.  The  solution is in the same exo-balanced subspace as the solution in Figure~\ref{P3Dl}(i), but now the reduced system is 
$$
\ddot u = 2 (1-u^2) \dot u -u - 0.3 u^2 .
$$

Figure~\ref{P3D}(ii) has $f \in \D\setminus\Dzero$, with nonzero parameters $\alpha = 2$, $\gamma = 0.5$, and $\delta = -0.5$. 
The solution is in the  balanced subspace $\{(a, b, a) \mid a, b \in \R^2\}$.
Letting $a = (u_1, v_1)$ and $b = (u_2, v_2)$, the reduced system is
\begin{align*} 
\ddot u_1 &= 2 (1-u_1^2) \dot u_1 - u_1 - 0.5 + 0.5(u_2-u_1)  \\
\ddot u_2 &= 2 (1-u_2^2) \dot u_2 - u_2 - 1 + (u_1 - u_2) .
\end{align*}
For this graph network system system $(P_3, \R^2)$, the only $\D$-invariant subspaces are the aforementioned balanced subspace with $x_1 = x_3$ and the full phase space $\R^6$.
Thus, Figure~\ref{P3D}(ii) shows the only nontrivial symmetry of solutions expected when $\gamma \neq 0$.
\end{example}

\begin{figure}
\begin{tabular}{ccc}
\includegraphics[scale = 1]{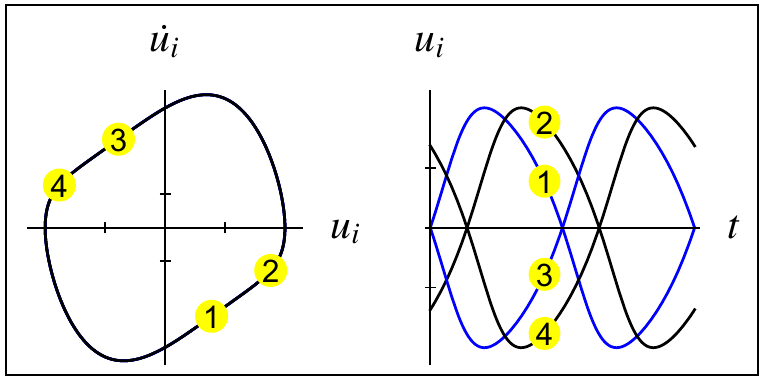}
   & 
\includegraphics[scale = 1]{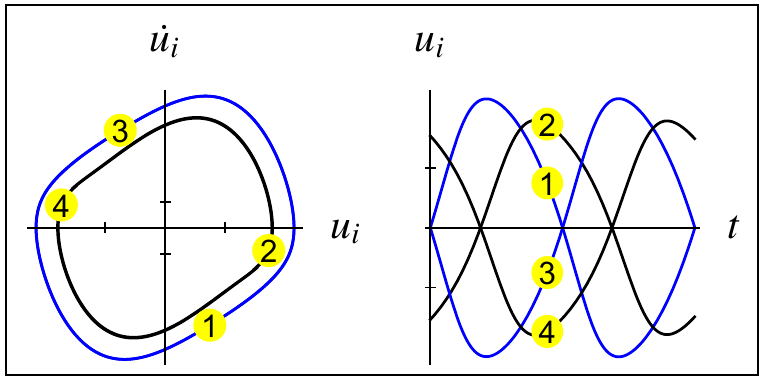}
  \\
(i) &  (ii) 
\end{tabular}
\caption{Two solutions of the coupled van der Pol system~(\ref{vdP}) on the square graph $C_4$. 
The two solutions are in the same odd-balanced subspace,
with $u_1 + u_3 = u_2+ u_4 = 0$. In (i) $f\in\Dl$ while in (ii) $f\in\Dodd\setminus\Dl$.
The solution shown in (i) has two decoupled anti-synchronized pairs of oscillators with an arbitrary phase shift between the pairs.
See Example~\ref{C4Ex}.
}
\label{C4ab-a-bFig}
\end{figure}
\begin{example}
\label{C4Ex}
There are 16 $\Dl$-invariant subspaces for the graph network $G = C_4$. 
Every invariant subspace is a fixed point subspace,
thus the lattice of invariant subspaces is the lattice of fixed point subspaces.
While the lattice of invariant subspaces does not include anything new, the reduced systems are interesting.
For example, the dynamics on the odd-balanced subspace $\{(a, b, -a, -b) \mid a, b \in V \}$ is quite different for $f \in \Dl$ and for $f \in \Dodd \setminus \Dl$.  
A solution of System~(\ref{vdP}) in this subspace with $f \in \Dl$ is shown in Figure~\ref{C4ab-a-bFig}(i).
The nonzero parameters are $\alpha = 1$ and $\delta = 1$.
The reduced system is two copies of an identical, uncoupled oscillator
\begin{align*} 
\ddot u_1 &=  (1-u_1^2) \dot u_1 - 3 u_1   \\
\ddot u_2 &=  (1-u_2^2) \dot u_2 - 3 u_2 .
\end{align*}
Since $u_1$ and $u_2$ are decoupled in the reduced equations, and each equation has an attracting limit cycle,
the four trajectories in the phase space, $\dot u_i$ vs $u_i$, all lie on top of each other.
The period of each oscillator is identical and the solution has an arbitrary phase shift between the two anti-synchronized pairs.
This behavior was described by Alexander and Auchmuty in \cite{Alexander&Auchmuty}.

A solution in this same subspace with $f \in \Dodd \setminus \Dl$ is shown in Figure~\ref{C4ab-a-bFig}(ii)
The nonzero parameters are $\alpha = 1$, $\delta = 1$ and $\eps = 0.1$.
The reduced system is now
\begin{align*} 
\ddot u_1 &= (1-u_1^2) \dot u_1 - 3 u_1 + 0.1( (u_2-u_1)^3 +(-u_2-u_1)^3) \\
\ddot u_2 &= (1-u_2^2) \dot u_2 - 3 u_2 + 0.1( (u_1-u_2)^3 + (-u_1-u_2)^3 ).
\end{align*}
Now the two equations are coupled, and the solution shown in Figure~\ref{C4ab-a-bFig}(ii) is not an attractor.  The solution is evolving
toward an attractor in the subspace $\{ (a, a, -a, -a) \mid a \in \R^2\}$.
\end{example}

\section{Conclusion}



Our initial experience
as a collaborative group concerned solutions and numerical approximations of solutions to
semilinear elliptic PDE and PdE (partial difference equations), e.g., Example~\ref{semilinear}, \cite{NSS3, NSS5}.
In such works we not only observe invariant subspaces but use them to make our Newton's method-based algorithms more robust and efficient.
For many domains and nonlinearities,
the invariant subspaces are essentially all fixed point subspaces, which arise from symmetry.
By analyzing the symmetries of eigenfunctions of the linear elliptic part of the operator,
we are able to build bifurcation digraphs (labeled lattices of isotropy subgroups).
The digraphs have proven to be an efficient and effective tool for finding and interpreting many solutions to many of our types of
nonlinear problems.

Missing from this understanding was any theory explaining
what we called anomalous invariant subspaces (AIS),
invariant subspaces which are not fixed point subspaces.
For some graphs in \cite{NSS3}, in particular Sierpinski pre-gaskets,
the number of AIS can in fact dominate the number of fixed point subspaces,
in which cases our algorithms as currently implemented again have difficulties with robustness and efficiency.
The current work is a first step toward understanding bifurcations 
from one invariant subspace to another in cases that are not standard symmetry-breaking bifurcations.

\bibliographystyle{plain}
\bibliography{nss6}

\end{document}